\documentclass[11pt, oneside]{amsart}   	
\usepackage{geometry}                		
\geometry{letterpaper}                   		
\usepackage{graphicx}				
\usepackage{amsaddr}								
\usepackage{amssymb}
\usepackage{color}
\usepackage{amsmath}
\usepackage{amsthm}
\usepackage{appendix}
\usepackage{esint}

\allowdisplaybreaks
\numberwithin{equation}{section}

\newcommand{\R}{\mathbb{R}}

\renewcommand{\S}{\mathbb{S}}

\newcommand{\Ric}{\operatorname{Ric}}
\newcommand{\Rm}{\operatorname{Rm}}

\newcommand{\id}{\operatorname{id}}
\newcommand{\loc}{\text{loc}}

\newcommand{\diam}{\operatorname{diam}}

\renewcommand{\L}{\mathcal{L}}
\newcommand{\T}{\mathcal{T}}

\newcommand{\dist}{\operatorname{dist}}

\newcommand{\Sing}{\mathcal{S}}

\theoremstyle{plain}
\newtheorem{theorem*}{Theorem}
\newtheorem{corollary*}{Corollary}
\newtheorem{question*}{Question}
\newtheorem{definition*}{Definition}
\newtheorem{claim*}{Claim}
\newtheorem{theorem}{Theorem}[section]
\newtheorem{lemma}[theorem]{Lemma}

\theoremstyle{definition}
\newtheorem{definition}[theorem]{Definition}
\theoremstyle{remark}
\newtheorem{remark}[theorem]{Remark}

\title[Smoothing $L^\infty$ metrics with nonnegative scalar curvature]{Smoothing $L^\infty$ Riemannian metrics with nonnegative scalar curvature outside of a singular set}
\author{Paula Burkhardt-Guim}
\address{Stony Brook University}
\email{paula.burkhardt-guim@stonybrook.edu}
\date{\today}



\begin{document}
\begin{abstract}
We show that any $L^\infty$ Riemannian metric $g$ on $\R^n$ that is smooth with nonnegative scalar curvature away from a singular set of finite $(n-\alpha)$-dimensional Minkowski content, for some $\alpha>2$, admits an approximation by smooth Riemannian metrics with nonnegative scalar curvature, provided that $g$ is sufficiently close in $L^\infty$ to the Euclidean metric. The approximation is given by time slices of the Ricci-DeTurck flow, which converge locally in $C^\infty$ to $g$ away from the singular set. We also identify conditions under which a smooth Ricci-DeTurck flow starting from a $L^\infty$ metric that is uniformly bilipschitz to Euclidean space and smooth with nonnegative scalar curvature away from a finite set of points must have nonnegative scalar curvature for positive times.
\end{abstract}
\maketitle

\section{Introduction}

It is natural to ask when a singular space satisfying a lower curvature bound in a weak sense can be approximated by smooth metrics satisfying the same lower curvature bound in the classical sense. In some cases, there are positive results of this type: for example, it is known that any $3$-dimensional polyhedral Alexandrov space with nonnegative curvature can be approximated by smooth $3$-manifolds with nonnegative sectional curvature \cite{LebedevaMatveevPetruninShevchishin15} (see also \cite{Spindeler14}). However, an Alexandrov space with curvature bounded below by $k$ need not, in general, be a Gromov--Hausdorff limit of Riemannian manifolds with sectional curvature bounded below by $k$ \cite{PetersenWilhelmZhu95}. In fact, it is conjectured that there exist Alexandrov spaces with curvature bounded below that admit no approximation by a sequence of Riemannian manifolds of fixed dimension with sectional curvature bounded below by $k$, for any $k\in \R$ (\cite{GuijarroKapovitch95}, \cite{Kapovitch04}).

Given these results, one could ask whether any analogous statements can be made for spaces with scalar curvature bounded below, and indeed the purpose of this paper is to prove an approximation result in this setting. The synthetic theory of lower scalar curvature bounds is less established than that of Alexandrov spaces (see, for instance, \cite{Sormani17}, \cite{Gromov14}, \cite{Li20-2}, \cite{LeeLeFloch15}, \cite{Bamler16}, \cite{PBG19}, \cite{LammSimon23} for some of the different notions of weak convergence and weak lower bounds that have been proposed for lower scalar curvature bounds). Nonetheless, examples of singular Riemannian metrics that are smooth with nonnegative scalar curvature outside of some set arise naturally in the study of spaces with scalar curvature bounded below. For instance, there are a number of singular versions of the Riemannian Positive Mass Theorem for metrics that are smooth with nonnegative scalar curvature away from a singular set (see, for example, \cite{Miao02}, \cite{McFeronSzekelyhidi12}, \cite{ShiTam18}, \cite{JiangShengZhang22}, \cite{LeeTam21}, \cite{ChuLeeZhu22}), in which the metric is assumed to be continuous across the singular set (often with some additional regularity), and geometric constraints on the singular set may be imposed. More generally, metrics on a punctured domain (\cite[Section 3.9]{Gromov21}, \cite{ChuLeeZhu24}), metrics with ``edge-type'' singularities \cite[Theorem 1.4]{LiMantoulidis19}, and cones over $\S^3$ \cite{BamlerChen23} are all examples of metrics that are smooth almost everywhere and that have been studied in the context of nonnegative scalar curvature.

In this paper we show that, under relatively mild assumptions, a Riemannian metric that is smooth with nonnegative scalar curvature almost everywhere admits an approximation by smooth metrics with globally nonnegative scalar curvature.

For a Riemannian metric $g$, let $R(g)$ denote the scalar curvature of $g$. Let $\delta$ denote the Euclidean metric on $\R^n$. We show the following:
\begin{theorem}\label{thm:SmoothApprox}
For all $\alpha>2$ and $n\geq 3$ there exists $\bar \varepsilon(\alpha, n)$ such that the following is true:

Suppose that $g$ is a measurable (with respect to the Euclidean measure) Riemannian metric on $\R^n$ such that $||g - \delta||_{L^\infty(\R^n)} < \bar \varepsilon$, where $|| \cdot||_{L^\infty(\R^n)}$ is measured with respect to $\delta$. Suppose that $g$ is smooth on $\R^n\setminus \Sing$, where $\Sing \subset \R^n$ is a set of finite $(n-\alpha)$-dimensional Minkowski content, and that $R(g)\geq 0$ on $\R^n\setminus \Sing$. Then $g$ admits an approximation by smooth Riemannian metrics with nonnegative scalar curvature, in the sense that there exists a smooth family of Riemannian metrics $(g_t)_{t>0}$ on $\R^n$ such that 
\begin{equation*}
R(g_t)\geq 0 \text{ for all } t>0
\end{equation*}
and
\begin{equation*}
g_t \xrightarrow[t\searrow 0]{C^{\infty}_{\loc}(\R^n\setminus \Sing)} g.
\end{equation*}
\end{theorem}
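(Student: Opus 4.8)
The plan is to run the Ricci–DeTurck flow coupled with the Euclidean metric, starting from the singular metric $g$, and to show that nonnegativity of scalar curvature is preserved in a suitable weak-then-strong sense. First I would invoke the existence theory for Ricci–DeTurck flow from $L^\infty$ initial data close to the Euclidean metric (in the spirit of Koch–Lamm, and as developed for singular initial data by Lamm–Simon and others): there is $\bar\varepsilon(\alpha,n)$ so that if $\|g-\delta\|_{L^\infty}<\bar\varepsilon$ then there is a smooth solution $(g_t)_{t>0}$ on $\R^n$ of the $\delta$-Ricci–DeTurck flow with $g_t\to g$ in a weak sense as $t\searrow0$, together with the interior estimates $|\partial^k g_t|\lesssim t^{-k/2}$ and, crucially, smooth local convergence $g_t\xrightarrow{C^\infty_{\loc}(\R^n\setminus\Sing)} g$ away from the singular set. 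This last point is where the Minkowski content hypothesis enters: one uses the pseudolocality-type estimates for Ricci–DeTurck flow to propagate the smooth convergence on any compact subset of $\R^n\setminus\Sing$, since near such a set $g$ is genuinely smooth.

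Next I would address scalar curvature. The evolution equation for $R(g_t)$ under Ricci–DeTurck flow has the schematic form $\partial_t R = \Delta_{g_t} R + \text{(lower order)}$, but unlike Ricci flow the reaction term is not manifestly favorable — there are terms involving the DeTurck vector field. The standard fix, following Lamm–Simon / Burkhardt-Guim, is to pull back by the DeTurck diffeomorphisms to obtain an honest Ricci flow $\tilde g_t$ on $\R^n\setminus\Sing$ (or on a region exhausting it), for which $\partial_t R(\tilde g_t) = \Delta R(\tilde g_t) + 2|\Ric(\tilde g_t)|^2 \geq \Delta R(\tilde g_t) + \tfrac{2}{n}R(\tilde g_t)^2$. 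One then wants a maximum principle to conclude $R(\tilde g_t)\geq 0$, hence $R(g_t)\geq 0$ since scalar curvature is diffeomorphism invariant.

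The hard part — and the step I expect to be the main obstacle — is running this maximum principle argument across the singular set, since $R(g)\geq 0$ is only known on $\R^n\setminus\Sing$ and $\Sing$ is not assumed to be removable in any classical sense; a priori $g$ could carry negative distributional scalar curvature concentrated on $\Sing$. The condition $\alpha>2$ is exactly what rules this out: a set of finite $(n-\alpha)$-dimensional Minkowski content with $\alpha>2$ has vanishing $(n-2)$-dimensional (upper) Minkowski content, so it is too small to support distributional scalar curvature, and more importantly it is small enough that it can be ``capped off'' by cutoff functions with controlled $W^{1,2}$ (or $W^{2,2}$) energy. Concretely, I would choose logarithmic cutoffs $\eta_\rho$ vanishing on the $\rho$-neighborhood of $\Sing$ with $\int |\nabla \eta_\rho|^2 \to 0$ (and similarly for the relevant second-order quantity), use the interior estimates $|\Rm(g_t)|\lesssim t^{-1}$ to control the flow near $\Sing$ for positive times, and show that the bad contribution of the DeTurck term / the boundary term near $\Sing$ vanishes in the limit $\rho\to0$. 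Combining this with the uniform-in-time control coming from the $L^\infty$-smallness (which keeps $g_t$ uniformly equivalent to $\delta$ and furnishes global a priori bounds), one obtains $R(g_t)\geq 0$ on all of $\R^n$ for every $t>0$, completing the proof. A secondary technical point is to ensure the various estimates are uniform enough to pass to $t\searrow0$ and genuinely recover $g$ as the limit in $C^\infty_{\loc}(\R^n\setminus\Sing)$; this should follow from the weak continuity at $t=0$ built into the existence theory together with local parabolic regularity away from $\Sing$.
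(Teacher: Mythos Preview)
Your overall strategy --- run the Koch--Lamm Ricci--DeTurck flow from $g$, obtain smooth local convergence to $g$ away from $\Sing$, and then argue that $R(g_t)\geq 0$ --- matches the paper's. Two points, however, deserve correction.

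First, a minor misattribution: the Minkowski content hypothesis is \emph{not} used to obtain $g_t\to g$ in $C^\infty_{\loc}(\R^n\setminus\Sing)$. The paper proves smooth local convergence away from any set $\Sing$ on whose complement $g$ is smooth (Theorem~\ref{thm:KL+} and Lemma~\ref{lemma:SmoothConvergenceae}), by local parabolic regularity and comparison with a globally smooth auxiliary initial metric. The Minkowski content enters only in the scalar curvature argument.

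Second, and more substantively, your proposed mechanism for $R(g_t)\geq 0$ --- cutoffs $\eta_\rho$ with $\int|\nabla\eta_\rho|^2\to 0$ and a maximum principle --- is not the paper's, and as written it has a real gap. The paper explicitly warns (Remark following Theorem~\ref{thm:smoothNNSCLinftySet}) that if one only uses that $R$ is a supersolution to the heat equation, the conclusion is false: a negative Gaussian solves the heat equation, is $\geq 0$ as $t\searrow 0$ everywhere except one point, yet is strictly negative for all $t>0$. Your outline writes down the source term $\tfrac{2}{n}R^2$ but never explains how a cutoff/energy argument would actually exploit it, and the capacity heuristic (``$\alpha>2$ means $\Sing$ is too small to support distributional scalar curvature'') does not apply directly since $R(g)$ is nonlinear in an $L^\infty$ metric and is not a priori a distribution.

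The paper's route is different. It first proves a quantitative \emph{spatial} lower bound (Theorem~\ref{thm:SingSetSpatialLowerbound}): for $x$ outside a shrinking tube around $\Sing$,
\[
R(x,t)\ \geq\ -\frac{\bar C}{t}\exp\!\left(-\frac{d^2_\delta(x,\Sing)}{\bar D\,t^{1-2\beta}}\right),
\]
obtained by an iterated heat-kernel/contradiction argument using the $\beta$-weak condition. It then integrates $R(\cdot,t)$ against the Bamler--Chen backwards kernel $u$ solving $-\partial_t u=\Delta u-\partial_X u-(1-\tfrac{2}{n})Ru$; this kernel is engineered precisely so that (i) $t\mapsto\int R\,u\,dg_t$ is nondecreasing (here the $\tfrac{2}{n}R^2$ source term is used) and (ii) one has the \emph{universal} bound $u(\cdot,t)\leq Ct^{-(1-2/n)c_0}$ with $c_0=c(n)\varepsilon$ arbitrarily small. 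Splitting $\int R\,u$ over $\T(\Sing,ct^\eta)$ and its complement, the tube contribution is controlled by $t^{-1}\cdot t^{-(1-2/n)c_0}\cdot t^{\alpha\eta}$, and the condition $\alpha>2$ (together with $\varepsilon$ small) is exactly what makes the exponent positive. Neither the spatial lower bound nor the Bamler--Chen kernel appears in your plan, and without an analogous device that genuinely uses the reaction term your cutoff argument would not close.
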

Note that in Theorem \ref{thm:SmoothApprox}, $g$ need not be continuous, and that, aside from the Minkowski content requirement, we do not impose geometric conditions on $\Sing$.

\begin{question*}
For $n\geq 3$, does there exist a $L^\infty$ metric on $\R^n$ that is uniformly bilipschitz to the Euclidean metric smooth outside of a singular set of finite $(n-2)$-dimensional Minkowski content, for which the conclusion of Theorem \ref{thm:SmoothApprox} fails? Does there exist such a metric for which the $(n-2)$-dimensional Minkowski content of the singular set is $0$?
\end{question*}
\begin{question*}
Suppose that in the setting of Theorem \ref{thm:SmoothApprox}, $\Sing$ has Hausdorff dimension equal to $n-\alpha$ for some $\alpha > 2$, rather than finite $(n-\alpha)$-dimensional Minkowski content. Does the conclusion of Theorem \ref{thm:SmoothApprox} still hold?
\end{question*}

The approximation in Theorem \ref{thm:SmoothApprox} is given by time slices of a solution to the Ricci-DeTurck flow, a geometric flow closely related to the Ricci flow that depends on a choice of ``background'' metric (see Section \ref{sec:preliminaries} for details; we take the background metric to be the Euclidean metric for most of this paper), and Theorem \ref{thm:SmoothApprox} follows immediately from:
\begin{theorem}\label{thm:smoothNNSCLinftySet}
For all $\alpha>2$ and $n\geq 3$ there exists $\bar \varepsilon(\alpha, n)$ such that the following is true:

Suppose that $g$ is a measurable metric on $\R^n$ such that $||g - \delta||_{L^\infty(\R^n)} < \bar \varepsilon$, where $|| \cdot||_{L^\infty(\R^n)}$ is measured with respect to $\delta$. Suppose that $g$ is smooth on $\R^n\setminus \Sing$, where $\Sing \subset \R^n$ is a set of finite $(n-\alpha)$-dimensional Minkowski content, and that $R(g)\geq 0$ on $\R^n\setminus \Sing$. Then there exists a smooth Ricci-DeTurck flow $(g_t)_{t\in (0, \infty)}$ with respect to the background metric $\delta$ such that
\begin{equation*}
R(g_t)\geq 0 \text{ for all } t>0
\end{equation*}
and
\begin{equation*}
g_t \xrightarrow[t\searrow 0]{C^{\infty}_{\loc}(\R^n\setminus \Sing)} g.
\end{equation*}
\end{theorem}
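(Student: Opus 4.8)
The plan is to construct the approximating family as a Ricci--DeTurck flow issuing from $g$ with background metric $\delta$, invoking the existence theory for parabolic flows from rough initial data, and then to establish the two conclusions separately; the nonnegativity of the scalar curvature is where essentially all the work lies.

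\textbf{Existence and estimates for the flow.} For $\bar\varepsilon=\bar\varepsilon(\alpha,n)$ small, the hypothesis $\|g-\delta\|_{L^\infty(\R^n)}<\bar\varepsilon$ puts us in the regime of Koch--Lamm-type existence results, which produce a smooth Ricci--DeTurck flow $(g_t)_{t>0}$ with respect to $\delta$ satisfying $\|g_t-\delta\|_{L^\infty(\R^n)}\leq C\bar\varepsilon$, the parabolic smoothing estimates $t^{k/2}\|\nabla^k(g_t-\delta)\|_{L^\infty(\R^n)}\leq C_k\bar\varepsilon$ for all $k\geq 1$, $t>0$, and $g_t\to g$ as $t\searrow 0$ in $L^1_{\loc}(\R^n)$ and almost everywhere. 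In particular each $g_t$ is uniformly bi-Lipschitz to $\delta$ with $|\Rm(g_t)|\leq C\bar\varepsilon/t$, and I will record that $R:=R(g_t)$ solves $\partial_t R=\Delta_{g_t}R+\langle W_t,\nabla R\rangle_{g_t}+2|\Ric(g_t)|^2_{g_t}$, where $W_t$ is the DeTurck vector field, with $|W_t|_{g_t}\leq C\bar\varepsilon/\sqrt t$. Note also that finiteness of the $(n-\alpha)$-dimensional Minkowski content forces $\Sing$ to be bounded.

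\textbf{Convergence to $g$ off $\Sing$.} Fix $x_0\notin\Sing$ and $r>0$ with $B_{4r}(x_0)\cap\Sing=\emptyset$, so $g$ is smooth on $B_{4r}(x_0)$. Extending $g|_{B_{3r}(x_0)}$ to a global smooth metric $\tilde g$ with $\|\tilde g-\delta\|_{L^\infty}\leq C\bar\varepsilon$ and $\tilde g=g$ on $B_{2r}(x_0)$, the Ricci--DeTurck flow $(\tilde g_t)$ of $\tilde g$ is smooth up to $t=0$ with $\tilde g_t\to\tilde g$ in $C^\infty$ on compact sets. The difference $g_t-\tilde g_t$ solves a homogeneous linear parabolic system (after subtracting, the nonlinearity produces only terms linear in $g_t-\tilde g_t$ and its first two derivatives), with leading coefficient close to $\delta^{ij}$, lower-order coefficients controlled by the smoothing estimates, and initial value vanishing on $B_{2r}(x_0)$; a localized energy estimate together with the off-diagonal Gaussian decay of the associated heat kernels then gives $\|g_t-\tilde g_t\|_{C^k(B_r(x_0))}\to 0$ as $t\searrow 0$ for each $k$, hence $g_t\to g$ in $C^\infty(B_r(x_0))$. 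This is the localization argument already used for continuous initial metrics, adapted with only cosmetic changes, and it yields $g_t\to g$ in $C^\infty_{\loc}(\R^n\setminus\Sing)$.

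\textbf{Nonnegativity of the scalar curvature.} The evolution equation above, the bound on $|\Rm(g_t)|$, and the boundedness of $|W_t|$ on each $[t_1,t_2]\subset(0,\infty)$ let one run a maximum principle on $\R^n$ for the supersolution $R$ (dropping the nonnegative reaction term $2|\Ric|^2$), showing $\inf_{\R^n}R(g_t)$ is non-decreasing in $t$. It therefore suffices to produce, for each $\eta>0$, arbitrarily small $t>0$ with $R(g_t)\geq-\eta$ on $\R^n$; letting $\eta\searrow 0$ then gives $R(g_t)\geq 0$ for all $t$. Since $-R(g_t)$, and hence its positive part $u_t:=\max(-R(g_t),0)$, is a subsolution of the same linear parabolic equation, $u_t$ at a later time is dominated by the heat evolution of $u_t$ at an earlier time; using Gaussian bounds for that heat kernel (valid since $g_t$ is uniformly equivalent to $\delta$ and $\int_{t_1}^{t_2}\|W_s\|_{L^\infty}\,ds\lesssim\bar\varepsilon(\sqrt{t_2}-\sqrt{t_1})$) together with a spatial cutoff adapted to a fixed large ball containing $\Sing$ — whose cutoff error is controlled by $u_s$ on a fixed compact subset of $\R^n\setminus\Sing$, hence $o(1)$ as $s\searrow 0$ by the previous step — reduces matters to estimating $\|u_{t/2}\|_{L^1}$ near $\Sing$. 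On the complement of a $C\sqrt t$-neighborhood of $\Sing$ the smoothing and localization estimates force $u_t$ to be small, while everywhere $u_t\leq C\bar\varepsilon/t$; hence $\|u_{t/2}\|_{L^1}\lesssim\bar\varepsilon\,t^{-1}\,\mathcal{L}^n\bigl(\{x:\dist(x,\Sing)<C\sqrt t\}\bigr)+o(1)$. Here the hypothesis on $\Sing$ enters decisively: finite $(n-\alpha)$-dimensional Minkowski content gives $\mathcal{L}^n(\{\dist(\cdot,\Sing)<\rho\})\leq C\rho^\alpha$ for small $\rho$, so the displayed quantity is $\lesssim\bar\varepsilon\,t^{\alpha/2-1}+o(1)\to 0$ precisely because $\alpha>2$. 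Hence $\sup_{\R^n}u_t\to 0$ along a sequence $t\searrow 0$, completing the argument.

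\textbf{The main obstacle.} The heart of the matter is the last step, and within it the claim that the negative part of $R(g_t)$ is, up to a term tending to zero, supported in a $C\sqrt t$-neighborhood of $\Sing$ — i.e. that away from such a neighborhood the parabolic smoothing genuinely upgrades ``$R(g)\geq 0$ almost everywhere'' to ``$R(g_t)\geq-o(1)$'' — together with the bookkeeping needed to keep all cutoff and spatial-tail errors uniformly small as $t\searrow 0$ (here the boundedness of $\Sing$ is what allows cutoffs to be placed in regions where $g$ is smooth). This is exactly the balance between the parabolic regularization scale $\sqrt t$ and the codimension of the singular set, and it is why the result needs $\alpha>2$ — codimension strictly greater than $2$, the threshold below which a singular stratum can genuinely obstruct a scalar curvature lower bound — and finite Minkowski content rather than only a Hausdorff dimension bound, the latter yielding the neighborhood-volume estimate along only a sparse sequence of scales.
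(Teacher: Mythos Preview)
Your overall architecture matches the paper's: build the Koch--Lamm flow, prove $C^\infty_{\loc}$ convergence off $\Sing$ by localization, then control the negative part of $R(g_t)$ via heat-kernel integration together with the Minkowski-content bound on the tube around $\Sing$. But the step you flag as ``the main obstacle'' really is one, and your sketch does not close it. The assertion ``on the complement of a $C\sqrt t$-neighborhood of $\Sing$ the smoothing and localization estimates force $u_t$ to be small'' does not follow from the localization you proved: that argument gives $R(g_t)\to R(g)\geq 0$ uniformly on \emph{fixed} compact subsets of $\R^n\setminus\Sing$, with a rate that depends on the distance to $\Sing$ and on the (uncontrolled near $\Sing$) $C^k$ norms of $g$. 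The complement of a $C\sqrt t$-tube is a $t$-dependent region whose inner boundary approaches $\Sing$, so no uniform smallness is available from localization alone. The paper supplies this missing quantitative bound as a separate theorem (its ``Step~1''): an iterative heat-kernel argument, repeatedly halving time and chasing a point of negativity down to a limit in $\R^n\setminus\Sing$, yields
\[
R(x,t)\ \geq\ -\tfrac{\bar C}{t}\exp\!\bigl(-d_\delta^2(x,\Sing)/(\bar D\,t^{1-2\beta})\bigr)\quad\text{for }x\notin\T(\Sing,\bar c\,t^\eta),\ \eta<\tfrac{1-2\beta}{2}.
\]
Note this is vacuous at scale $d\sim\sqrt t$; one must take the tube radius $t^\eta$ with $\eta$ strictly less than $1/2$, which is still compatible with $\alpha\eta>1$ when $\alpha>2$. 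This exponential spatial bound is also what makes $u_s$ integrable over the unbounded complement of the tube --- your cutoff sketch (``cutoff error controlled by $u_s$ on a fixed compact subset'') leaves the contribution from the noncompact exterior unaccounted for, since there you only have $u_s\leq C\bar\varepsilon/s$.

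There is also a methodological difference worth noting. You drop the reaction term $2|\Ric|^2$ and run the comparison through the \emph{forward} Ricci--DeTurck heat kernel $\Phi(p,1;\cdot,s)$, which is uniformly bounded for $s\leq 1/2$. The paper instead retains the $\tfrac{2}{n}R^2$ portion and integrates against the Bamler--Chen backwards kernel solving $-\partial_t u=\Delta_{g_t}u-\partial_X u-(1-\tfrac{2}{n})R\,u$, for which one has the key bound $u(\cdot,t)\leq Ct^{-(1-2/n)c_0}$ with $c_0=c(n)\bar\varepsilon$ small; this produces the clean monotonicity $R(p,1)\geq\int R\,u\,dg_t$. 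The paper's Remark after Theorem~\ref{thm:smoothNNSCLinftySet} emphasizes that for a generic heat supersolution the conclusion is false (the negative-Gaussian example), so the extra structure --- the source term and the coupled volume evolution, or equivalently the a~priori bound $|R|\leq c\bar\varepsilon/t$ --- is essential. Once the spatial decay above is in hand, either kernel suffices to close the argument; but as written, that spatial decay is the genuine gap in your proof.
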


\begin{remark}
Given that the scalar curvature under Ricci flow is a supersolution to the heat equation (see (\ref{eq:Ricciscalarev})), Theorem \ref{thm:smoothNNSCLinftySet} may seem surprising by analogy: consider a negative Gaussian evolving by the heat equation on $\R^n\times(0,\infty)$, which tends to $0$ everywhere except at the origin as $t\searrow 0$. This example demonstrates that the conclusion of Theorem \ref{thm:smoothNNSCLinftySet} is false when $R(g_t)$ is replaced with a solution to the heat equation with respect to $\delta$. The key differences between this example and the statement of Theorem \ref{thm:smoothNNSCLinftySet} are that the evolution of the volume form under Ricci flow is also influenced by the scalar curvature (see (\ref{eq:RicciVolev})), and also that there is a positive source term in the evolution equation for the scalar curvature under Ricci flow (\ref{eq:Ricciscalarev}).
\end{remark}

\begin{remark}
We expect results analogous to Theorems \ref{thm:smoothNNSCLinftySet} and \ref{thm:SmoothApprox} to hold on manifolds for $\varepsilon$ perturbations of complete smooth metrics with bounded curvature, in view of \cite{Simon02} and \cite{PBG19}. The condition that the metric $g$ be $(1+\varepsilon)$-bilipschitz to a fixed complete smooth background metric of bounded curvature is used in two ways:
\begin{enumerate}
\item To guarantee the existence of a Ricci-DeTurck flow starting from $g$, as in Theorem \ref{thm:KL+}, and
\item To ensure an a priori bound of the form $R(g_t)\geq - c\varepsilon/t$ for some $c>0$, for all $t>0$, which in turn is used to derive an upper bound for a backwards heat kernel, as in the proofs of Theorem \ref{thm:nonstandardHK} and \cite[Theorem 2.3]{BamlerChen23}.
\end{enumerate}
\end{remark}
Interestingly, the second use seems to be somewhat inessential in the case that the singular set consists of finitely many points:
\begin{theorem}\label{thm:discretesetsmoothing}
Suppose that $\Sing\subset \R^n$ has finite $0$-dimensional Minkowski content. Suppose that $g$ is a measurable Riemannian metric on $\R^n$ that is smooth on $\R^n\setminus \Sing$ and satisfies $R(g)\geq 0$ on this region. Suppose that there exists a smooth Ricci-DeTurck flow $(g_t)_{t\in (0,T)}$, defined for some $T>0$, on $\R^n$ with respect to the background metric $\delta$, satisfying:
\begin{enumerate}
\item\label{item:discretesetconvergence} $g_t \xrightarrow[t\searrow 0]{C^2_\loc(\R^n\setminus \Sing)} g$,
\item there exists some $c>0$ such that for $k = 1,2,$ $|\nabla^k(g_t)|_{\delta}\leq c/t^{k/2}$, where $\nabla$ is taken with respect to $\delta$,
\item there exists some $b>0$ such that $g_t$ is $(1+b)$-bilipschitz to $\delta$ for all $t\in (0,T)$, and
\item\label{item:discretesetRlowerbound} there exists some $0 < c_0 < n/2$ such that for all $t\in (0,T)$, $R(g_t)\geq -c_0/t$.
\end{enumerate}
Then $R(g_t)\geq 0$ for all $t\in (0,T)$.
\end{theorem}
\begin{remark}
Any Ricci-DeTurck flow $(g_t)_{t\in (0,T)}$ satisfies a universal lower scalar curvature bound of the form given by item (\ref{item:discretesetRlowerbound}) with $c_0 = n/2$; see (\ref{eq:universalRbound}). Theorem \ref{thm:discretesetsmoothing} does not address the edge case $c_0 = n/2$.
\end{remark}

In a previous draft of this paper that was posted on the arXiv, we posed the following question concerning the sharpness of the $(1+\varepsilon)$-bilipschitz condition:
\begin{question*}\label{question:epsilonnecessary}
Is the $(1+\varepsilon)$-bilipschitz condition necessary? That is, are Theorems \ref{thm:SmoothApprox} and \ref{thm:smoothNNSCLinftySet} also true for metrics that are merely uniformly bilipschitz to some fixed complete smooth background metric of bounded curvature?
\end{question*}
Question \ref{question:epsilonnecessary} has since been answered by Cecchini -- Frenck -- Zeidler \cite[Theorem B]{CecchiniFrenckZeidler24}. They show that the $(1+\varepsilon)$-bilipschitz is indeed necessary: for certain $n\geq 8$ there exists a metric $g$ on $\R^n$ that is uniformly bilipschitz to the Euclidean metric and smooth with positive scalar curvature on $\R^n\setminus\{0\}$, but for which there exists no smooth family of Riemannian metrics $(g_t)_{t\in (0,T)}$ satisfying both 
\begin{equation*}
R(g_t)\geq 0 \text{ for all } t\in (0, T)
\end{equation*} 
and 
\begin{equation*}
g_t \xrightarrow[t\searrow 0]{C^2_\loc(\R^n\setminus\{0\})} g.
\end{equation*} 
In particular, Theorem \ref{thm:discretesetsmoothing} places restrictions on possible Ricci-DeTurck flows, with the background metric $\delta$, starting from these metrics.

Given the interest in $C^0$ metrics with scalar curvature bounded below in a synthetic sense (\cite{Gromov14}, \cite{Bamler16}, \cite{LeeTam21}, \cite{AntonelliFogagnoloNardulliPozzetta24}), we now record a version of Theorem \ref{thm:smoothNNSCLinftySet} that implicitly makes use of the weak local notion of lower scalar curvature bounds (\ref{eq:betaweak}) that was first introduced for $C^0$ metrics in \cite{PBG19}. However, note that while the local condition (\ref{eq:betaweak}) was originally introduced in the context of continuous metrics, the following theorem does not assume that the Ricci-DeTurck flow $(g_t)_{t>0}$ starts from continuous initial data.
\begin{theorem}\label{thm:betaNNSCLinftySet}
For all $\alpha>2$ and $n\geq 3$ there exist $\bar \varepsilon(\alpha, n)$, $\bar \beta = \bar \beta(\alpha, n)$ such that for all $0 < \beta < \bar \beta$ the following is true:

Suppose that $(g_t)_{t>0}$ is a smooth solution to the Ricci-DeTurck flow with $\delta$-background on $\R^n$, such that for $k = 0,1,2$ there exists some $\varepsilon \in (0, \bar \varepsilon)$ such that $||\nabla^k (g_t - \delta)||_{L^\infty(\R^n)} \leq \varepsilon/t^{k/2}$. Suppose that for some $\alpha > 2$, $\Sing \subset \R^n$ is a set of finite $(n-\alpha)$-dimensional Minkowski content, such that for some $0 < \beta < \bar \beta$, $(g_t)_{t>0}$ satisfies the $\beta$-weak condition (\ref{eq:betaweak}) at all $x\in \R^n\setminus \Sing$. Then $R(g_t)\geq 0$ for all $t>0$.
\end{theorem}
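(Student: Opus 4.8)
The plan is to fix an arbitrary point $(x_0,t_0)$ with $t_0>0$ and prove $R(g_{t_0})(x_0)\geq 0$ by a monotonicity argument for a conjugate-heat-kernel weighted integral of $R(g_t)$, following the circle of ideas around \cite[Theorem~2.3]{BamlerChen23} but inserting the singular set $\Sing$ into the estimate. Since $(g_t)$ is a Ricci--DeTurck flow with $\delta$-background, $R(g_t)$ is the pullback under diffeomorphisms of the scalar curvature of a Ricci flow, so it satisfies an evolution equation of the form $\partial_t R(g_t)=\Delta_{g_t}R(g_t)+\langle W_t,\nabla R(g_t)\rangle_{g_t}+2|\Ric(g_t)|_{g_t}^2$, with $W_t$ the DeTurck vector field; in particular $R(g_t)$ is a supersolution of a linear parabolic equation with nonnegative source (compare (\ref{eq:Ricciscalarev})). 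I would let $K(\cdot,t)=K(\cdot,t;x_0,t_0)$, $t\in(0,t_0)$, be the positive fundamental solution of the associated conjugate heat equation, normalized so that $K(\cdot,t)\,d\mu_{g_t}\rightharpoonup\delta_{x_0}$ as $t\nearrow t_0$; its existence and Gaussian decay are standard given the bounded geometry coming from $\|g_t-\delta\|_{L^\infty}<\varepsilon$. Using $\partial_t\,d\mu_{g_t}=(-R(g_t)+\div_{g_t}W_t)\,d\mu_{g_t}$ and integrating by parts, one checks $\int_{\R^n}K\,d\mu_{g_t}\equiv 1$ and
\[
\frac{d}{dt}\int_{\R^n}R(g_t)\,K\,d\mu_{g_t}=\int_{\R^n}2|\Ric(g_t)|_{g_t}^2\,K\,d\mu_{g_t}\geq 0,
\]
so that $h(t):=\int_{\R^n}R(g_t)K\,d\mu_{g_t}$ is nondecreasing on $(0,t_0)$ with $h(t)\to R(g_{t_0})(x_0)$ as $t\nearrow t_0$. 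Hence it suffices to show $\liminf_{t\searrow 0}h(t)\geq -\eta$ for every $\eta>0$.

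Next I would record the a priori bounds. The hypotheses $\|\nabla^k(g_t-\delta)\|_{L^\infty}\leq\varepsilon/t^{k/2}$ for $k\leq 4$ give $|R(g_t)|\leq c(n)\varepsilon/t$ and $|W_t|_{g_t}\leq c(n)\varepsilon/t^{1/2}$, together with the corresponding parabolically scaled bounds on $\nabla R(g_t)$ and $\nabla W_t$. Feeding the borderline potential bound $c(n)\varepsilon/t$ into the parabolic estimates behind Theorem~\ref{thm:nonstandardHK} (cf.\ \cite[Theorem~2.3]{BamlerChen23}) produces, for $\varepsilon$ small, a Gaussian-type upper bound
\[
K(y,t;x_0,t_0)\leq \frac{C\,(t_0/t)^{c\varepsilon}}{(t_0-t)^{n/2}}\exp\!\Big(-\frac{\dist_{g_{t_0}}(y,x_0)^2}{C\,(t_0-t)}\Big),\qquad t\in(0,t_0/2],
\]
with $C=C(n)$; the factor $(t_0/t)^{c\varepsilon}$ is the price of the non-integrable potential, and it is here that the $(1+\varepsilon)$-closeness to $\delta$, as opposed to mere bilipschitz equivalence, enters. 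Choosing $L(t):=\Lambda\sqrt{t_0\log(1/t)}$ with $\Lambda=\Lambda(n)$ large, the Gaussian bound together with $|R(g_t)|\leq c(n)\varepsilon/t$ gives $\big|\int_{\R^n\setminus B_{g_{t_0}}(x_0,L(t))}R(g_t)\,K\,d\mu_{g_t}\big|\to 0$ as $t\searrow 0$, so only the bounded region $B:=B_{g_{t_0}}(x_0,L(t))$ matters.

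On $B$ I would split along the Euclidean $\sqrt t$-neighborhood of $\Sing$. Finiteness of the $(n-\alpha)$-dimensional Minkowski content forces $\Sing$ to be bounded and $\vol_\delta\big(N_{\sqrt t}(\Sing)\big)\leq C(\Sing)\,t^{\alpha/2}$ for $t$ small, where $N_r(\Sing)$ is the Euclidean $r$-neighborhood. On $B\cap N_{\sqrt t}(\Sing)$ I use $R(g_t)\geq -c(n)\varepsilon/t$ and the Gaussian bound on $K$ to estimate
\[
\Big|\int_{B\cap N_{\sqrt t}(\Sing)}R(g_t)\,K\,d\mu_{g_t}\Big|\;\leq\; \frac{c(n)\varepsilon}{t}\cdot\frac{C\,(t_0/t)^{c\varepsilon}}{t_0^{n/2}}\cdot C(\Sing)\,t^{\alpha/2}\;\lesssim\; t^{\,\alpha/2-1-c\varepsilon}\xrightarrow[t\searrow 0]{}0,
\]
where the decay holds precisely because $\alpha>2$ (shrinking $\bar\varepsilon$ so that $\tfrac{\alpha}{2}-1-c\bar\varepsilon>0$); this is the single place the exponent $\alpha$ is used, and codimension strictly greater than $2$ is exactly what is needed to beat the $\varepsilon/t$ in the curvature bound. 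On the remaining region $B\setminus N_{\sqrt t}(\Sing)\subset\R^n\setminus\Sing$ I invoke the $\beta$-weak condition (\ref{eq:betaweak}): for each $x\notin\Sing$ it supplies a threshold $t_x>0$ with $R(g_t)\geq -\eta$ near $x$ once $t<t_x$; after upgrading this to a bound uniform over $B\setminus N_{\sqrt t}(\Sing)$ one gets $\int_{B\setminus N_{\sqrt t}(\Sing)}R(g_t)\,K\,d\mu_{g_t}\geq -\eta\int_{\R^n}K\,d\mu_{g_t}=-\eta$. Assembling the three contributions yields $\liminf_{t\searrow 0}h(t)\geq -\eta$, and since $\eta$ was arbitrary, $R(g_{t_0})(x_0)=\lim_{t\nearrow t_0}h(t)\geq 0$.

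The main obstacle is precisely that last upgrade: the weak condition (\ref{eq:betaweak}) is pointwise and gives no uniformity in $x$, there is no continuity of the initial data to lean on, and the region $B\setminus N_{\sqrt t}(\Sing)$ grows (slowly) as $t\searrow 0$, while $R(g_t)$ may oscillate on scale $\sqrt t$ by as much as its own size $\varepsilon/t$. I would handle this by contradiction and parabolic rescaling: given $t_i\searrow 0$ and $x_i\in B\setminus N_{\sqrt{t_i}}(\Sing)$ with $R(g_{t_i})(x_i)<-\eta$, rescale $\tilde g^{(i)}(s):=t_i^{-1}g_{t_i s}(x_i+t_i^{1/2}\,\cdot\,)$ and use the derivative bounds for $k\leq 4$ to extract a smooth limiting Ricci--DeTurck flow on a parabolic neighborhood with $R<0$ at an interior point, contradicting a limiting version of (\ref{eq:betaweak}); here the constraint $\beta<\bar\beta$ is used to keep the rescaled $\beta$-balls clear of the rescaled singular directions and to ensure the limiting weak inequality is available. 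This compactness/contradiction step, rather than the by now fairly standard kernel monotonicity and Gaussian estimate, is where I expect the real work to lie.
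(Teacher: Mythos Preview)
Your overall architecture (monotonicity of a backwards-heat-kernel weighted integral of $R$, then splitting the spatial integral near and away from $\Sing$ as $t\searrow 0$) is exactly the scheme the paper follows. The near-singular estimate you wrote is essentially correct and is where the Minkowski-content hypothesis and the kernel bound from Theorem~\ref{thm:nonstandardHK} are used. The genuine gap is the ``good region'' step, and your proposed compactness fix does not close it.

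The $\beta$-weak condition \eqref{eq:betaweak} is a pointwise $\liminf$ with no uniform rate in $x$, so it does not survive parabolic rescaling and passage to a limit. Concretely, if $x_i\in B\setminus N_{\sqrt{t_i}}(\Sing)$ with $R(g_{t_i})(x_i)<-\eta$, then after your rescaling the translated singular set sits at rescaled distance $\ge 1$ from the origin, but points $z$ in the unit ball of the rescaled picture correspond to points $x_i+\sqrt{t_i}\,z$ which may lie in $\Sing$ (or arbitrarily close to it), so there is no $\beta$-weak hypothesis available there; and even at points where \eqref{eq:betaweak} does hold, the threshold $t_x$ may go to $0$ along the sequence, so the limiting flow inherits nothing. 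In fact the uniform statement you are trying to prove, $R(g_t)\ge -\eta$ on all of $B\setminus N_{\sqrt t}(\Sing)$ for small $t$, is stronger than what the paper establishes and is not expected to hold: at distance exactly $\sqrt t$ from $\Sing$ the only available lower bound is the trivial $-c\varepsilon/t$.

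What the paper does instead is to first prove, by an iterated forward heat-kernel argument (Theorem~\ref{thm:SingSetSpatialLowerbound}), a \emph{quantitative} lower bound
\[
R(x,t)\ \ge\ -\frac{\bar C}{t}\exp\!\Big(-\frac{d_\delta(x,\Sing)^2}{\bar D\,t^{1-2\beta}}\Big)
\qquad\text{for }x\notin \T(\Sing,\bar c\,t^{\eta}),\ \eta\in(0,\tfrac{1-2\beta}{2}],
\]
obtained by showing that if $R(x,t)$ violates this bound then one can find $x^1\in B(x,c_0 d_\delta(x,\Sing)(t/2)^\beta)$ with $R(x^1,t/2)$ violating a slightly weaker bound, and iterating; the $x^k$ converge (since $\sum 2^{-k\beta}<\infty$) to some $x^\infty\notin\Sing$, contradicting \eqref{eq:betaweak} at $x^\infty$. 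This single lemma does two jobs you are missing: it replaces your uniform $R\ge -\eta$ on the good region by a bound with Gaussian decay in $d_\delta(x,\Sing)$, and that Gaussian decay is what makes the integral over $\R^n\setminus\T(\Sing,ct^\eta)$ converge and tend to $0$, using only the $L^\infty$ bound $u\le Ct^{-(1-2/n)c_0}$ from Theorem~\ref{thm:nonstandardHK}. In particular the paper never needs (and does not prove) the Gaussian upper bound on $K$ uniform in $t\in(0,t_0/2]$ that you invoke for the far region; Lemma~\ref{lemma:nonstandardHK} only gives Gaussian decay on $[a,1]$ with constants blowing up as $a\to 0$. Finally, note that the correct tube scale is $t^\eta$ with $\eta$ slightly below $\tfrac{1-2\beta}{2}$, not $\sqrt t$: this is forced by the exponent $t^{1-2\beta}$ in the spatial bound, and is why $\bar\beta$ must be chosen small depending on $\alpha$.
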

Similarly, the statement of Theorem \ref{thm:discretesetsmoothing} holds when the condition that $R(g)\geq 0$ on $\R^n\setminus\Sing$ is replaced with the condition that $(g_t)$ satisfies the $\beta$-weak condition on $\R^n\setminus\Sing$, when $\beta$ is sufficiently small depending on $c_0$ and $n$. A similar result to Theorem \ref{thm:betaNNSCLinftySet} was proven for continuous initial data on closed manifolds by Lee -- Tam in \cite[Corollary 4.2]{LeeTam21}. Their method of proof is different from the methods in this paper.

As an intermediary step to proving Theorems \ref{thm:betaNNSCLinftySet} and \ref{thm:discretesetsmoothing}, we first prove the following spatial bound for scalar curvature under Ricci-DeTurck flow:
\begin{theorem}\label{thm:SingSetSpatialLowerbound}
Suppose that $(g_t)_{t\in (0,1]}$ is a solution to the Ricci-DeTurck flow on $\R^n$, with respect to the Euclidean background metric $\delta$, such that $g_t$ is $(1 + b)$-bilipschitz to $\delta$ for some $b>0$, and for which there exists some $c>0$ such that $||\nabla^{k}(g_t)||_{C^0(\R^n)} \leq c/t^{k/2}$ for $k = 1,2$, where $\nabla$ is taken with respect to $\delta$. Let $\Sing\subset \R^n$. Suppose that there exists some $0 < \beta < 1/2$ such that for all $x\in \R^n\setminus \Sing$, $g_t$ satisfies the $\beta$-weak condition (\ref{eq:betaweak}) at $x$ with the lower bound $\kappa_0$, for some $\kappa_0 \in \R$. Then, for any $0 < \eta \leq (1-2\beta)/2$ there exists $\bar C(c,n,\kappa_0)$, $\bar D(c,n)$, $\bar c(c,n)$ such that for all $0< t \leq 1$ and $x\notin \T(\Sing, \bar ct^\eta)$ we have
\begin{equation*}
R(x,t) \geq \kappa_0 -\frac{\bar C}{t}\exp\left(-\frac{d^2_{\delta}(x, \Sing)}{\bar Dt^{1-2\beta}}\right),
\end{equation*}
where $\T(\Sing, \bar ct^{\eta}) = \{y\in \R^n: d_{\delta}(y, \Sing) < \bar ct^\eta\}$.
\end{theorem}

We now describe the organization of this paper. In Section \ref{sec:preliminaries} we establish some preliminary material concerning Ricci and Ricci-DeTurck flow. In particular, we introduce two heat kernels that will be used in subsequent sections, and record some estimates for these kernels. In Section \ref{sec:spatialLowerBound} we use the fact that scalar curvature is a super solution to the heat equation under Ricci-DeTurck flow and exponential heat kernel estimates to prove Theorem \ref{thm:SingSetSpatialLowerbound}. In Section \ref{sec:uniformLowerBound} we upgrade the spatial lower bound from Section \ref{sec:spatialLowerBound} to a uniform nonnegative lower bound, using a backwards heat kernel introduced by Bamler -- Chen \cite{BamlerChen23}, in order to prove Theorems \ref{thm:betaNNSCLinftySet}, \ref{thm:smoothNNSCLinftySet},  and \ref{thm:discretesetsmoothing}.

\subsection*{Acknowledgements} I am grateful to a number of people who have suggested versions of this problem to me. They are: Richard Bamler, Bruce Kleiner, Chao Li, and Christos Mantoulidis. In addition, I would like to thank Bruce Kleiner for many helpful conversations about both the mathematics and the writing of this paper. I would also like to thank Richard Bamler for helpful discussions, especially concerning \cite{BamlerChen23}. Finally, I am grateful to Vitali Kapovitch for a very illuminating correspondence about Alexandrov spaces. I am also grateful Man Chun Lee for a clarifying conversation about the role of the $(1+\varepsilon)$-bilispchitz condition in an earlier version of this paper, and to Rudolf Zeidler for sending me a draft of \cite{CecchiniFrenckZeidler24}.

This material is based upon work supported by the National Science Foundation under Award No. DMS $2103145$.

\section{Preliminaries}\label{sec:preliminaries}
We fix the following notation throughout: Let $\delta$ denote the Euclidean metric. For $\Sing \subset \R^n$ and $r>0$, let 
\begin{equation*}
\T(\Sing, r) = \{y\in \R^n: d_\delta(x,y) < r \text{ for some } x\in \Sing\}.
\end{equation*}

Recall that, for $0 \leq m \leq n$ and $\Sing \subset \R^n$, the $m$-dimensional Minkowski content of $\Sing$ is
\begin{equation*}
\lim_{r\to 0^+} \frac{|\{x: d_\delta(x,\Sing) < r\}|}{\omega(n-m)r^{n-m}},
\end{equation*}
where here $|\cdot|$ denotes $n$-dimensional Lebesgue measure (with respect to $\delta$) and $\omega(n-m)$ denotes the Euclidean volume of the $(n-m)$-dimensional unit ball, provided that this limit exists.

\begin{lemma}\label{lemma:CCSingularSet}
For $n\geq 3, \alpha \geq 0$, let $\Sing \subset \R^n$ be a set of finite $(n-\alpha)$-dimensional Minkowski content. Then there exists some compact $K\subset \R^n$ such that $\Sing\subset K$.
\end{lemma}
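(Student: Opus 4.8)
The plan is to extract from the finiteness of the Minkowski content a single quantitative consequence — that for some fixed radius $r_0 > 0$ the tube $\T(\Sing, r_0)$ has finite Lebesgue measure — and then to deduce that any subset of $\R^n$ whose $r_0$-tube has finite measure must be bounded. Taking $K$ to be the closure of $\Sing$ would then finish the proof, since a closed and bounded subset of $\R^n$ is compact.

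For the first step, I would set $m = n - \alpha$, so that $n - m = \alpha$, and use that the hypothesis of finite $(n-\alpha)$-dimensional Minkowski content means the limit $\lim_{r \to 0^+} |\T(\Sing, r)| / (\omega(\alpha)\, r^{\alpha})$ exists and equals some finite $C \geq 0$; in particular this ratio stays bounded as $r \to 0^+$, so one can fix $r_0 > 0$ with $|\T(\Sing, r_0)| \leq (C+1)\,\omega(\alpha)\, r_0^{\alpha} < \infty$. For the second step, I would argue by contradiction: if $\Sing$ were unbounded there would be a sequence $(x_k) \subset \Sing$ with $|x_k| \to \infty$, and choosing points greedily (each of norm exceeding that of all previously chosen ones by more than $r_0$) one passes to a subsequence with $d_\delta(x_k, x_j) > r_0$ for all $k \neq j$. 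Then the open Euclidean balls $B(x_k, r_0/2)$ of radius $r_0/2$ centered at $x_k$ are pairwise disjoint, and each lies inside $\T(\Sing, r_0)$ since $x_k \in \Sing$; hence $|\T(\Sing, r_0)| \geq \sum_k |B(x_k, r_0/2)| = \infty$, contradicting the bound above. So $\Sing$ is bounded and $K := \overline{\Sing}$ works.

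I do not expect a genuine obstacle here; the only subtlety worth flagging is that the finiteness of $|\T(\Sing, r_0)|$ on its own does not make $\T(\Sing, r_0)$ — still less $\Sing$ — bounded, since an unbounded open set can perfectly well have finite measure. The input that makes the argument go through is that $\T(\Sing, r_0)$ contains a ball of the \emph{fixed} radius $r_0$ about every point of $\Sing$, which converts unboundedness of $\Sing$ into the presence of infinitely many disjoint balls of a common positive volume inside a set of finite measure.
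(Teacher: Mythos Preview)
Your proof is correct and rests on the same core observation as the paper's: disjoint balls of a fixed radius centered on points of $\Sing$ lie inside a tube of finite Lebesgue measure, which bounds how many such balls there can be. The paper packages this constructively---a maximal $r/2$-separated net in $\T(\Sing,r)$ is finite by the volume bound, and the corresponding $r$-balls give a finite cover whose closure is the desired $K$---while you argue boundedness by contradiction and take $K=\overline{\Sing}$; the two are minor variants of one another.
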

\begin{proof}
Since $\Sing$ has finite $(n-\alpha)$-dimensional Minkowski content, by definition there exists some $c>0$ such that for all sufficiently small $r>0$, 
\begin{equation*}
|\T(\Sing, r)| \leq cr^{\alpha}.
\end{equation*}
Let $\{x_i\}_{i=1}^{\gamma}$ be a maximal set of points in $\T(\Sing, r)$ such that the balls  $\{B(x_i, r/2)\}_{i=1}^{\gamma}$ are pairwise disjoint. Then $\{B(x_i, r)\}_{i=1}^{\gamma}$ covers $\T(\Sing, r)$. Moreover, each $B(x_i, r/2)\subset \T(\Sing, 1.5r)$ so if $r>0$ is sufficiently small, we have
\begin{equation*}
\gamma \omega(n)(\tfrac{r}{2})^n = \sum_{i=1}^{\gamma} |B(x_i, \tfrac{r}{2})| \leq |\T(\Sing, 1.5r)| \leq  cr^{\alpha},
\end{equation*}
so $\gamma \leq c'r^{\alpha - n}$ for some $c'>0$, that is, $\gamma$ is finite. Thus, for $r$ sufficiently small,
\begin{equation*}
K_r := \bigcup_{i=1}^{\gamma} \overline{B(x_i, r)}
\end{equation*}
is a compact set containing $\T(\Sing, r)$ and hence $\Sing$.
\end{proof}

\subsection{Ricci and Ricci-DeTurck flow, and evolution of geometric quantities}

If $M$ is a smooth manifold and $(\tilde g_t)_{t\in (0, T)}$ is a smooth family of Riemannian metrics on $M$, recall that $\tilde g_t$ evolves by Ricci flow if
\begin{equation}\label{eq:RF}
\partial_t \tilde g_t = -2\Ric(\tilde g_t).
\end{equation}
We use the notation $\tilde g_t$ to distinguish this flow from the Ricci-DeTurck flow, which we use more often in this paper, and which we will denote by $g_t$. The Ricci-DeTurck flow, introduced by DeTurck in \cite{DeTurck83}, is a strongly parabolic flow that is related to the Ricci flow by pullback via a family of diffeomorphisms. More specifically, given a fixed smooth background metric $\bar g$, we define the following operator, which maps symmetric $2$-forms on $M$ to vector fields (\cite[(A.7)]{BamlerKleiner22}):
\begin{equation}\label{eq:Xoperator}
X_{\bar g}(g):= \sum_{i=1}^n(\nabla^{\bar g}_{e_i}e_i - \nabla^{g}_{e_i}e_i),
\end{equation}
where $\{e_i\}_{i=1}^n$ is any local orthonormal frame with respect to $g$. Then the Ricci-DeTurck equation is
\begin{equation}\label{eq:RDTF}
\partial_t g(t) = -2\Ric(g(t)) - \L_{X_{\bar g(t)}(g(t))}g(t),
\end{equation}
where $\bar g(t)$ is a smooth background Ricci flow. In this paper we often work with Ricci-DeTurck flows on $\R^n$ with respect to a Euclidean background, so we take $\bar g(t)\equiv \delta$ and (\ref{eq:RDTF}) becomes
\begin{equation}\label{eq:RDTFeucl}
\partial_t g(t) = -2\Ric(g(t)) - \L_{X_{\delta}(g(t))}g(t).
\end{equation}
\begin{remark}\label{rmk:parabolicscaling}
If $g_t$ solves (\ref{eq:RDTF}) with respect to some background Ricci flow $\bar g_t$ for $t\in (a,b)$, then, for all $\lambda >0$, the parabolically rescaled flow $g'_t:= \lambda g_{t/\lambda}$ solves (\ref{eq:RDTF}) with respect to the background Ricci flow $\bar g'_t := \lambda \bar g_{t/\lambda}$ for $t\in (\lambda a, \lambda b)$. If $\bar g_t \equiv \delta$ and $g(x,t)$ solves (\ref{eq:RDTFeucl}), then $g'(x,t) := g(x/\sqrt{\lambda}, t/\lambda)$ also solves (\ref{eq:RDTFeucl}).
\end{remark}

In what follows, we work with solutions $g_t$ to the Ricci-DeTurck equation (\ref{eq:RDTF}) with respect to some given background Ricci flow $\bar g_t$, such that $(g_t)_{t\in (0,T)}$ and $(\bar g_t)_{t\in (0,T)}$ are uniformly bilipschitz to some fixed smooth background metric $\bar g$ (often we take $\bar g = \bar g_0$ and in subsequent sections we take $\bar g_0 = \delta$, so that $\bar g_t = \delta$ for all $t$). In this setting, unless otherwise stated, all balls, distances, and norms are measured with respect to $\bar g$ and all derivatives of $g_t$ are taken using $\bar g_t$.

If $g_t$ solves (\ref{eq:RDTFeucl}) then $h_t(x) = h(x,t) := g_t(x) - \delta$ evolves by (\cite[(4.4)]{KochLamm12})
\begin{equation}\label{eq:hevolution}
\begin{split}
\partial_t h_{ij} &= \Delta h_{ij} + \frac{1}{2}(\delta + h)^{pq}(\delta + h)^{m\ell}\big( \nabla_i h_{pm}\nabla_j h_{q\ell} + 2\nabla_m h_{ip}\nabla_q h_{jm} -2\nabla_m h_{ip}\nabla_{\ell}h_{jp} 
\\& - 2\nabla_p h_{i\ell}\nabla_j h_{qm} -2\nabla_i h_{pm}\nabla_q h_{j\ell} \big) - \nabla_p((\delta + h)^{pq})\nabla_q h_{ij}
\\& + \nabla_p \big( ((\delta + h)^{pq} - \delta^{pq})\nabla_q h_{ij} \big)
\\&=: \Delta h_{ij} + Q^0[h] + \nabla^*Q^1[h]
\\& \text{ where}
\\& Q^0[h] = \frac{1}{2}(\delta + h)^{pq}(\delta + h)^{m\ell}\big( \nabla_i h_{pm}\nabla_j h_{q\ell} + 2\nabla_m h_{ip}\nabla_q h_{jm} -2\nabla_m h_{ip}\nabla_{\ell}h_{jp} 
\\& - 2\nabla_p h_{i\ell}\nabla_j h_{qm} -2\nabla_i h_{pm}\nabla_q h_{j\ell} \big) - \nabla_p((\delta + h)^{pq})\nabla_q h_{ij} = \nabla h * \nabla h,
\\& \nabla^*Q^1[h] = \nabla_p \big( ((\delta + h)^{pq} - \delta^{pq})\nabla_q h_{ij} \big) = \nabla (h * \nabla h),
\end{split}
\end{equation}
and where $\Delta$ denotes the usual Euclidean Laplacian. If $\bar K(x, t; y,s)$ is the Euclidean heat kernel, then the integral equation corresponding to (\ref{eq:hevolution}) with initial data $h_0$ is (see \cite[Section 2.1]{KochLamm12})
\begin{equation}\label{eq:Integraleq}
h(x,t) = \int_{\R^n} \bar K(x,t; y, 0)h_0(y)dy + \int_0^t \int_{R^n} \bar K(x,t;y,s) (Q^0[h](y,s) + \nabla^*Q^1[h](y,s))dyds.
\end{equation}

As mentioned, if $g(t)$ solves (\ref{eq:RDTF}) then it is related to a Ricci flow via pullback by diffeomorphisms. More precisely, if $g(t)$ solves (\ref{eq:RDTF}) on $(0,T)$ and $(\chi_t)_{t\in (0, T)}: M\to M$ is the family of diffemorphisms satisfying
\begin{equation}\label{eq:diffeoseq}
\begin{cases}
X_{\bar g(t)}(g(t))f &= \frac{\partial}{\partial t}(f\circ\chi_t) \text{ for all } f\in C^\infty(M)\\
\chi_{t_0} &= \id,
\end{cases}
\end{equation}
for some $t_0\in (0,T)$, then $\tilde g(t):= \chi_t^*g(t)$ solves (\ref{eq:RF}) and satisfies $\tilde g(t_0) = g(t_0)$ (when $g_t$ is a smooth solution to (\ref{eq:RDTF}) on $[0,T)$ then typically we take $t_0 = 0$, but that is not the setting addressed in this paper).

If $\tilde g_t$ evolves by the Ricci flow, the scalar curvature of $\tilde g_t$ satisfies (\cite[(2.5.5)]{Topping06})
\begin{equation}\label{eq:scalareveq}
\partial_t R(\tilde g_t) = \Delta^{\tilde g_t} R(\tilde g_t) + 2|\Ric|^2(\tilde g_t)
\end{equation}
and hence (\cite[(2.5.6), (2.5.7)]{Topping06})
\begin{equation}\label{eq:Ricciscalarev}
\partial_t R(\tilde g_t) \geq \Delta^{\tilde g_t} R(\tilde g_t) + \frac{2}{n}R(\tilde g_t)^2
\end{equation}
and the volume form of $\tilde g_t$ evolves by
\begin{equation}\label{eq:RicciVolev}
\partial_t d\tilde g_t = -R(\tilde g_t)d\tilde g_t.
\end{equation}
If $g_t$ evolves by Ricci-DeTurck flow (\ref{eq:RDTF}) then the scalar curvature of $g_t$ satisfies (\cite[(2.24)]{PBG19})
\begin{equation}\label{eq:RDTFscalarev}
\partial_t R(g_t)  \geq \Delta^{g_t} R(g_t) - \langle X_{\bar g_t}(g_t), \nabla R(g_t)\rangle + \frac{2}{n}R(g_t)^2,
\end{equation}
where $X$ is as in (\ref{eq:Xoperator}). It follows from the maximum principle (see, for example, \cite[Corollaries 3.2.2 and 3.2.5]{Topping06}) that 
\begin{equation}\label{eq:universalRbound}
R(g_t)\geq -n/(2t)
\end{equation} 
for all $t>0$ for which the flow is defined, and that if $t_1 < t_2$ and $R(g_{t_1}) \geq \alpha$ for some $\alpha \in \R$, then $R(g_{t_2}) \geq \alpha$ as well.

We will eventually make use of the following condition on the scalar curvature of time slices of the Ricci-DeTurck flow as $t\searrow 0$, first introduced in \cite{PBG19}:
\begin{definition}
Let $M$ be a smooth manifold and $(g_t)_{t\in (0, T]}$ be a smooth solution to the Ricci-DeTurck flow, with respect to a smooth background Ricci flow $(\bar g_t)_{t\in [0,T]}$. Suppose that $(\bar g_t)_{t\in [0,T]}$ is uniformly bilipschitz to a fixed smooth Riemannian metric $\bar g$ on $M$. For $\beta \in (0, 1/2)$, we say that $(g_t)_{t\in (0,T]}$ satisfies the $\beta$-weak condition at $x$ with the lower bound $\kappa_0$ for some $\kappa_0 \in \R$ if 
\begin{equation}\label{eq:betaweak}
\inf_{C>0}\left(\liminf_{t\searrow 0}\left(\inf_{B_{\bar g}(x, Ct^\beta)}R^{g_t}\right)\right) \geq \kappa_0.
\end{equation}
\end{definition}

We will show that there exists a Ricci-DeTurck flow for which this condition holds, and use this condition to deduce a global lower bound for the scalar curvature of time-slices of the Ricci-DeTurck flow, for positive times. Note that when $\kappa_0 = 0$ the condition (\ref{eq:betaweak}) is invariant under the parabolic rescaling described in Remark \ref{rmk:parabolicscaling}. 
 
Because the Ricci-DeTurck flow is strongly parabolic, it has a regularizing effect on the initial data:
\begin{theorem}\label{thm:KL+}
There exists $\varepsilon = \varepsilon(n)$ and $C = C(n)>0$ such that the following is true:

 Suppose that $g$ is a measurable Riemannian metric on $\R^n$ such that $|| g - \delta||_{L^\infty(\R^n)} < \varepsilon$, where $\delta$ denotes the Euclidean metric, and $|| \cdot||_{L^\infty(\R^n)}$ is measured with respect to $\delta$. Suppose that $g$ is smooth on $\R^n\setminus \Sing$, where $\Sing$ is some subset of $\R^n$. Then there exists a solution $(g_t)_{t>0}$ to the Ricci-DeTurck flow (with respect to the background metric $\delta$) such that:
 \begin{enumerate}
 \item $g_t$ is smooth for all $t>0$,
 \item for all $t>0$,
 \begin{equation}
 || g_t - \delta||_{C^0(\R^n)} \leq C|| g - \delta||_{L^\infty(\R^n)},
 \end{equation}
\item for all $k\in \mathbb{N}$ there exists $c(k)>0$ such that for all $t>0$,
 \begin{equation}\label{eq:RDTFderivests}
 || \nabla^k g_t||_{C^0(\R^n)}\leq \frac{c(k)|| g - \delta ||_{L^\infty}(\R^n)}{t^{k/2}},
 \end{equation}
 and
 \item $g_t$ converges smoothly to $g$ as $t\searrow 0$ on compact subsets of $\R^n\setminus \Sing$.
 \end{enumerate}
\end{theorem}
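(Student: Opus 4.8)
The plan is to obtain Theorem~\ref{thm:KL+} from the work of Koch--Lamm on geometric flows with rough initial data \cite{KochLamm12}, together with a localization argument that promotes their weak attainment of the initial data to smooth convergence wherever $g$ is smooth. For existence and the global estimates one works with the quasilinear system~(\ref{eq:hevolution}) for $h=g-\delta$ and its Duhamel form~(\ref{eq:Integraleq}). Koch--Lamm solve this by a contraction argument in a scale-invariant Banach space whose norm controls $\sup_t\|h(\cdot,t)\|_{L^\infty}$, $\sup_t\sqrt t\,\|\nabla h(\cdot,t)\|_{L^\infty}$, and a parabolic Morrey quantity controlling $R^{-n}\int_0^{R^2}\!\int_{B_R(x)}(|\nabla h|^2+R^{-2}|h|^2)$ uniformly in $x$ and $R$; it is the Morrey quantity that tames the quadratic terms $Q^0[h]=\nabla h*\nabla h$ and $\nabla^*Q^1[h]=\nabla(h*\nabla h)$ despite the borderline singularity $|\nabla h|\sim\varepsilon/\sqrt t$, and for $\|g-\delta\|_{L^\infty}$ below a dimensional threshold $\varepsilon(n)$ the relevant map is a contraction. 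This produces a solution $g_t=\delta+h_t$, unique in this class, smooth for $t>0$, satisfying~(\ref{eq:RDTFderivests}) (from interior parabolic estimates together with the scaling of Remark~\ref{rmk:parabolicscaling}), and attaining $g$ as $t\searrow0$ in an averaged sense (e.g.\ in $L^2_{\loc}$). This already gives part~(1) and a weak version of part~(2).

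For part~(2), fix a compact $K\subset\R^n\setminus\Sing$ and a bounded open $U$ with $K\subset\subset U\subset\subset\R^n\setminus\Sing$. Since $g$ is smooth on $\overline U$, a cutoff $\psi\in C_c^\infty(\R^n)$ with $\psi\equiv1$ on $U$ and $\supp\psi\subset\subset\R^n\setminus\Sing$ gives a globally smooth metric $g':=\psi g+(1-\psi)\delta$ with $\|g'-\delta\|_{L^\infty}\leq\|g-\delta\|_{L^\infty}<\varepsilon$ and $g'\equiv g$ on $U$. Smooth short-time existence for the strictly parabolic flow~(\ref{eq:RDTFeucl}) produces a smooth solution starting from $g'$; by the uniqueness above it coincides on a short time interval with the Koch--Lamm flow from $g'$, hence extends to all $t>0$, obeys~(\ref{eq:RDTFderivests}), and converges in $C^\infty_{\loc}(\R^n)$ to $g'$ as $t\searrow0$ — call it $(g'_t)_{t>0}$. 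In particular $g'_t\to g$ in $C^\infty$ near $K$, so it suffices to show $w_t:=g_t-g'_t\to0$ in $C^\infty(K)$.

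Subtracting the Duhamel formulas for $h=g_t-\delta$ and $h'=g'_t-\delta$,
\begin{equation*}
w(x,t)=\int_{\R^n}\bar K(x,t;y,0)\,(h-h')(y,0)\,dy+\int_0^t\!\!\int_{\R^n}\bar K(x,t;y,s)\,\big(\mathcal{Q}[h]-\mathcal{Q}[h']\big)(y,s)\,dy\,ds,
\end{equation*}
where $\mathcal{Q}=Q^0+\nabla^*Q^1$ and each term of $\mathcal{Q}[h]-\mathcal{Q}[h']$ carries a factor of $w$ or $\nabla w$, the remaining factors built from $h,h',\nabla h,\nabla h'$ and controlled by~(\ref{eq:RDTFderivests}). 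Since $(h-h')(\cdot,0)=g-g'$ vanishes on $U$, exponential decay of the Euclidean heat kernel makes the first term and all of its $x$-derivatives $O(t^{-N}e^{-c_0/t})$ uniformly for $x\in K$, for some $c_0>0$. The Duhamel term I would estimate in a localized version of the Koch--Lamm norm, taken over a family of neighborhoods of $K$ that shrink toward $K$ as $t\searrow0$ (keeping a fixed buffer inside $U$): on these neighborhoods the parabolic distance from $\R^n\setminus U$ scales like $t^{-1/2}$, so the exterior contribution stays exponentially small, while the interior self-coupling is, by scale-invariance, a small multiple of the localized norm of $w$ plus terms handled by the global estimates. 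This yields a closed estimate forcing the localized norm of $w$ to tend to $0$ as $t\searrow0$; differentiating the Duhamel formula and iterating with interior parabolic estimates for~(\ref{eq:hevolution}) then upgrades this to $C^k(K)$-convergence for every $k$. Combined with $g'_t\to g$ in $C^\infty$ near $K$, this gives $g_t\to g$ in $C^\infty(K)$, hence in $C^\infty_{\loc}(\R^n\setminus\Sing)$.

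The main obstacle is precisely this localized stability estimate. The coefficient multiplying $\nabla w$ in its evolution is controlled only by $\varepsilon/\sqrt s$, which is parabolically scale-invariant but not integrable in $s$; a Gr\"onwall argument using pointwise bounds alone diverges (as does $\int_0^t\varepsilon^2 s^{-1}\,ds$). The resolution is to keep the whole comparison inside the scale-invariant (Morrey) Koch--Lamm norms, localized near $K$, where the Duhamel operators associated with those singular coefficients have small operator norm for small $\varepsilon$, and to exploit the exponential decay of the heat kernel to isolate the part of $w$ that actually feels the vanishing difference of the initial data. Making the localized norms and the accompanying Duhamel estimates precise — so that the localization and the scale-invariant smoothing~(\ref{eq:RDTFderivests}) fit together and the bootstrap closes — is the technical heart of the argument.
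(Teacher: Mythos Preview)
Your strategy matches the paper's: invoke Koch--Lamm for existence and the global estimates, then for the local smooth convergence compare $g_t$ with the Koch--Lamm flow $g'_t$ from a globally smooth cutoff metric $g'=\psi g+(1-\psi)\delta$ (the paper calls it $g^V$), reducing to a localized stability estimate for $w_t=g_t-g'_t$ in the scale-invariant $X$-norm. The difference is in how that estimate is closed. You propose a direct argument with localized norms over shrinking neighborhoods, correctly flagging the borderline $\varepsilon/\sqrt s$ coupling as the obstacle and leaving the details of the localization somewhat schematic. The paper instead argues by contradiction and parabolic rescaling: if $|\nabla^k g_{t_i}|(x_i)\to\infty$ along $t_i\searrow0$, rescale by $c_i=i^{1/k}$ so the derivative is order one at the origin; then the rescaled cutoff $\varphi^i$ has $\|\nabla\varphi^i\|\sim c_i^{-1}\to0$, and the localized estimate $\|\varphi^i(g^i_t-\hat g^i_t)\|_{X_T}\le 2C\varepsilon\|\varphi^i(g^i_t-\hat g^i_t)\|_{X_T}+O(c_i^{-1})$ closes trivially because the boundary terms carry the small factor $\|\nabla\varphi^i\|$ rather than a fixed constant. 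A separate induction (Lemma~\ref{lemma:smoothRDTFderivbounds}) shows the Koch--Lamm flow from globally $C^k$ data has uniform $C^k$ bounds for short time, so the rescaled comparison flows flatten, contradicting the assumed blowup. The rescaling trick is exactly what sidesteps the difficulty you identify: it converts the fixed-scale boundary coupling into a large-scale problem where the cutoff gradient is small, so one never has to make the ``shrinking neighborhoods'' precise. Your direct route should also work, but would require more care in specifying the localized norms.
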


The existence of a smooth solution and the derivative estimates stated in Theorem \ref{thm:KL+} are due to Koch -- Lamm (cf. \cite[Theorem 4.3]{KochLamm12}), by taking $g_t := h_t + \delta$:
\begin{theorem}[\cite{KochLamm12}]\label{theorem:KL}
There exists $\varepsilon = \varepsilon(n)$ and $C = C(n)>0$ such that the following is true:

 Suppose that $g$ is a measurable Riemannian metric on $\R^n$ such that $|| g - \delta||_{L^\infty(\R^n)} < \varepsilon$, where $\delta$ denotes the Euclidean metric, and $|| \cdot||_{L^\infty(\R^n)}$ is measured with respect to $\delta$. Then there exists a solution $(h_t)_{t>0}$ to the integral equation (\ref{eq:Integraleq}) with initial data $g-\delta$ such that
 \begin{enumerate} 
 \item $|| g_t - \delta||_{X_\infty} \leq C|| g-\delta||_{L^\infty(\R^n)}$, where
 \begin{equation}
 \begin{split}
 ||h||_{X_T} & := \sup_{0 < t < T}|| h||_{L^\infty(\R^n)} 
 \\& + \sup_{x\in \R^n}\sup_{0 < r < \sqrt{T}}\left(r^{-n/2}|| \nabla h||_{L^2(B(x,r)\times(0,r^2))} + r^{2/(n+4)}|| \nabla h||_{L^{n+4}(B(x,r)\times (r^2/2, r^2))}\right),
 \end{split}
 \end{equation}            
 \item $g_t$ is smooth for all $t>0$, and for all $k\in \mathbb{N}$ there exists $c(k)>0$ such that for all $t>0$,                                                                 
 \begin{equation}\label{eq:RDTFderivests}
 || \nabla^k g_t||_{C^0(\R^n)}\leq \frac{c(k)|| g - \delta ||_{L^\infty}(\R^n)}{t^{k/2}},
 \end{equation}
and
\item $h_t$ is the unique solution of (\ref{eq:Integraleq}) with initial data $g-\delta$ in $\{h_t : || h_t ||_{X} \leq C\varepsilon\}$.
 \end{enumerate}
\end{theorem}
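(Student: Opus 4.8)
The approach is a Banach fixed-point argument for the integral equation (\ref{eq:Integraleq}) in the scaling-critical space $X_\infty$, following Koch--Tataru. Write $h_0 := g - \delta$, $S(t)f(x) := \int_{\R^n}\bar K(x,t;y,0)f(y)\,dy$, and $\Phi(h) := S(\cdot)h_0 + \mathcal{N}(h)$, where $\mathcal{N}(h)(x,t) := \int_0^t\int_{\R^n}\bar K(x,t;y,s)\big(Q^0[h] + \nabla^*Q^1[h]\big)(y,s)\,dy\,ds$ is the Duhamel term associated with the nonlinearity in (\ref{eq:hevolution}); a fixed point of $\Phi$ is exactly a solution of (\ref{eq:Integraleq}). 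The first step is the linear estimate $\|S(\cdot)h_0\|_{X_\infty}\leq C_0\|h_0\|_{L^\infty(\R^n)}$: the $\sup_t\|\cdot\|_{L^\infty}$ part is immediate because $\bar K(x,t;\cdot,0)$ is a probability density; the weighted $L^{n+4}$ part follows from $|\nabla S(t)h_0|\leq Ct^{-1/2}\|h_0\|_{L^\infty}$ together with a direct computation with the parabolic-cylinder weights; and the local $L^2$ part is the classical heat-semigroup characterization of $\mathrm{BMO}$, namely $\int_0^{r^2}\!\!\int_{B(x,r)}|\nabla S(s)f|^2\,dy\,ds\leq Cr^n\|f\|_{\mathrm{BMO}}^2\leq Cr^n\|f\|_{L^\infty}^2$.

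The central step is the nonlinear estimate $\|\mathcal{N}(h)-\mathcal{N}(\tilde h)\|_{X_\infty}\leq C_1\big(\|h\|_{X_\infty}+\|\tilde h\|_{X_\infty}\big)\|h-\tilde h\|_{X_\infty}$ for $h,\tilde h$ in the unit ball of $X_\infty$ (so that $(\delta+h)^{-1}$ is a convergent power series in $h$ and $Q^0[h]$, $Q^1[h]$ reduce, schematically, to $\nabla h*\nabla h$ and $h*\nabla h$ up to higher-order factors that are harmless in $L^\infty$). It thus suffices to estimate the model bilinear operators $B_0(f,g):=\int_0^t S(t-s)(\nabla f\cdot\nabla g)\,ds$ and $B_1(f,g):=\int_0^t S(t-s)\,\nabla^*(f\cdot\nabla g)\,ds$ in $X_\infty$ by $C\|f\|_{X_\infty}\|g\|_{X_\infty}$; one uses the Gaussian kernel bounds $|\nabla^j\bar K(x,t;y,s)|\leq C(t-s)^{-(n+j)/2}e^{-c|x-y|^2/(t-s)}$, Hölder's inequality, and the precise interplay between the local $L^2$-Carleson and weighted $L^{n+4}$ components of $X_\infty$ --- the latter being exactly what allows the extra derivative falling on the heat kernel in $B_1$ to be absorbed. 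This is the technical heart of the proof and the step I expect to be the main obstacle: $B_1$ is genuinely borderline, and it is the reason the $X_T$-norm is built the way it is. Granting these estimates, fix $\rho>0$ small with $C_1\rho<\tfrac12$ and then $\varepsilon>0$ small with $2C_0\varepsilon\leq\rho$ (shrinking further so that $\|h\|_{L^\infty}\leq\rho$ keeps $\delta+h$ uniformly positive definite); for $\|h_0\|_{L^\infty}<\varepsilon$ the map $\Phi$ preserves the complete ball $\{\|h\|_{X_\infty}\leq\rho\}$ and is a $\tfrac12$-contraction on it, so it has a unique fixed point $h$ there, with $\|h\|_{X_\infty}\leq 2\|S(\cdot)h_0\|_{X_\infty}\leq 2C_0\|h_0\|_{L^\infty}$. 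Choosing the constant $C$ in the statement so that $\{\|h_t\|_X\leq C\varepsilon\}$ is this ball yields items (1) and (3).

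It remains to prove the smoothing estimates (2). The fixed point $h$ solves the quasilinear parabolic system (\ref{eq:hevolution}), whose coefficients are smooth (indeed rational) functions of $h$ and which is uniformly parabolic once $\|h\|_{L^\infty}$ is small. By the parabolic rescaling of Remark \ref{rmk:parabolicscaling}, which leaves $\|h\|_{L^\infty}$ and $\|h\|_{X_\infty}$ invariant while rescaling $\nabla^k h$ by a factor $\lambda^{-k/2}$, it suffices to bound $\|\nabla^k h(\cdot,1)\|_{C^0}$ by $c(k)\|h_0\|_{L^\infty}$ on the unit time-slice. On a unit parabolic cylinder the $X_\infty$-bound supplies $\nabla h\in L^{n+4}$, hence $Q^0[h]\in L^{(n+4)/2}$ and $Q^1[h]\in L^{n+4}$; feeding this into interior parabolic estimates ($W^{2,1}_p$ estimates, with the divergence-form term $\nabla^*Q^1[h]$ handled by the corresponding estimates for divergence-form right-hand sides) and iterating --- upgrading the regularity of $h$, and hence of the coefficients, at each stage and closing with interior Schauder estimates --- yields $\|h\|_{C^k}\leq c(k)\|h_0\|_{L^\infty}$ on a slightly smaller cylinder for every $k$. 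Rescaling back gives (\ref{eq:RDTFderivests}), which completes the proof. (The additional statement, used later via Theorem \ref{thm:KL+}, that $g_t\to g$ in $C^\infty_{\loc}(\R^n\setminus\Sing)$ is not part of this theorem; it would require, on a ball where $g$ is smooth, comparing $g_t$ with the locally defined smooth Ricci--DeTurck flow from the smooth initial data and using that their discrepancy decays rapidly as $t\searrow 0$ by the Gaussian kernel bound, which beats the $t^{-k/2}$ loss in the interior estimates.)
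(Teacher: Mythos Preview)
Your outline is essentially correct and follows the Koch--Lamm argument (which in turn adapts the Koch--Tataru fixed-point scheme for Navier--Stokes): linear estimate for the heat semigroup in $X_\infty$, bilinear estimates for the Duhamel term exploiting the $L^{n+4}$ component of the norm, contraction mapping on a small ball, and then smoothing via rescaling plus interior parabolic regularity. Note, however, that the paper does not supply its own proof of this theorem at all; it is quoted as \cite[Theorem 4.3]{KochLamm12} and used as a black box, so there is nothing in the paper to compare your argument against beyond the citation itself.
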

\begin{remark}\label{rmk:Xnormscalinginvariance}
Note that the norm $|| \cdot||_{X_\infty}$ is invariant under the parabolic scaling described in Remark \ref{rmk:parabolicscaling}.
\end{remark}

It remains to address the convergence of the solution $g_t$ as $t\searrow 0$. This is due to the following result, which is proven in Appendix \ref{appendix:convergence}:
\begin{lemma}\label{lemma:SmoothConvergenceae}
Let $\varepsilon'$ be as in Lemma \ref{lemma:smoothRDTFderivbounds} and $\varepsilon$ be as in Theorem \ref{theorem:KL}. There exists some positive $\varepsilon'' = \varepsilon''(n) \leq \varepsilon' \leq \varepsilon$ such that the following is true:

Let $g$ be a $L^\infty$ metric on $\R^n$ that is $(1+\varepsilon'')$-bilipschitz to $\delta$. Suppose that $g$ is smooth on $\R^n\setminus \Sing$, where $\Sing$ is some bounded subset of $\R^n$. Then for any bounded open set $U$ such that $\dist(\overline{U}, \Sing)>0$, any $N\in (0, \infty)$, and any $k\in \mathbb{N}$ there exists $C_k := C_k(n, N, g, \overline{U}) >0$ and $T_k = T_k(n, N, g, \overline{U})>0$ such that if $|| g - \delta||_{C^k(\overline{U})} \leq N$ then for all $t\in [0, T_k]$ we have $|| \nabla^k g_t||_{C^0(\overline{U})} \leq C_k$, where $g_t := h_t + \delta$ and $(h_t)_{t>0}$ is the solution to the integral equation (\ref{eq:Integraleq}) with initial data $g-\delta$ given by Theorem \ref{theorem:KL}.
\end{lemma}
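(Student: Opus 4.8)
The plan is to \emph{localize} the estimate to a single point $x_0\in\overline U$ that lies at positive distance from $\Sing$ — so $g$ is smooth there — and to control $\nabla^k g_t$ near $x_0$ by comparing $(g_t)_{t>0}$ with the Ricci--DeTurck flow of a \emph{globally} smooth metric $\hat g$ that agrees with $g$ in a neighborhood of $x_0$. Because the two flows differ only through initial data supported far from $x_0$, the Gaussian decay of the Euclidean heat kernel will make that difference — together with all of its derivatives — negligible on a small ball about $x_0$, uniformly down to $t=0$. Concretely, set $d_0:=\dist(\overline U,\Sing)>0$ and fix $\rho>0$ with $8\rho<d_0$. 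Since $\overline U$ is compact it is covered by finitely many balls $B(x_i,\rho)$ with $x_i\in\overline U$, and for each of them $B(x_i,4\rho)$ is disjoint from $\Sing$, so $g$ is smooth on $B(x_i,4\rho)$ with all Euclidean $C^m$-norms there finite (and with the relevant ones controlled by $N$ on $\overline U$). It therefore suffices to bound $\|\nabla^k g_t\|_{C^0(B(x_0,\rho))}$ for $t\in[0,T_k]$ for one such $x_0$.

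Next I would fix $x_0$, pick a cutoff $\phi$ with $\phi\equiv1$ on $B(x_0,2\rho)$ and $\supp\phi\subset B(x_0,3\rho)$, and set $\hat g:=\phi\,g+(1-\phi)\,\delta$. Then $\hat g$ is a smooth metric on $\R^n$ with bounded geometry, equal to $g$ on $B(x_0,2\rho)$ and to $\delta$ off $B(x_0,3\rho)$; being a pointwise convex combination of two $(1+\varepsilon'')$-bilipschitz metrics it is itself $(1+\varepsilon'')$-bilipschitz to $\delta$, so $\|\hat g-\delta\|_{L^\infty}$ is $O(\varepsilon'')$, hence $<\varepsilon'\le\varepsilon$ for $\varepsilon''$ small. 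Thus Theorem~\ref{theorem:KL} produces a solution $\hat g_t=\hat h_t+\delta$ of (\ref{eq:Integraleq}) with data $\hat g-\delta$ and $\|\hat h\|_{X_\infty}=O(\varepsilon'')$, and — $\hat g$ being smooth with bounded geometry — Lemma~\ref{lemma:smoothRDTFderivbounds} applied to $\hat g$ gives $\|\nabla^k\hat h_t\|_{C^0(\overline U)}\le C'_k$ for all $t\in[0,T'_k]$.

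Now let $h_t:=g_t-\delta$ be the Koch--Lamm solution from $g$, set $v_t:=h_t-\hat h_t$, and subtract the two instances of (\ref{eq:Integraleq}): $v_t=E_t+\int_0^t e^{(t-s)\Delta}(N[h_s]-N[\hat h_s])\,ds$, where $N[h]:=Q^0[h]+\nabla^*Q^1[h]$ and $E_t:=e^{t\Delta}(g-\hat g)$. Since $g-\hat g$ vanishes on $B(x_0,2\rho)$ and $|g-\hat g|=O(\varepsilon'')$ everywhere, integrating the Gaussian tail bound for $\nabla^k_x\bar K(x,t;y,0)$ (valid because $g-\hat g\equiv0$ on $B(x,\rho)$ for $x\in B(x_0,\rho)$) gives $\|\nabla^k E_t\|_{C^0(B(x_0,\rho))}\le C_k\,\varepsilon''\,t^{-k/2}e^{-\rho^2/(8t)}$, which tends to $0$ as $t\searrow0$ and is bounded on $[0,T]$ for every $k$; moreover $v-E$ has vanishing initial data. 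Expanding $N[h]-N[\hat h]$ with $h=\hat h+v$, every term carries at least one factor of $v$ or $\nabla v$, the other factors being either smooth and bounded on $B(x_0,2\rho)$ (those built from $\hat h$) or controlled in the Koch--Lamm space (those built from $h$), with the quasilinear second-order contribution of the form ($(\hat h+v)$- or $(\delta+h)^{-1}$-type coefficient) times $\nabla^2v$, of coefficient size $O(\varepsilon'')$. Consequently, as in the Koch--Lamm fixed-point argument — using the smallness of $\varepsilon''$ for the genuinely nonlinear terms and the extra power of $T$ gained on integrating $\nabla^*$ for the linear terms with bounded (but not small) $\hat h$-coefficients — the map $v\mapsto E+\int_0^{\cdot}e^{(\cdot-s)\Delta}(N[\hat h+v]-N[\hat h])\,ds$ is a contraction on a small ball of $X_T$ for $T=T(n,g,\overline U)$ small. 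Since the $X_T$-norm is dominated by the $X_\infty$-norm and $\|h-\hat h\|_{X_\infty}=O(\varepsilon'')$, this identifies $v=h-\hat h$ (after shrinking $\varepsilon''$) and supplies $\nabla v\in L^{n+4}_{\loc}$. Feeding this into a standard localized parabolic bootstrap on nested balls $B(x_0,(1+2^{-j})\rho)$ — absorbing the small second-order term and using that $\hat h$ and $E$ are already smooth there while $v-E$ starts from zero data — upgrades $v$ to $C^0,C^1,\dots,C^k$ on $B(x_0,\rho)$ with bounds uniform for $t\in[0,T_k]$; uniformity down to $t=0$ holds precisely because the rough part of the data has been moved, via $\hat g$, outside $B(x_0,2\rho)$, where it contributes only the exponentially small, smooth quantity $E$. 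Then $\|\nabla^k g_t\|_{C^0(B(x_0,\rho))}\le\|\nabla^k\hat h_t\|_{C^0}+\|\nabla^k v_t\|_{C^0}\le C'_k+C''_k=:C_k$ for $t\in[0,T_k]$, and taking maxima of the $C_k$'s and minima of the $T_k$'s over the finite cover proves the lemma.

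I expect the main obstacle to be this last bootstrap: establishing the local $C^k$ bounds for $v$ \emph{uniformly as $t\searrow0$}. One must control the quasilinear term $(\hat h+v)*\nabla^2v$ and the quadratic gradient term $\nabla v*\nabla v$ inside $N[h]-N[\hat h]$ using only the $X_T$-control of $v$ near $t=0$ together with the already-established smoothness of $\hat h$ and $E$, and verify that no step of the iteration degenerates as $s\to0$ — equivalently, that deleting the initial data outside $B(x_0,2\rho)$ genuinely decouples the flow near $x_0$ from $\Sing$. By contrast, checking that $\hat g$ remains $(1+\varepsilon'')$-bilipschitz, that the contraction ball can be taken to contain $h-\hat h$, and that the finite-cover bookkeeping closes, are all routine once $\varepsilon''$ is chosen small.
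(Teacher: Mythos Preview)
Your approach and the paper's share the same core idea: construct a globally smooth comparison metric $\hat g$ (the paper calls it $g^V$) by cutting off $g$ to $\delta$ outside a neighborhood of $\overline U$, apply Lemma~\ref{lemma:smoothRDTFderivbounds} to the flow of $\hat g$, and then transfer the resulting bounds to $g_t$ by showing the two flows stay close near $\overline U$.

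Where the executions diverge is at the comparison step. The paper argues by contradiction and rescaling: if $|\nabla^k g_{t_i}(x_i)|>i$ along a sequence $t_i\to0$, one parabolically rescales about $(x_i,t_i)$ by factors $c_i=i^{1/k}$ so that $|\nabla^k g^i|(0,c_i^2t_i)>1$, extracts $C^\infty_{\loc}$ subsequential limits $g^\infty_t$ and $\hat g^\infty_t$ of the rescaled flows via the Koch--Lamm derivative estimates, and shows by a single $X_T$-stability estimate for $\varphi^i(g^i_t-\hat g^i_t)$ (with $\varphi^i$ the rescaled cutoff) that $g^\infty_t\equiv\hat g^\infty_t$. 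Since the derivative bounds for $\hat g^i_t$ from Lemma~\ref{lemma:smoothRDTFderivbounds} scale to zero, $\hat g^\infty_t$ is spatially constant, contradicting $|\nabla^k g^\infty_{t_\infty}|(0)\geq1$. Crucially, the entire comparison happens at the level of the zeroth-order $X_T$-norm; no local $C^k$ bootstrap for the difference is ever needed, because the localization is achieved by sending $\supp\nabla\varphi^i$ to infinity under rescaling.

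Your direct route replaces this soft argument with a quantitative bootstrap for $v=h-\hat h$ up to $C^k$ on nested balls. You correctly flag this as the main obstacle, and it is not quite ``standard'': the equation for $v$ carries a $\nabla v*\nabla v$ term and a second-order term with $O(\varepsilon'')$ coefficient, and to get $C^k$ control uniform to $t=0$ you must simultaneously handle the commutator terms supported on the annuli $\supp\nabla\psi_j$ (where only $X_T$-type control is available) and the near-$t=0$ behavior of the Duhamel integral, which in this framework forces you back into the $Y^0_T/Y^1_T$ machinery at each order of differentiation rather than invoking interior Schauder theory. This can probably be pushed through by mimicking the inductive scheme in the paper's proof of Lemma~\ref{lemma:smoothRDTFderivbounds} with cutoffs inserted, but it is genuine work; the paper's compactness argument buys you precisely the ability to skip it.
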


\begin{proof}[Proof of Theorem \ref{thm:KL+}]
Let $\varepsilon''$ be the constant from Lemma \ref{lemma:SmoothConvergenceae}. Set $\varepsilon = \varepsilon''$. Let $g$ be as in the statement of the theorem. By Theorem \ref{theorem:KL} there exists a solution $(h_t)_{t>0}$ to the integral equation (\ref{eq:Integraleq}) with initial data $g-\delta$. For $t>0$ let $g_t := h_t + \delta$, so that $(g_t)_{t>0}$ solves (\ref{eq:RDTFeucl}) for $t>0$. Let $K\subset \R^n\setminus \Sing$ be a compact subset, so that $r:= \dist(K,\Sing) >0$. Note that $\{B(x,r/2): x\in K\}$ is an open cover of $K$, so there exists a finite subcover $\{B(x_1, r/2), \ldots, B(x_m, r/2)\}$ for some $x_1, \ldots, x_m \in K$. Let 
\begin{equation*}
U := \cup_{i=1}^{m} B(x_i, r/2).
\end{equation*}
Then $U$ is a bounded open set and $\dist(\overline{U}, \Sing) \geq \dist(K, \Sing) - r/2 = r/2 >0$. Now fix $k\in \{0\}\cup \mathbb{N}$. By Lemma \ref{lemma:SmoothConvergenceae}, for $m = 1,\ldots, k+2$ there exists $C_{m} > 0$ and $T_{m}>0$ such that $||\nabla^{m} g_t||_{C^0(\overline{U})} \leq C_m$ for all $t\in [0, T_{m}]$. Let $T = \min\{T_1, \ldots, T_m\}$. Then, for all $x\in K$ and $t\in (0, T)$,
\begin{equation*}
|\nabla^k g_t(x) - \nabla^k g(x)| \leq \int_0^t |\partial_s \nabla^k g_s(x)| ds \leq C(C_1, \ldots, C_{k+2})t,
\end{equation*}
where in addition to Lemma \ref{lemma:SmoothConvergenceae} we have used (\ref{eq:hevolution}), (\ref{eq:higherorderQ0}), and (\ref{eq:higherorderQ1}). Therefore,
\begin{equation*}
|| \nabla^k g_t - \nabla^k g||_{C^0(K)} \xrightarrow[t\searrow 0]{} 0.
\end{equation*}
\end{proof}

\subsection{Heat kernel estimates}
Throughout, let $X_{\bar g_t}(g_t)$ denote the usual Ricci-DeTurck vector field, as in (\ref{eq:Xoperator}).

The \emph{standard heat kernel} for the Ricci-DeTurck flow, $\Phi(x,t;y,s)$, is \cite[Definition 23.1, 24.1]{ChowEtAl10} the minimal positive solution of
\begin{equation}\label{eq:RDTFheat}
\begin{cases}
\partial_t \Phi(x,t;y,s) &= \Delta_{g_t, x}\Phi(x,t;y,s) - \partial_{X_{\bar g_t}(g_t)}\Phi(x,t;y,s)\\
\partial_s \Phi(x,t;y,s) &= - \Delta_{g_s, y}\Phi(x,t;y,s) + \partial_{X_{\bar g_s}(g_s)}\Phi(x,t;y,s) +R^{g_s}\Phi(x,t;y,s)\\
K(\cdot, t;y,s) &\to \delta_y \text{ as } t\searrow s\\
K(x, t;\cdot,s) &\to \delta_x \text{ as } s\nearrow t.
\end{cases}
\end{equation}

\begin{theorem}[cf. \cite{Bamler16}]\label{thm:standardRDTFHK}

For any $0 < c < \infty$ and $n\in \mathbb{N}$ there exist constants $C_1 = C_1(n,c,b_1, b_2 v_0)$, $D_1 = D_1(n,c,b_1, b_2)$, $C_2 = C_2(n,c,b_1, b_2, v_0, \kappa)$, and $D_2 = D_2(n,c,b_1, b_2, \kappa)$ such that the following is true:

Let $(M^n, \bar g)$ be a smooth Riemannian manifold such that $\Ric(\bar g) \geq \kappa$ for some $\kappa\in \R$ and such that $\inf_{x\in M, 0 < r < 1}|B(x, r)|_{\bar g}/r^n > v_0$ for some $v_0 > 0$. Suppose that $(\bar g_t)_{t\in [0,1]}$ is a smooth Ricci flow on $M$ such that for some $b_1>0$ and for all $t\in [0,1]$, $\bar g_t$ is $(1+b_1)$-bilipschitz to $\bar g$. Let $0 < \theta \leq 1$ and suppose that $(g_t)_{t\in [\tfrac{\theta}{2}, \theta]}$ is a smooth solution to the Ricci-DeTurck equation on $M$ with respect to the background Ricci flow $\bar g_t$, such that for some $b_2>0$, $g_t$ is $(1+b_2)$-bilipschitz to $\bar g_t$ for $t\in [\tfrac{\theta}{2}, \theta]$. Also suppose that for all $t\in [\tfrac{\theta}{2}, \theta]$,  $|(\nabla^{\bar g_t})^{k}(g_t)|_{\bar g_t} \leq c/t^{k/2}$ for $k = 1,2$. Let $\Phi(x,t;y,s)$ denote the heat kernel for the Ricci-DeTurck flow on $\R^n\times [\tfrac{\theta}{2}, \theta]$ as in (\ref{eq:RDTFheat}). Then, for all $x,y\in \R^n$ and $s,t\in [\tfrac{\theta}{2}, \theta]$ with $s < t$, we have
\begin{equation}\label{eq:standardRDTFHKptwisebound}
\Phi(x,t; y,s) \leq \frac{C_1}{(t-s)^{n/2}}\exp\left(-\frac{|x-y|^2_{\bar g}}{D_1(t-s)}\right),
\end{equation}
\begin{equation}
\int_{M}\Phi(x,t;y,s)d_{g_s}(y) = 1,
\end{equation}
and
\begin{equation}\label{eq:standardHKintegraloutsideball}
\int_{M\setminus B_{\bar g}(x, r)} \Phi(x,t;y,s)d_{g_s}(y) \leq C_2\exp\left(-\frac{r^2}{D_2(t-s)}\right)
\end{equation}
\end{theorem}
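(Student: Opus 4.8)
The plan is to reduce to a single time scale by parabolic rescaling, to recast the forward equation satisfied by $\Phi$ as a uniformly parabolic equation relative to the fixed metric $\bar g$ with bounded drift, and then to run the standard Gaussian upper bound machinery --- parabolic Moser iteration together with Davies' exponential perturbation trick --- for which the required functional inequalities are supplied by the Ricci lower bound and the volume noncollapsing of $\bar g$ at unit scale. This is essentially the argument of \cite{Bamler16}; the two new features are that the interval $[\tfrac{\theta}{2},\theta]$ is allowed to be short and that the background metric is a general $\bar g$ rather than $\delta$. First I would apply Remark \ref{rmk:parabolicscaling} with $\lambda=\theta^{-1}$ to reduce to $\theta=1$, so that the flow lives on $[\tfrac12,1]$. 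All hypotheses are scale invariant or improve under this rescaling: the $(1+\varepsilon)$- and $(1+b)$-bilipschitz conditions are scale invariant; the bounds $|(\nabla^{\bar g_t})^k(g_t)|_{\bar g_t}\le c\,t^{-k/2}$ are scale invariant (the scaling weights of $(\nabla^{\bar g_t})^k$, of $g_t$, and of $t^{-k/2}$ cancel); the inequality $\Ric(\bar g)\ge\kappa$, read as $\Ric(\bar g)\ge\kappa\bar g$, becomes $\Ric(\lambda\bar g)\ge\kappa\theta\,(\lambda\bar g)$ with $\kappa\theta\ge\min(\kappa,0)$ since $\theta\le1$; and the unit-scale volume ratio is preserved, since a $\lambda\bar g$-ball of radius $r<1$ is a $\bar g$-ball of radius $r\sqrt\theta<1$ and $|B(x,r)|_{\lambda\bar g}/r^n=|B(x,r\sqrt\theta)|_{\bar g}/(r\sqrt\theta)^n>v_0$. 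So it suffices to treat $\theta=1$.

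With $\theta=1$, in any $\bar g$-chart the forward operator $u\mapsto\partial_t u-\Delta_{g_t}u+\partial_{X_{\bar g_t}(g_t)}u$ takes the form $\partial_t u-\tfrac{1}{\sqrt{\det g_t}}\,\partial_i(\sqrt{\det g_t}\,g_t^{ij}\partial_j u)+X^i\partial_i u$: a divergence-form parabolic operator whose coefficient matrix $g_t^{ij}$ is uniformly elliptic relative to $\bar g$ with constants controlled, through $g_t\sim\bar g_t\sim\bar g$, by $\varepsilon$ and $b$; whose density $\sqrt{\det g_t}$ is comparable to $1$ with the same constants; and whose drift satisfies $|X_{\bar g_t}(g_t)|_{\bar g_t}\le C(n)\,|\nabla^{\bar g_t}g_t|_{\bar g_t}\le C(n)\,c\sqrt2$ on $[\tfrac12,1]$. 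Since $\bar g$ is complete with $\Ric(\bar g)\ge\min(\kappa,0)$ and $|B(x,r)|_{\bar g}>v_0r^n$ for $0<r<1$, Bishop--Gromov and the Buser-type Poincar\'e inequality give volume doubling and the $L^2$-Poincar\'e inequality on $\bar g$-balls of radius $\le1$, which is all that is needed, since after the reduction the relevant parabolic scale is $\sqrt{t-s}\le1/\sqrt2<1$. Parabolic Moser iteration for this operator then yields $\Phi(x,t;x,s)\lesssim|B(x,\sqrt{t-s})|_{\bar g}^{-1}\le v_0^{-1}(t-s)^{-n/2}$, which gives the on-diagonal bound with the constant $C_1$; and Davies' perturbation method --- conjugating the equation by $e^{\lambda\psi}$ for $\psi$ approximating the $\bar g$-distance to $y$ with $|\nabla^{\bar g}\psi|_{\bar g}\le1$, running the perturbed energy estimate, and optimizing in $\lambda$ --- upgrades this to (\ref{eq:standardRDTFHKptwisebound}), the cost of the conjugation being $\lesssim\lambda^2$ times the ellipticity constant (so $D_1$ depends only on $n,c,b$) and the drift contributing only a harmless factor since $t-s\le1$.

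The identity $\int_M\Phi(x,t;y,s)\,d_{g_s}(y)=1$ is conservativeness of this operator: $\Phi$ being the minimal positive fundamental solution gives $v(x,t):=\int_M\Phi(x,t;y,s)\,d_{g_s}(y)\le1$, with equality unless mass escapes to infinity, and $\bar g$ has at most exponential volume growth by $\Ric(\bar g)\ge\min(\kappa,0)$, which by Grigor'yan's criterion rules this out; equivalently, $v$ and the constant $1$ both solve the forward equation $\partial_t v=\Delta_{g_t}v-\partial_{X_{\bar g_t}(g_t)}v$ with the same limit $1$ as $t\searrow s$, so $v\equiv1$ by uniqueness of bounded solutions. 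The outside-ball estimate (\ref{eq:standardHKintegraloutsideball}) then follows by integrating (\ref{eq:standardRDTFHKptwisebound}) over the dyadic annuli $B_{\bar g}(x,2^{k+1}r)\setminus B_{\bar g}(x,2^kr)$, bounding each volume by Bishop--Gromov (this is where $D_2$ gains its dependence on $\kappa$, and $C_2$ its dependence on $v_0$ through $C_1$), and summing the resulting convergent series in $k$.

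I expect the main obstacle to be the bookkeeping in the Gaussian upper bound step: one must verify that the Moser iteration and the Davies perturbation, carried out for the genuinely time-dependent and merely bounded-measurable operator $\Delta_{g_t}-\partial_{X_{\bar g_t}(g_t)}$, produce constants depending only on the ellipticity ratio, the drift bound, and the unit-scale doubling and Poincar\'e constants of $\bar g$, and that the reduction to $\theta=1$ genuinely controls all of these in terms of $n,c,b,v_0$ and $\min(\kappa,0)$. This is precisely the content of the heat kernel estimates in \cite{Bamler16}, from which the present statement can be deduced with the modifications indicated above.
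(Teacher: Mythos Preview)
Your proposal is correct and follows essentially the same approach as the paper, which simply cites \cite[Lemma 2.9, Corollary 2.10]{PBG19} and \cite[Lemma 3 and p.3]{Bamler16} for the Gaussian bound and then observes that (\ref{eq:standardHKintegraloutsideball}) follows from (\ref{eq:standardRDTFHKptwisebound}) by integration against a metric with a Ricci lower bound. Your sketch is a faithful unpacking of those references: the parabolic rescaling to $\theta=1$, the recasting of $\partial_t-\Delta_{g_t}+\partial_{X}$ as a uniformly parabolic divergence-form operator with bounded drift relative to $\bar g$, Moser plus Davies for the Gaussian bound, and Bishop--Gromov on dyadic annuli for the outside-ball estimate are exactly what those cited arguments do.
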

\begin{proof}
The proof is similar to \cite[Lemma 2.9, Corollary 2.10]{PBG19} and \cite[Lemma 3 and p.3]{Bamler16}. The estimate (\ref{eq:standardHKintegraloutsideball}) is due to the super-exponential decay (\ref{eq:standardRDTFHKptwisebound}) and the fact that $g_s$ is uniformly bilipschitz to $\bar g$, which has a lower Ricci bound.
\end{proof}

\begin{theorem}\label{thm:nonstandardHK}
Let $(M^n, \bar g)$ be a smooth Riemannian manifold such that $\Ric(\bar g) \geq \kappa$ for some $\kappa\in \R$ and such that $\inf_{x\in M, 0 < r < 1}|B(x, r)|_{\bar g}/r^n > v_0$ for some $v_0 > 0$. Suppose that $(\bar g_t)_{t\in [0,1]}$ is a smooth Ricci flow on $M$ such that for some $b_1>0$ and for all $t\in [0,1]$, $\bar g_t$ is $(1+b_1)$-bilipschitz to $\bar g$. Suppose that $(g_t)_{t\in (0,1]}$ is a smooth solution to the Ricci-DeTurck flow on $M$ with respect to the background Ricci flow $\bar g_t$, such that for some $b_2>0$, $g_t$ is $(1+b_2)$-bilipschitz to $\bar g_t$, for all $t\in (0,1]$. Suppose that there exists $c>0$ such that $|\Rm(g_t)|\leq c/t$ for all $t\in (0,1]$.

Then, for any $p\in \R^n$, there exists a backwards heat kernel $u \in C^{\infty}(M \times (0,1))$ satisfying
\begin{equation}\label{eq:BackwardsHK}
\begin{cases}
-\partial_t u &= \Delta_{g_t} u - \partial_{X_{\bar g_t}(g_t)} u - (1-\tfrac{2}{n})R^{g_t}u\\
u(\cdot,t) & \xrightarrow[]{t \nearrow 1} \delta_p.
\end{cases}
\end{equation}
Moreover, there exists $C = C(n, c)$ such that if $0 < c_0 \leq n/2$ is some constant such that $ R(g_t)\geq -c_0/t$ for all $t\in (0, 1]$, then, for $0 < t \leq 1/2$,
\begin{equation}\label{eq:universalBackwardsHKbound}
u(\cdot, t) \leq Ct^{-(1-2/n)c_0}.
\end{equation}
\end{theorem}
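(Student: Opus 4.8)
The plan is to construct $u$ by reversing time and appealing to standard linear parabolic theory, and then to derive (\ref{eq:universalBackwardsHKbound}) by absorbing the factor $t^{(1-2/n)c_0}$ into $u$: this turns the potential $(1-\tfrac2n)R^{g_t}$, which is only bounded below (by $-(1-\tfrac2n)c_0/t$), into a genuinely nonnegative potential, after which $u$ can be compared against the heat kernel of the potential-free drift-Laplacian, for which Gaussian bounds are available.

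For existence, set $\sigma = 1-t$, so that (\ref{eq:BackwardsHK}) becomes the forward linear equation
\begin{equation*}
\partial_\sigma u = \Delta_{g_{1-\sigma}}u - \partial_{X_{\bar g_{1-\sigma}}(g_{1-\sigma})}u - \Big(1-\tfrac2n\Big)R^{g_{1-\sigma}}u.
\end{equation*}
On each slab $M\times[0,1-\tau]$ this is uniformly parabolic with smooth, bounded coefficients — the bounds coming from the bilipschitz hypothesis, the curvature bound $|\Rm(g_t)|\le c/t$ (hence $|R^{g_t}|\le C(n)c/\tau$ on the slab), and the fact that $g_t$ is bilipschitz to $\bar g_t$, which is bilipschitz to $\bar g$, so the underlying geometry is uniformly non-collapsed with a lower Ricci bound. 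Standard parabolic theory on manifolds of bounded geometry then produces a smooth positive minimal fundamental solution with pole at $(p,1)$; letting $\tau\searrow0$ and using consistency on overlaps yields $u\in C^\infty(M\times(0,1))$ solving (\ref{eq:BackwardsHK}) with $u(\cdot,t)\to\delta_p$ as $t\nearrow1$, since the coefficients are bounded near $t=1$.

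For the bound, fix $c_0\in(0,n/2]$ with $R(g_t)\ge -c_0/t$ on $(0,1]$ and set $\tilde u(y,t):=t^{(1-2/n)c_0}u(y,t)$. Differentiating and using (\ref{eq:BackwardsHK}) gives
\begin{equation*}
-\partial_t\tilde u = \Delta_{g_t}\tilde u - \partial_{X_{\bar g_t}(g_t)}\tilde u - V\tilde u,\qquad V:=\Big(1-\tfrac2n\Big)\Big(R^{g_t}+\tfrac{c_0}{t}\Big)\ge 0,
\end{equation*}
and $\tilde u(\cdot,t)\to\delta_p$ as $t\nearrow1$ because $t^{(1-2/n)c_0}\to1$. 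Since $\tilde u\ge0$ and $V\ge0$, the zeroth-order term is a sink: $-\partial_t\tilde u\le\Delta_{g_t}\tilde u-\partial_{X_{\bar g_t}(g_t)}\tilde u$, which in the variable $\sigma=1-t$ says $\tilde u$ is a subsolution of the (forward) heat equation for the drift-Laplacian $\Delta_{g_{1-\sigma}}-\partial_{X_{\bar g_{1-\sigma}}(g_{1-\sigma})}$ with initial data $\delta_p$ at $\sigma=0$. The comparison principle — legitimate on the noncompact $M$ because $\mathcal G(p,1;\cdot,t)$ has Gaussian decay and $\tilde u$ inherits spatial decay from the construction of $u$ — then gives $\tilde u(y,t)\le \mathcal G(p,1;y,t)$, where $\mathcal G$ is the heat kernel of $\partial_t(\cdot)=\Delta_{g_t}(\cdot)-\partial_{X_{\bar g_t}(g_t)}(\cdot)$, i.e.\ (\ref{eq:RDTFheat}) with the $R^{g_s}$ term deleted. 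By the Gaussian upper bound of Theorem \ref{thm:standardRDTFHK} — whose proof applies to $\mathcal G$, the missing potential only simplifying matters — one has $\mathcal G(p,1;y,t)\le C_1(1-t)^{-n/2}\exp\big(-|p-y|^2_{\bar g}/(D_1(1-t))\big)$ with $C_1,D_1$ depending only on $n,c$ and the geometry constants. For $0<t\le1/2$ this is at most $2^{n/2}C_1=:C(n,c)$, so $u(y,t)=t^{-(1-2/n)c_0}\tilde u(y,t)\le Ct^{-(1-2/n)c_0}$, which is (\ref{eq:universalBackwardsHKbound}).

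The main obstacle is the heat kernel input at the last step: to apply the bounds of Theorem \ref{thm:standardRDTFHK} to $\mathcal G(p,1;\cdot,t)$ for small $t$ one must control the coefficients — notably $|\nabla^{\bar g_t}g_t|$ and $|X_{\bar g_t}(g_t)|$ — down to small times, whereas the hypotheses only supply $|\Rm(g_t)|\le c/t$. This is bridged by interior smoothing estimates for the strictly parabolic Ricci-DeTurck flow, which upgrade the curvature bound together with the bilipschitz bound to $|\nabla^{\bar g_t}g_t|\le c'(n,c)/\sqrt{t}$ (and analogously at higher order) at every time, so that the hypotheses of Theorem \ref{thm:standardRDTFHK} hold at every parabolic scale; carrying these estimates out uniformly as $t\searrow 0$, and making the comparison principle on a noncompact manifold fully rigorous, is where the technical work concentrates.
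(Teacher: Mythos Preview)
Your argument is correct in spirit and is close to the paper's, but the organization differs. The paper first pulls back to the associated Ricci flow $\tilde g_t=\chi_t^*g_t$ via the DeTurck diffeomorphisms (thereby eliminating the drift $\partial_X$), constructs the backward kernel $\tilde u$ there using the fundamental-solution theory of Chow et al., obtains the Gaussian bound (\ref{eq:preliminaryHKestimate}) only on $[1/2,1]$, and then cites \cite[Theorem~2.3]{BamlerChen23} as a black box for the estimate (\ref{eq:universalBackwardsHKbound}); finally it transfers back by $u(x,t)=\tilde u(\chi_t^{-1}(x),t)$. The heart of the Bamler--Chen argument is precisely your multiplicative substitution $t^{(1-2/n)c_0}u$, which renders the zeroth-order coefficient nonnegative --- so you have in effect unpacked what the paper cites, while choosing to stay in the Ricci-DeTurck gauge. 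Passing to Ricci flow buys the paper a drift-free operator, so that no control on $|\nabla^{\bar g_t}g_t|$ or $|X|$ is needed for the maximum-principle step; your route keeps the drift but is more self-contained.

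Your detour through the auxiliary kernel $\mathcal G$, and the ``main obstacle'' you flag, are both avoidable. Once $\tilde u:=t^{(1-2/n)c_0}u$ satisfies $-\partial_t\tilde u\le\Delta_{g_t}\tilde u-\partial_X\tilde u$, apply the maximum principle directly to $\tilde u$ from $t=1/2$: since on $[1/2,1]$ the curvature is bounded by $2c$, standard heat-kernel estimates give $u(\cdot,1/2)\le C(n,c)$, hence $\tilde u(\cdot,1/2)\le C$; the (noncompact) maximum principle on each $[a,1/2]$ --- justified by the $a$-dependent Gaussian decay of $u$ available there --- yields $\tilde u(\cdot,t)\le C$ uniformly, and one lets $a\searrow0$. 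This is exactly how the paper reads the Bamler--Chen input (``only uses the maximum principle and the super-exponential spatial decay of the kernel as in (\ref{eq:preliminaryHKestimate})''): Gaussian control is needed \emph{only at $t=1/2$}, not uniformly down to $t=0$. So your proposed programme of upgrading $|\Rm(g_t)|\le c/t$ to $|\nabla^{\bar g_t}g_t|\le c'/\sqrt t$ via interior smoothing, and then invoking Theorem~\ref{thm:standardRDTFHK} at every scale to bound $\mathcal G$, is unnecessary for the $L^\infty$ bound (\ref{eq:universalBackwardsHKbound}); it would be the harder way to close the argument.
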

\begin{proof}
First consider the Ricci flow $\tilde g_t := \chi_t^* g_t$, defined for $t\in (0, 1]$, where $\chi_t$ is the family of diffeomorphisms given by
\begin{equation*}
\begin{cases}
\partial_t \chi_t &= X_{\bar g_t}(g_t)\\
\chi_1 &= \id.
\end{cases}
\end{equation*}

We first show that there exists a backwards heat kernel $\tilde u$ solving
\begin{equation*}
\begin{cases}
-\partial_t \tilde u &= \Delta_{\tilde g_t} \tilde u - (1-\tfrac{2}{n})R^{\tilde g_t}\tilde u\\
\tilde u(\cdot,t) & \xrightarrow[t \nearrow 1]{} \delta_p.
\end{cases}
\end{equation*}
The existence of $\tilde u$ and a corresponding estimate of the form (\ref{eq:universalBackwardsHKbound}) for $\tilde u$ are essentially as in the proof of \cite[Theorem 2.3]{BamlerChen23}: $\tilde g_t$ has uniformly bounded curvature on intervals of the form $[a, 1]$, for $a >0$ (where the uniform curvature bound depends on $a$). Then, by \cite[Theorem 24.40]{ChowEtAl10}, on any such interval there exists a unique smooth positive fundamental solution $\tilde H^a(x,t;y,s)$ for the operator $\partial_t - \Delta_{x,t} + \tfrac{2}{n}R$, for $t,s\in [a, 1]$ with $s<t$. 
By uniqueness of the fundamental solution on $[a,1]$, it follows that there is a fundamental solution $\tilde H(x,t;y,s)$ for $\partial_t - \Delta_{x,t} + \tfrac{2}{n}R$ defined for $t,s\in (0, 1]$ with $s<t$.

Also, since $(\tilde g_t)_{t\in (0,1]}$ has uniformly bounded curvature for $t\in [\tfrac{1}{2}, 1]$ and $\tilde g_1 = g_1$ is uniformly bilipschitz to $\bar g$, it follows (see, for instance, \cite[Lemma 5.3.2]{Topping06}) that $\tilde g_t$ is uniformly bilipschitz (with a time-independent bilipschitz constant) to $\bar g$ on $[\tfrac{1}{2}, 1]$. Therefore, by \cite[Theorem 26.25]{ChowEtAl10}, there exist uniform constants $C_1(n, c, b, v_0)$ and $C_2(n, c, b)$ such that such that for $t,s\in [\tfrac{1}{2},1]$ with $s<t$,
\begin{equation}\label{eq:preliminaryHKestimate}
\tilde H(x,t;y,s) \leq \frac{C_1\exp\left(-\frac{d^2_{\bar g}(x,y)}{C_2(t-s)}\right)}{(t-s)^{n/2}}.
\end{equation}
Then, for $y\in M$ and $t>0$, we define $\tilde u(y,t) := \tilde H(p, 1; y, t)$. The proof of a universal bound of the form (\ref{eq:universalBackwardsHKbound}) in \cite[Theorem 2.3]{BamlerChen23} only uses the maximum principle (see, for instance, \cite[Theorem 12.14]{ChowEtAl08}) and the super-exponential spatial decay of the kernel as in (\ref{eq:preliminaryHKestimate}). Therefore, due to the uniform curvature bound for $\tilde g_t$ for $t\in [\tfrac{1}{2}, 1]$ in our setting, we may apply their argument to find
\begin{equation}\label{eq:BackwardsHKboundRF}
\tilde u(\cdot, t) \leq Ct^{-(1-2/n)c_0}
\end{equation}
for all $0 < t < 1/2$. Now define $u$ by $u(x,t) := \tilde u(\chi_t^{-1}(x),t)$, so that $u$ is the backwards heat kernel for the Ricci-DeTurck flow, as expected, and the bound (\ref{eq:universalBackwardsHKbound}) is inherited from (\ref{eq:BackwardsHKboundRF}).
\end{proof}

\begin{lemma}\label{lemma:nonstandardHK}
In the setting of Theorem \ref{thm:nonstandardHK}, on any interval of the form $[a, 1]$ the kernel $u$ will satisfy bounds of the form
\begin{align*}
u(x,t) & \leq \frac{C_a}{(1-t)^{n/2}}\exp\left(-\frac{|x-y|^2}{D_a(1-t)}\right)\\
|\nabla^k u(x,t)| &\leq \frac{C_{a,k}}{(1-t)^{(n+k)/2}}\exp\left(-\frac{|x-y|^2}{D_a(1-t)}\right)
\end{align*}
where $C_a = C_a(a, n, c, b_1, b_2, v_0, \kappa)$, $C_{a,k} = C_{a,k}(a, k, n, c, b_1, b_2, v_0, \kappa)$, and $D_a = D_a(a, n, c, b_1, b_2, \kappa)$.
\end{lemma}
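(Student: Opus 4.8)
The plan is to reduce to the Ricci flow picture used in the proof of Theorem \ref{thm:nonstandardHK} and then to quote standard heat kernel estimates on the interval $[a,1]$, on which all the relevant geometry is uniformly controlled. (Here $y$ is understood to be the point $p$ appearing in Theorem \ref{thm:nonstandardHK}.) Recall that $u(x,t) = \tilde u(\chi_t^{-1}(x),t)$, where $\tilde u(x,t) = \tilde H(p,1;x,t)$, $\tilde H$ is the fundamental solution of $\partial_t - \Delta_{\tilde g_t} + \tfrac2n R^{\tilde g_t}$ for the Ricci flow $\tilde g_t = \chi_t^* g_t$, and $\chi_t$ solves $\partial_t\chi_t = X_{\bar g_t}(g_t)$ with $\chi_1 = \id$. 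First I would observe that on $[a,1]$ the hypothesis $|\Rm(g_t)| \le c/t$ gives $|\Rm(\tilde g_t)| \le c/a$, so that Shi's local derivative estimates (applied a little before time $a$) yield $|\nabla^k\Rm(\tilde g_t)| \le C_{a,k}$ on $[a,1]$ for all $k$; and, exactly as in the proof of Theorem \ref{thm:nonstandardHK} for the interval $[\tfrac12,1]$, this curvature bound together with the uniform bilipschitz equivalence of $\tilde g_1 = g_1$ to $\bar g$ forces $\tilde g_t$ to be uniformly bilipschitz to $\bar g$, hence uniformly non-collapsed and with a uniform lower Ricci bound, on all of $[a,1]$.

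With this bounded geometry in hand, the Gaussian upper bound for $\tilde u$ follows by applying \cite[Theorem 26.25]{ChowEtAl10} on $[a,1]$ --- exactly as it was applied on $[\tfrac12,1]$ in the proof of Theorem \ref{thm:nonstandardHK} --- which gives $\tilde H(x,t;y,s) \le C_a (t-s)^{-n/2}\exp(-d^2_{\bar g}(x,y)/(D_a(t-s)))$ for $s<t$ in $[a,1]$; specializing to $y = p$, $s = t$, $t = 1$ and comparing $d_{\bar g}$ with the Euclidean distance gives the first bound for $\tilde u$. For the derivative bounds I would use interior parabolic regularity: $\tilde u$ solves the backwards heat equation $-\partial_t\tilde u = \Delta_{\tilde g_t}\tilde u - (1-\tfrac2n)R^{\tilde g_t}\tilde u$ on $[a,1]$, whose coefficients have all derivatives uniformly bounded by Shi's estimates, so interior Schauder estimates hold on parabolic cylinders of a fixed size. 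Rescaling a cylinder of radius $\rho \sim \sqrt{1-t}$ based at $(x,t)$ (capped at a fixed size when $t$ is bounded away from $1$, where the claimed bound is then trivial) and inserting the sup bound from the Gaussian estimate gives $|\nabla^k\tilde u(x,t)| \le C_{a,k}(1-t)^{-(n+k)/2}\exp(-c'd^2_{\bar g}(x,p)/(1-t))$, where the slight loss in the exponential rate --- from $D_a$ down to a smaller constant, which I can shrink to $1/a$ by taking $\rho$ a smaller multiple of $\sqrt{1-t}$ --- is the price of localizing. Finally I would transfer everything to $u$ via $u(x,t) = \tilde u(\chi_t^{-1}(x),t)$: since $X_{\bar g_t}(g_t)$ and its spatial derivatives are uniformly bounded on $[a,1]$ (by the bilipschitz bound and the Koch--Lamm derivative estimates $|\nabla^k g_t|\le c(k)/t^{k/2}$ of Theorem \ref{theorem:KL}), integrating $\partial_t\chi_t = X_{\bar g_t}(g_t)$ shows $\chi_t^{\pm 1}$ and all their spatial derivatives are uniformly bounded on $[a,1]$, with $\chi_t^{-1}$ a uniform bilipschitz map, so the Gaussian factor is preserved up to a distance comparison and $\nabla^k u$ is a sum of terms $(\nabla^j\tilde u)\circ\chi_t^{-1}$ with $j\le k$ times bounded polynomials in the derivatives of $\chi_t^{-1}$, retaining the rate $1/a$ (after possibly decreasing $\rho$ once more to absorb the distance comparison).

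The main obstacle will be purely organizational: making sure that Shi's estimates, the uniform bilipschitz and non-collapsing bounds, and the control on $\chi_t$ are all genuinely uniform on $[a,1]$ with constants depending only on $a,n,c,b,v_0,\kappa$ (blowing up only as $a\searrow 0$), and then tracking the chain rule carefully when passing from $\tilde u$ to $u$ so that neither a derivative count nor an exponential rate is lost. Nothing here is delicate --- it is the standard package of bounded-geometry heat kernel and interior parabolic regularity estimates transplanted along the DeTurck diffeomorphisms --- but it does need to be written out with some care.
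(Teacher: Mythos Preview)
Your proposal is correct and follows essentially the same approach as the paper: the paper's proof simply observes that on $[a,1]$ the curvature is bounded by $c/a$, then cites \cite[Theorem 26.25]{ChowEtAl10} for the Gaussian upper bound and \cite[Theorem 8.12.1]{Krylov96} (i.e., interior parabolic regularity) for the derivative estimates. Your write-up supplies additional detail the paper omits --- Shi's estimates for the coefficients, the rescaling argument, and the transfer through the DeTurck diffeomorphisms $\chi_t$ --- but the underlying strategy is the same.
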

\begin{proof}
Both of these statements are due to the fact that on such intervals, the flow will have a curvature bound of the form $c/a$. The pointwise bound for $u$ is due to \cite[Theorem 26.25]{ChowEtAl10}, since $(g_t)_{t\in (0,1]}$ is uniformly bilipschitz to $\bar g$ for all $t\in (0,1]$ (for some time-independent bilipschitz constant). The derivative estimates are due to \cite[Theorem 8.12.1]{Krylov96}, after adjusting $D_a$; see also \cite[Corollary 2.7]{PBG19}.
\end{proof}

\begin{lemma}\label{lemma:RBounduIntegration}
Let $(M^n, \bar g)$ be a smooth complete Riemannian manifold such that $\Ric(\bar g) \geq \kappa$ for some $\kappa\in \R$ and such that $\inf_{x\in M, 0 < r < 1}|B(x, r)|_{\bar g}/r^n > v_0$ for some $v_0 > 0$. Suppose that $(\bar g_t)_{t\in [0,1]}$ is a smooth Ricci flow on $M$ such that for some $b_1>0$ and for all $t\in [0,1]$, $\bar g_t$ is $(1+b_1)$-bilipschitz to $\bar g$. Suppose that $(g_t)_{t\in (0,1]}$ is a smooth solution to the Ricci-DeTurck flow on $M$ with respect to the background Ricci flow $\bar g_t$, such that for some $b_2>0$, $g_t$ is $(1+b_2)$-bilipschitz to $\bar g_t$, for all $t\in (0,1]$. Suppose that there exists a constant $c>0$ such that for all $t\in (0,1]$ and $k= 1,2$, $||\nabla^k(g_t)||_{C^0(M)}\leq c/t^{k/2}$, where $\nabla$ denotes the derivative with respect to $\bar g_t$. Let $p\in M$, and suppose that $u$ is the backwards heat kernel based at $(p,1)$ given by Theorem \ref{thm:nonstandardHK}. Then, for all $0 < t < 1$, we have
\begin{equation}
R(p,1) \geq \int R(x,t)u(x,t)dg_t(x).
\end{equation}
\end{lemma}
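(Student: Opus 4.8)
The plan is to prove that
$$F(t) := \int_M R(g_t)(x)\,u(x,t)\,dg_t(x)$$
is nondecreasing on $(0,1)$ and that $F(t)\to R(p,1)$ as $t\nearrow 1$; the asserted inequality follows at once. The coefficient $1-\tfrac{2}{n}$ in the backwards equation (\ref{eq:BackwardsHK}) is exactly what makes this work: it is tuned so that the positive source term $\tfrac{2}{n}R^2$ in the scalar-curvature evolution (\ref{eq:RDTFscalarev}) is cancelled in $F'(t)$.

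First I would pass to the associated Ricci flow, where the DeTurck vector field disappears. Let $\chi_t$ and $\tilde g_t := \chi_t^* g_t$ be as in the proof of Theorem \ref{thm:nonstandardHK}, and let $\tilde u(\cdot,t):=\tilde H(p,1;\cdot,t)$ be the positive backwards kernel for $\partial_t - \Delta_{\tilde g_t} + \tfrac{2}{n}R$ constructed there, so that $u(x,t)=\tilde u(\chi_t^{-1}(x),t)$. Since $\chi_t$ is an isometry from $(M,\tilde g_t)$ to $(M,g_t)$ with $\chi_t^*(dg_t)=d\tilde g_t$ and $R(g_t)\circ\chi_t = R(\tilde g_t)$, the change of variables $x=\chi_t(y)$ gives
\begin{equation*}
F(t) = \int_M R(\tilde g_t)(y)\,\tilde u(y,t)\,d\tilde g_t(y) =: \tilde F(t),
\end{equation*}
and, because $\chi_1=\id$, we have $\tilde g_1=g_1$ and $R(\tilde g_1)(p)=R(p,1)$. (Carried out directly in the Ricci-DeTurck picture, the terms involving $X_{\bar g_t}(g_t)$ in $F'(t)$ do not combine with a definite sign, so this reduction is essentially forced.)

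To see that $\tilde F$ is nondecreasing, differentiate using (\ref{eq:Ricciscalarev}), (\ref{eq:RicciVolev}), and $\partial_t\tilde u = -\Delta_{\tilde g_t}\tilde u + (1-\tfrac{2}{n})R\,\tilde u$. Since $\tilde u\geq 0$,
\begin{align*}
\tilde F'(t) &\geq \int_M \big(\Delta_{\tilde g_t} R + \tfrac{2}{n}R^2\big)\tilde u\,d\tilde g_t + \int_M R\big(-\Delta_{\tilde g_t}\tilde u + (1-\tfrac{2}{n})R\,\tilde u\big)d\tilde g_t - \int_M R^2\,\tilde u\,d\tilde g_t \\
&= \int_M \big((\Delta_{\tilde g_t} R)\,\tilde u - R\,\Delta_{\tilde g_t}\tilde u\big)\,d\tilde g_t = 0,
\end{align*}
where the $R^2$ terms cancel ($\tfrac{2}{n}+(1-\tfrac{2}{n})-1=0$) and the remaining integral vanishes by Green's identity. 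Finally, $\tilde u(\cdot,t)\to\delta_p$ while $R(\tilde g_t)\to R(\tilde g_1)$ locally uniformly (with a uniform bound near $t=1$) as $t\nearrow 1$, so $\tilde F(t)\to R(\tilde g_1)(p)=R(p,1)$; together with the monotonicity this gives $F(t)=\tilde F(t)\leq R(p,1)$ for every $t\in(0,1)$.

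The part requiring care — and the main obstacle — is justifying the differentiation under the integral sign and the vanishing of the boundary term in Green's identity $\int_M\big((\Delta_{\tilde g_t}R)\tilde u - R\Delta_{\tilde g_t}\tilde u\big)d\tilde g_t = 0$. This is where the hypotheses enter: on any $[a,1]$ the bounds $|\Rm(g_t)|\leq c/t$ and $|\nabla^k g_t|\leq c/t^{k/2}$ for $k\leq 4$ give pointwise bounds on $R(\tilde g_t)$, $\nabla R(\tilde g_t)$, $\nabla^2 R(\tilde g_t)$, and $\partial_t R(\tilde g_t)$ depending only on $a$ (the relevant norms being invariant under pullback by $\chi_t$), while Lemma \ref{lemma:nonstandardHK} supplies Gaussian spatial decay for $u$, hence for $\tilde u$ and $\nabla\tilde u$, on $[a,1]$; combined with the volume growth bound on $(M,\bar g)$ and the uniform bilipschitz control of $\tilde g_t$ on $[a,1]$, every integrand and its $t$-derivative is dominated by an integrable function, and $\tilde u\,\nabla R - R\,\nabla\tilde u$ has Gaussian decay, so $\div(\tilde u\,\nabla R - R\,\nabla\tilde u)$ integrates to zero by a cutoff argument. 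For fixed $t\in(0,1)$ one applies all of this on the interval $[t,1)$ with $a=t$; the fact that the constants degenerate as $a\searrow 0$ is harmless since the inequality is proved for each $t$ separately.
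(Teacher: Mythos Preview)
Your proof is correct and follows essentially the same route as the paper: pass to the associated Ricci flow via $\chi_t$, compute that $\tilde F'(t)\geq 0$ using (\ref{eq:Ricciscalarev}), (\ref{eq:RicciVolev}), the backwards equation for $\tilde u$, and the exact cancellation $\tfrac{2}{n}+(1-\tfrac{2}{n})-1=0$, then justify Green's identity via the decay of $\tilde u$ from Lemma \ref{lemma:nonstandardHK} together with the uniform bounds on $R$ and its derivatives in each fixed time slice. Your write-up is in fact more explicit than the paper's about the limit $\tilde F(t)\to R(p,1)$ and the domination needed for differentiation under the integral.
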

\begin{proof}
Let $\chi_t$, $\tilde g_t$, and $\tilde u$ be as in the proof of Theorem \ref{thm:nonstandardHK}. 

First note that, since $g_t$ is uniformly bilipschitz to $\bar g$ for any $t\in (0,1]$ and $\bar g$ is complete, $g_t$ is complete as well, and hence $\tilde g_t$ is complete for any $t\in (0,1]$. Then, for any fixed $t_0 >0$ and $x\in M$, an estimate of Shi \cite[Theorem 3.3.3]{Topping06} implies that, since $|\Rm(\tilde g_t)| \leq 2C(n)/t_0$ for all $t\in [t_0/2, t_0]$,
 \begin{equation}
 |\nabla^{\tilde g_{t_0}}\Rm^{\tilde g_{t_0}}|(x) \leq \left(\frac{C(n)}{t_0}\right)\sqrt{\frac{1}{t_0} + \frac{2C(n)}{t_0}} \leq \frac{C(n)}{t_0^{3/2}},
 \end{equation}
 where $C(n)$ is adjusted. In particular, $\nabla^{\tilde g_t}R(\tilde g_t)$ is uniformly bounded by $C(n)/t^{3/2}$ in any fixed time slice.

Then, for any $t\in (0, 1)$ we have
\begin{align*}
\int_{\R^n}R^{g^t}(x)u_t(x)dg_t(x) &= \int_{\R^n}\chi_t^*(R^{g^t}u_t)(x)d(\chi_t^*g_t)(x)
\\&=  \int_{\R^n}R^{\tilde g^t}(x)\tilde u_t(x)d\tilde g_t(x).
\end{align*}
Then
\begin{align*}
\frac{d}{dt}\int_{\R^n}R^{g^t}(x)u_t(x)dg_t(x) &= \frac{d}{dt}\int_{\R^n}R^{\tilde g^t}(x)\tilde u_t(x)d\tilde g_t(x)
\\&\geq \int_{\R^n}\left( \Delta_{\tilde g_t} R^{\tilde g^t}(x) + \frac{2}{n}(R^{\tilde g_t}(x))^2\right)\tilde u_t(x)d\tilde g_t(x)
\\& + \int_{\R^n}R^{\tilde g^t}(x)\left(-\Delta_{\tilde g_t} \tilde u_t (x) + (1-\tfrac{2}{n})R^{\tilde g_t}(x)\tilde u_t(x)\right)d\tilde g_t(x)
\\& + \int_{\R^n}R^{\tilde g^t}(x)\tilde u_t(x)(-R^{\tilde g_t}(x))d\tilde g_t(x)
\\& = \int_{\R^n}\Delta_{\tilde g_t} R^{\tilde g^t}(x)\tilde u_t(x) - R^{\tilde g^t}(x)\Delta_{\tilde g_t} \tilde u_t (x)d\tilde g_t(x)
\\&=0,
\end{align*}
where in the last step we may apply the Green's identity due to the super-exponential decay of $u$ given by Lemma \ref{lemma:nonstandardHK} and the fact that, in any fixed time slice, $\nabla R$ is uniformly bounded.
Thus, we have, for any $t\in (0,1)$,
\begin{align*}
R(p,1) &= \int_{\R^n} R_t(x)u_t(x)dg_t(x) + \int_t^{1}\frac{d}{ds}\int_{\R^n}R_s(x)u_s(x)dg_s(x) ds
\\& \geq \int_{\R^n} R_t(x)u_t(x)dg_t(x).
\end{align*}
\end{proof}

\section{Step 1: spatial lower scalar curvature bound}\label{sec:spatialLowerBound}
In this section we prove Theorem \ref{thm:SingSetSpatialLowerbound}. In this step we only use the fact that the scalar curvature under Ricci-DeTurck flow is a supersolution to the heat equation; that is, at this stage we do not use the positive source term in (\ref{eq:RDTFscalarev}). Note that very similar arguments appeared in \cite{PBG19} and \cite{PBG23}; here the arguments are slightly adjusted to obtain a lower bound that depends on $x$.
\begin{proof}[Proof of Theorem \ref{thm:SingSetSpatialLowerbound}] 
We determine the $\bar C, \bar D, \bar c$ in the course of the proof. First, let $C(n, c) = C_2$ and $D(n, c) = D_2$, where $C_2$ and $D_2$ are the constants from Theorem \ref{thm:standardRDTFHK} (with $\bar g = \bar g_t = \delta$ for all $t\geq 0$). Let $D' = D/2$ and, for $x\in \R^n$ and $0 < t \leq 1$, choose $C' > 0$ such that
\begin{equation}\label{eq:C'bound}
C' \leq  \int_{\R^n} \frac{C}{t^{n/2}}\exp\left(-\frac{|x-y|_{\delta}^2}{D't/2}\right)d_{\delta}y,
\end{equation} 
and observe that the right side of (\ref{eq:C'bound}) does not depend on $t$ or $x$, so $C'$ may be chosen independently of $x$ and $t$ as well. In particular, we have $C' = C'(n, c)$ and $D' = D'(n, c)$. Let $C''(n) = C'(n)(n + |\kappa_0|) > C'(n)$.

Fix a constant $0 < c_0 \leq 1$, which will also be determined in the course of the proof. We will choose $c_0$ so that it depends only on $\beta$. We will choose $\bar C$, $\bar D$, and $\bar c$ so that for all $0 < t\leq 1$ and $d_{\delta}(x, \Sing) \geq \bar ct^\eta$,
\begin{equation}\label{eq:negativesum}
-\frac{\bar C n}{2t}\exp\left(-\frac{d_{\delta}^2(x, \Sing)t^{2\beta}}{\bar D t}\right) + \sum_{i=1}^{\infty}\frac{C''(n)}{(t/2^{i-1})}\exp\left(-\frac{c_0^2d_{\delta}^2(x, \Sing)(t/2^i)^{2\beta}}{D't/2^i}\right) <0.
\end{equation}
To see why this is possible, let $\bar c = \sqrt{D'/c_0^2}$. First observe that if $0 < t\leq 1$ and $d_{\delta}(x, \Sing) \geq \bar c t^{\eta}$ then
\begin{equation*}
\exp\left(\frac{c_0^2d_{\delta}^2(x, \Sing)}{D't^{1-2\beta}}\right) \geq \exp\left(\frac{1}{t^{1-2\beta-2\eta}}\right) \geq e >2.
\end{equation*}
Also, $(2^{1-2\beta})^{i}-1 \geq i$ for all $i$ sufficiently large depending on $\beta$, say, for all $i\geq m(\beta)$. In particular, for $i\geq m(\beta)$ we have
\begin{equation*}
\exp\left(-\frac{c_0^2d_{\delta}^2(x, \Sing)}{D't^{1-2\beta}}((2^{1-2\beta})^i - 1) \right) = \left[\exp\left(-\frac{c_0^2d_{\delta}^2(x, \Sing)}{D't^{1-2\beta}} \right)\right]^{((2^{1-2\beta})^i - 1)} \leq \left(\frac{1}{e}\right)^i,
\end{equation*}
so
\begin{align*}
\sum_{i=1}^{\infty} 2^i\exp\left(-\frac{c_0^2d_{\delta}^2(x, \Sing)}{D't^{1-2\beta}}((2^{1-2\beta})^i - 1) \right) &= \sum_{i=1}^{m(\beta) - 1} 2^i\exp\left(-\frac{c_0^2d_{\delta}^2(x, \Sing)}{D't^{1-2\beta}}((2^{1-2\beta})^i - 1) \right) 
\\& \qquad \qquad + \sum_{i=m(\beta)}^{\infty}2^i\exp\left(-\frac{c_0^2d_{\delta}^2(x, \Sing)}{D't^{1-2\beta}}((2^{1-2\beta})^i - 1) \right)
\\&\leq  \sum_{i=1}^{m(\beta) - 1} 2^i + \sum_{i=m(\beta)}^{\infty} \left(\frac{2}{e}\right)^i
\\&\leq C(\beta).
\end{align*}

In particular, for all $k\in \mathbb{N}$,  we have
\begin{align*}
&-\frac{\bar Cn}{2t}\exp\left(-\frac{d_{\delta}^2(x, \Sing)t^{2\beta}}{\bar Dt}\right) + \sum_{i=1}^{k}\frac{C''(n)}{(t/2^{i-1})}\exp\left(-\frac{c_0^2d_{\delta}^2(x, \Sing)(t/2^i)^{2\beta}}{D't/2^i}\right)
\\&= -\frac{\bar C n}{2t}\exp\left(-\frac{d_{\delta}^2(x, \Sing)t^{2\beta}}{\bar D t}\right) 
\\& \qquad \qquad + \frac{C''(n)}{2t}\exp\left(-\frac{c_0^2d_{\delta}^2(x, \Sing)}{D't^{1-2\beta}}\right)\sum_{i=1}^{k}2^i\exp\left(-\frac{c_0^2d_{\delta}^2(x, \Sing)(t/2^i)^{2\beta}}{D't/2^i} + \frac{c_0^2d_{\delta}^2(x, \Sing)}{D't^{1-2\beta}}\right)
\\& = -\frac{\bar C n}{2t}\exp\left(-\frac{d_{\delta}^2(x, \Sing)t^{2\beta}}{\bar D t}\right) 
\\& \qquad \qquad + \frac{C''(n)}{2t}\exp\left(-\frac{c_0^2d_{\delta}^2(x, \Sing)}{D't^{1-2\beta}}\right)\sum_{i=1}^{k}2^i\exp\left(-\frac{c_0^2d_{\delta}^2(x, \Sing)}{D't^{1-2\beta}}((2^{1-2\beta})^i-1)\right)
\\&\leq -\frac{\bar C n}{2t}\exp\left(-\frac{d_{\delta}^2(x, \Sing)t^{2\beta}}{\bar Dt}\right) + \frac{C(\beta)C''(n)}{2t}\exp\left(-\frac{c_0^2d_{\delta}^2(x, \Sing)}{D't^{1-2\beta}}\right),
\end{align*}
which is negative provided that $\bar C > C(\beta)C''(n, c)$ and $\bar D > D'(n,c)/c_0^2$. This determines $\bar C$, and it also determines $\bar D$ and $\bar c$ once $c_0$ is chosen.

We now proceed to the rest of the proof. Let $\Phi(x,t;y,s)$ denote the usual heat kernel for the Ricci-DeTurck flow, as in Theorem \ref{thm:standardRDTFHK}. We argue by contradiction.

Suppose $R(x,t) < \kappa_0 -\frac{\bar C n}{2t}\exp\left(-\frac{d_{\delta}^2(x, \Sing)t^{2\beta}}{\bar Dt}\right)$, for some $0 < t \leq 1$ and $d_{\delta}(x, \Sing) \geq \bar ct^{\eta}$. In what follows, all balls are measured with respect to the Euclidean metric.

\begin{claim*}
There exists $x' \in B(x, c_0d_{\delta}(x, \Sing)(t/2)^{\beta})$ such that 
\begin{equation}\label{eq:contradictionBaseCase}
R(x', t/2) \leq \kappa_0 -\frac{\bar C n}{2t}\exp\left(-\frac{d_{\delta}^2(x, \Sing)t^{2\beta}}{\bar Dt}\right) + \frac{C''(n)}{t}\exp\left(-\frac{c_0^2d_{\delta}^2(x, \Sing)(t/2)^{2\beta}}{D't/2}\right).
\end{equation}
\end{claim*}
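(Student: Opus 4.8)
The plan is to argue by contradiction and exploit that $R^{g_\bullet}$ is a supersolution of the heat-type equation (\ref{eq:RDTFscalarev}). Suppose the claimed point does not exist, i.e.\ $R(\,\cdot\,,t/2) > \kappa_0 - \frac{\bar C n}{2t}\exp(-d_\delta^2(x,\Sing)t^{2\beta}/(\bar D t)) + \frac{C''(n)}{t}\exp(-c_0^2 d_\delta^2(x,\Sing)(t/2)^{2\beta}/(D' t/2))$ at every point of $B := B(x, c_0 d_\delta(x,\Sing)(t/2)^\beta)$. Discarding the nonnegative term $\tfrac 2n R^2$ in (\ref{eq:RDTFscalarev}) and invoking the maximum principle in the form of a Green's-identity computation against the standard Ricci--DeTurck heat kernel $\Phi$ of Theorem \ref{thm:standardRDTFHK} (for the background $\delta$, whose hypotheses hold since $\Ric(\delta)=0$ and $\delta$ has Euclidean volume growth; the computation is as in the proof of Lemma \ref{lemma:RBounduIntegration}, and is justified by the super-exponential decay of $\Phi$ and the uniform bounds on $\nabla R$, $\Delta R$ on fixed time slices coming from the derivative hypotheses on $g_t$), one gets the representation inequality
\[
R(x,t)\ \geq\ \int_{\R^n}\Phi(x,t;y,t/2)\,R(y,t/2)\,dg_{t/2}(y).
\]

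Next I would split this integral as $\int_{B}+\int_{\R^n\setminus B}$. On $B$ I use the contradiction hypothesis. On $\R^n\setminus B$ I use two things: the a priori floor $R(\,\cdot\,,t/2)\geq -n/(2\cdot t/2) = -n/t$, which is the maximum-principle bound $R(g_s)\geq -n/(2s)$; and the exterior heat-kernel mass estimate (\ref{eq:standardHKintegraloutsideball}) with radius $r = c_0 d_\delta(x,\Sing)(t/2)^\beta$ and $t-s = t/2$. Combining these with the normalization $\int_{\R^n}\Phi(x,t;y,t/2)\,dg_{t/2}(y)=1$ produces a lower bound for $R(x,t)$ equal to $\kappa_0 - \frac{\bar C n}{2t}\exp(-d_\delta^2(x,\Sing)t^{2\beta}/(\bar D t))$ plus the gain term $\frac{C''(n)}{t}\exp(-c_0^2 d_\delta^2(x,\Sing)(t/2)^{2\beta}/(D' t/2))$ minus a controlled error coming from $\R^n\setminus B$ (of size roughly $\tfrac nt$ times the Gaussian tail $\int_{\R^n\setminus B}\Phi\,dg_{t/2}$). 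The point is then to verify, using the constants $C=C_2$, $D=D_2$, $D'$, $\bar c=\sqrt{D'/c_0^2}$, $C'$ (as in (\ref{eq:C'bound})), $C''=C'(n+|\kappa_0|)$, and $c_0=c_0(\beta)$ fixed at the start of the proof, that the gain term dominates this error; this yields $R(x,t)\geq \kappa_0 - \frac{\bar C n}{2t}\exp(-d_\delta^2(x,\Sing)t^{2\beta}/(\bar D t))$, contradicting the standing assumption $R(x,t)<\kappa_0 - \frac{\bar C n}{2t}\exp(-d_\delta^2(x,\Sing)t^{2\beta}/(\bar D t))$ of the proof, and so forcing the point $x'$ of (\ref{eq:contradictionBaseCase}) to exist. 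The hypotheses $d_\delta(x,\Sing)\geq \bar c t^\eta$ with $0<\eta\leq (1-2\beta)/2$ and $\beta<1/2$ enter precisely here: they make $r$ large compared to the heat-kernel scale $\sqrt t$ uniformly in $t$, which is what the error bound needs.

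\textbf{Main obstacle.} The heat-kernel representation, the splitting of the integral, and the floor $-n/t$ are routine; the real work is the quantitative comparison in the last step — showing that $\tfrac nt$ times the Gaussian tail of $\Phi$ outside $B$ is absorbed into the gain term, uniformly over all $0<t\leq 1$ and all $x$ with $d_\delta(x,\Sing)\geq \bar c t^\eta$. This uniform comparison is exactly what dictated the earlier choices of $\bar c$, $D'$, $C'$, $C''$, $\bar C$, $\bar D$ and, crucially, of $c_0$ depending only on $\beta$ (and is why (\ref{eq:negativesum}) was arranged). Carrying out this bookkeeping cleanly, in the right order, is the main technical point; everything else reduces to the heat-kernel estimates of Section \ref{sec:preliminaries}.
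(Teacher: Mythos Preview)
Your proposal is correct and follows essentially the same route as the paper: assume the claim fails on the ball, use the representation $R(x,t)-\kappa_0\geq\int_{\R^n}\Phi(x,t;y,t/2)[R(y,t/2)-\kappa_0]\,dg_{t/2}$ from the supersolution property and Theorem \ref{thm:standardRDTFHK}, split into $B$ and $\R^n\setminus B$, apply the contradiction hypothesis on $B$ (using that the right-hand side of (\ref{eq:contradictorypositivesum}) is negative by (\ref{eq:negativesum}), so $\int_B\Phi\leq 1$ goes the right way), apply the floor $R(\cdot,t/2)-\kappa_0\geq -[n/t+|\kappa_0|]$ together with (\ref{eq:standardHKintegraloutsideball}) on the complement, and observe that the choice $C''=C'(n+|\kappa_0|)$ makes the gain term absorb the tail error. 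The only cosmetic difference is that the paper works throughout with $R-\kappa_0$ rather than invoking $\int\Phi=1$ separately, which slightly streamlines the bookkeeping with $\kappa_0$.
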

\begin{proof}[Proof of Claim]
Suppose not, i.e. suppose that on $B(x, c_0d_{\delta}(x, \Sing)t^{\beta})$ we have
\begin{equation}\label{eq:contradictorypositivesum}
R(\cdot,  t/2) - \kappa_0 > -\frac{\bar C n}{2t}\exp\left(-\frac{d_{\delta}^2(x, \Sing)t^{2\beta}}{\bar D t}\right) + \frac{C''(n)}{t}\exp\left(-\frac{c_0^2d_{\delta}^2(x, \Sing)(t/2)^{2\beta}}{D't/2}\right).
\end{equation}
Observe that, by (\ref{eq:negativesum}), the right side of (\ref{eq:contradictorypositivesum}) is negative, so by Theorem \ref{thm:standardRDTFHK}, 
\begin{align*}
& -\frac{\bar C n}{2t}\exp\left(-\frac{d_{\delta}^2(x, \Sing)t^{2\beta}}{\bar D t}\right) > R(x,t) - \kappa_0
\\& \geq \int_{B(x, c_0d_{\delta}(x, \Sing)(t/2)^\beta)}\Phi(x,t;y,t/2)[R(y,t/2) - \kappa_0]d_{t/2}y 
\\& \qquad \qquad + \int_{\R^n\setminus B(x, c_0d_{\delta}(x, \Sing)(t/2)^\beta)}\Phi(x,t;y,t/2)[R(y,t/2) - \kappa_0]d_{t/2}y
\\&\geq -\frac{\bar C n}{2t}\exp\left(-\frac{d_{\delta}^2(x, \Sing)t^{2\beta}}{\bar D t}\right) + \frac{C''(n)}{t}\exp\left(-\frac{c_0^2d_{\delta}^2(x, \Sing)(t/2)^{2\beta}}{D't/2}\right) 
\\& \qquad \qquad -\left[\frac{n}{t} + \kappa_0\right]\int_{\R^n\setminus B(x, c_0d_{\delta}(x, \Sing)(t/2)^{\beta})}\Phi(x,t;y,t/2)dy
\\& \geq -\frac{\bar C(n)}{2t}\exp\left(-\frac{d_{\delta}^2(x, \Sing)t^{2\beta}}{\bar D t}\right) + \frac{C''(n)}{t}\exp\left(-\frac{c_0^2d_{\delta}^2(x, \Sing)(t/2)^{2\beta}}{D't/2}\right) 
\\& \qquad \qquad -C'(n)\left[\frac{n}{t} + \kappa_0\right]\exp\left(-\frac{c_0^2d_{\delta}^2(x, \Sing)(t/2)^{2\beta}}{D't/2}\right)
\\& \geq -\frac{\bar C n}{2t}\exp\left(-\frac{d_{\delta}^2(x, \Sing)t^{2\beta}}{\bar Dt}\right),
\end{align*}
where $C'$, $C''$, and $D'$ are as described above.
To summarize, we have shown that
\begin{equation*}
 -\frac{\bar C n}{2t}\exp\left(-\frac{d_{\delta}^2(x, \Sing)t^{2\beta}}{\bar D t}\right) > -\frac{\bar C n}{2t}\exp\left(-\frac{d_{\delta}^2(x, \Sing)t^{2\beta}}{\bar Dt}\right).
\end{equation*}
This is a contradiction, whence follows the claim.
\end{proof}

\begin{claim*}
Suppose there is a point $x^k \in \R^n$ such that
\begin{equation*}
R(x^k, t/2^k) <  \kappa_0 -\frac{\bar Cn}{2t}\exp\left(-\frac{d_{\delta}^2(x, \Sing)t^{2\beta}}{\bar Dt}\right) + \sum_{i=1}^{k}\frac{C''(n)}{(t/2^{i-1})}\exp\left(-\frac{c_0^2d_{\delta}^2(x, \Sing)(t/2^i)^{2\beta}}{D't/2^i}\right).
\end{equation*}
Then there exists a point $x^{k+1}\in B(x^k, c_0d_{\delta}(x, \Sing)(t/2^{k+1})^{\beta})$ such that 
\begin{equation*}
R(x^{k+1}, t/2^{k+1})< \kappa_0 -\frac{\bar C n}{2t}\exp\left(-\frac{d_{\delta}^2(x, \Sing)t^{2\beta}}{\bar D t}\right) + \sum_{i=1}^{k+1}\frac{C''(n)}{(t/2^{i-1})}\exp\left(-\frac{c_0^2d_{\delta}^2(x, \Sing)(t/2^i)^{2\beta}}{D't/2^i}\right).
\end{equation*}
\end{claim*}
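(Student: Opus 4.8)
The plan is to read this Claim as the inductive step of the point-picking scheme begun in the preceding Claim, and to prove it by repeating \emph{verbatim}, at the parabolic scale $t/2^{k}$ and with base point $x^{k}$, the argument already carried out there (the preceding Claim being exactly the $k=0$ instance, with base point $x$). The constants $\bar C$, $\bar D$, $\bar c$, $C'$, $C''$, $D'$ and $c_0$ were fixed at the start of the proof precisely so that this runs at every scale $t/2^{k}$ simultaneously. Write $d:=d_\delta(x,\Sing)$ and $t_j := t/2^{j}$, so the relevant parabolic slab is $[t_{k+1},t_{k}]$, with time gap $t_k-t_{k+1}=t_{k+1}$.

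First I would argue by contradiction: if no point $x^{k+1}\in B(x^k, c_0 d\, t_{k+1}^{\beta})$ satisfies the asserted inequality, then on $B(x^k,c_0 d\, t_{k+1}^{\beta})$ we have
\begin{equation*}
R(\cdot,t_{k+1})-\kappa_0 \;\geq\; -\frac{\bar C n}{2t}\exp\!\Big(\!-\frac{d^2 t^{2\beta}}{\bar D t}\Big) + \sum_{i=1}^{k+1}\frac{C''(n)}{t/2^{i-1}}\exp\!\Big(\!-\frac{c_0^2 d^2 (t/2^{i})^{2\beta}}{D' t/2^{i}}\Big),
\end{equation*}
and by the already-verified inequality (\ref{eq:negativesum}) (with $k+1$ summands) the right-hand side is strictly negative. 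Next I would use that $R^{g_t}$ is a supersolution of the heat equation along Ricci--DeTurck flow, cf. (\ref{eq:RDTFscalarev}), together with $\int_{\R^n}\Phi\, d_{t_{k+1}}y=1$, to obtain
\begin{equation*}
R(x^k,t_k)-\kappa_0 \;\geq\; \int_{\R^n}\Phi(x^k,t_k;y,t_{k+1})\big[R(y,t_{k+1})-\kappa_0\big]\, d_{t_{k+1}}y,
\end{equation*}
where $\Phi$ is the standard Ricci--DeTurck heat kernel; its Gaussian pointwise bound and the tail estimate (\ref{eq:standardHKintegraloutsideball}) on the slab $[t_{k+1},t_k]$ come from Theorem \ref{thm:standardRDTFHK} applied with $\theta=t_k$, whose hypotheses ($(1+\varepsilon)$-bilipschitz to $\delta$, and $|\nabla^{m}g_t|\le c\, t^{-m/2}$ for $m=1,2$) hold here since they are assumed on all of $(0,1]$. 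Splitting the integral over $B:=B(x^k,c_0 d\, t_{k+1}^{\beta})$ and its complement: on $B$ the bracket is bounded below by the (negative) quantity displayed above and $\int_B\Phi\le 1$, while over $\R^n\setminus B$ one uses $R\ge -n/(2t_{k+1})$ and (\ref{eq:standardHKintegraloutsideball}) with separation $c_0 d\, t_{k+1}^{\beta}$ and gap $t_{k+1}$; by the choices $C''(n)=C'(n)(n+|\kappa_0|)$ and $D'=D/2$ this tail contribution is absorbed by the $i=k+1$ summand, so that
\begin{equation*}
R(x^k,t_k) \;\geq\; \kappa_0 -\frac{\bar C n}{2t}\exp\!\Big(\!-\frac{d^2 t^{2\beta}}{\bar D t}\Big) + \sum_{i=1}^{k}\frac{C''(n)}{t/2^{i-1}}\exp\!\Big(\!-\frac{c_0^2 d^2 (t/2^{i})^{2\beta}}{D' t/2^{i}}\Big),
\end{equation*}
which contradicts the inductive hypothesis on $R(x^k,t_k)$; hence a point $x^{k+1}$ as claimed exists.

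Since the step is a scale-by-scale copy of the $k=0$ Claim, I do not expect a genuine obstruction. The one point to handle carefully is the constant bookkeeping in the last display --- verifying that the heat-kernel tail over $\R^n\setminus B$ is genuinely dominated by the newly gained $i=k+1$ summand \emph{at every} scale $t_k$, which is why $c_0$ (hence $\bar c$ and $\bar D$) was permitted to depend on $\beta$ --- but this is exactly the estimate already performed for (\ref{eq:negativesum}) and in the $k=0$ Claim, and rests only on the polynomial factors $2^{i}$ being beaten by the Gaussian decay once $(2^{1-2\beta})^{i}-1\ge i$. It is worth noting that this inductive step does \emph{not} invoke the $\beta$-weak condition; that hypothesis is used only afterwards, when one lets $k\to\infty$: the displacements $c_0 d\, t_{k+1}^{\beta}$ are summable, so $x^{k}$ converges to some $x^{\infty}\notin\Sing$ (choosing $c_0$ small relative to $\beta$ keeps $x^{\infty}$ away from $\Sing$), at which the inequalities above together with (\ref{eq:negativesum}) contradict (\ref{eq:betaweak}).
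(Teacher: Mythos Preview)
Your proposal is correct and follows essentially the same argument as the paper: assume the conclusion fails on $B(x^k,c_0 d_\delta(x,\Sing)(t/2^{k+1})^\beta)$, apply the supersolution property and split $\int_{\R^n}\Phi(x^k,t/2^k;y,t/2^{k+1})[R(y,t/2^{k+1})-\kappa_0]\,d_{t/2^{k+1}}y$ over this ball and its complement, use the universal bound $R(\cdot,t/2^{k+1})\ge -n/(t/2^k)$ together with the tail estimate (\ref{eq:standardHKintegraloutsideball}) to absorb the off-ball contribution into the $i=k+1$ summand via $C''=C'(n+|\kappa_0|)$, and reach a contradiction with the hypothesis on $R(x^k,t/2^k)$. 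The paper's proof is precisely this computation, written out as a single chain of inequalities without the surrounding commentary.
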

\begin{proof}[Proof of Claim]
This is as above. Suppose the claim is false, so that we have
\begin{align*}
-\frac{\bar C n}{2t}&\exp\left(-\frac{d_{\delta}^2(x, \Sing)t^{2\beta}}{\bar D t}\right) + \sum_{i=1}^{k}\frac{C''(n)}{(t/2^{i-1})}\exp\left(-\frac{c_0^2d_{\delta}^2(x, \Sing)(t/2^i)^{2\beta}}{D't/2^i}\right) > R(x^k, t/2^k) - \kappa_0
\\& \geq \int_{B(x^k, c_0d_{\delta}(x, \Sing)(t/2^{k+1})^\beta)}\Phi(x^k,t;y,t/2^{k+1})[R(y,t/2^{k+1}) - \kappa_0]dy 
\\& \qquad \qquad \qquad + \int_{\R^n\setminus B(x^k, c_0d_{\delta}(x, \Sing)(t/2^{k+1})^\beta)}\Phi(x^k,t;y,t/2^{k+1})[R(y,t/2^{k+1}) - \kappa_0]dy
\\& \geq -\frac{\bar C n}{2t}\exp\left(-\frac{d_{\delta}^2(x, \Sing)t^{2\beta}}{\bar D t}\right) + \sum_{i=1}^{k+1}\frac{C''(n)}{(t/2^{i-1})}\exp\left(-\frac{c_0^2d_{\delta}^2(x, \Sing)(t/2^i)^{2\beta}}{D't/2^i}\right)
\\& \qquad \qquad \qquad -C'(n)\left[\frac{n}{(t/2^{k})} + \kappa_0\right]\exp\left(-\frac{c_0^2d_{\delta}^2(x, \Sing)(t/2^{k+1})^{2\beta}}{D't/2^{k+1}}\right),
\end{align*}
a contradiction.
\end{proof}

\begin{claim*}
There exists a sequence of points $(x^k)_{k=0}^{\infty}\in \R^n$ such that 
\begin{enumerate}
\item $x^0 = x$,
\item $d(x^k, x^{k+1}) \leq c_0d_{\delta}(x, \Sing)(t/2^{k+1})^{\beta}$,
\item we have
\begin{equation*}
R(x^k, t/2^k) < \kappa_0 -\frac{\bar C n}{2t}\exp\left(-\frac{d_{\delta}^2(x, \Sing)t^{2\beta}}{\bar D t}\right) + \sum_{i=1}^{k}\frac{C''(n)}{(t/2^{i-1})}\exp\left(-\frac{c_0^2d_{\delta}^2(x, \Sing)(t/2^i)^{2\beta}}{D't/2^i}\right),
\end{equation*}
and
\item $x^k\xrightarrow[k\to\infty]{}x^{\infty}$ for some $x^{\infty}\notin \Sing$.
\end{enumerate}
\end{claim*}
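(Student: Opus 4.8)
The plan is to build $(x^k)_{k=0}^\infty$ by iterating the two preceding Claims, then to verify convergence of the sequence with a geometric series estimate, at which point I will finally pin down the still-free constant $c_0 = c_0(\beta)$ small enough that the limit cannot lie in $\Sing$.

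First I would establish (1)--(3) by induction on $k$. Set $x^0 = x$; then (3) holds for $k=0$ because the sum is empty and the stated inequality is exactly the standing contradiction hypothesis $R(x,t) < \kappa_0 - \frac{\bar C n}{2t}\exp\!\big(-d_\delta^2(x,\Sing)t^{2\beta}/(\bar D t)\big)$. Applying the first Claim to $x^0$ gives $x^1 \in B\big(x^0, c_0 d_\delta(x,\Sing)(t/2)^\beta\big)$ satisfying (3) for $k=1$, which is property (2) for index $0$. The inductive step is exactly the second Claim: from $x^k$ satisfying (3) one obtains $x^{k+1} \in B\big(x^k, c_0 d_\delta(x,\Sing)(t/2^{k+1})^\beta\big)$ satisfying (3) for $k+1$, i.e. (2) for index $k$. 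So (1)--(3) hold for all $k$.

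Next I would establish (4). By (2) and the triangle inequality, for $k < \ell$,
\[
d(x^k, x^\ell) \leq \sum_{i=k}^{\ell-1} d(x^i, x^{i+1}) \leq c_0\, d_\delta(x,\Sing)\, t^\beta \sum_{i=k}^\infty 2^{-(i+1)\beta},
\]
and since $\beta>0$ this geometric tail tends to $0$ as $k\to\infty$, so $(x^k)$ is Cauchy in $\R^n$ and converges to some $x^\infty$. Sending $\ell\to\infty$ with $k=0$ and using $0 < t \leq 1$,
\[
d(x^\infty, x) \leq c_0\, d_\delta(x,\Sing)\, t^\beta \cdot \frac{2^{-\beta}}{1-2^{-\beta}} \leq c_0\, d_\delta(x,\Sing)\cdot \frac{2^{-\beta}}{1-2^{-\beta}}.
\]
At this point I would fix $c_0 \in (0,1]$, depending only on $\beta$, small enough that $c_0\cdot 2^{-\beta}/(1-2^{-\beta}) \leq 1/2$; this is consistent because the earlier Claims are valid for any $c_0 \in (0,1]$ (the constants $C',C'',D'$ there are independent of $c_0$, and $\bar C,\bar D,\bar c$ are then chosen in terms of $c_0$). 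Then $d(x^\infty, x) \leq \tfrac12 d_\delta(x,\Sing)$, hence
\[
d_\delta(x^\infty, \Sing) \geq d_\delta(x,\Sing) - d(x^\infty, x) \geq \tfrac12 d_\delta(x,\Sing) \geq \tfrac12 \bar c\, t^\eta > 0
\]
by the contradiction hypothesis $d_\delta(x,\Sing) \geq \bar c\, t^\eta$. In particular $x^\infty \notin \Sing$, giving (4).

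The construction is essentially bookkeeping once the two Claims are available; the only delicate point -- and the closest thing to an obstacle -- is that a single $c_0$ must simultaneously drive the heat-kernel iteration inside those Claims and deliver the separation estimate above, so one must check that committing to a small $c_0(\beta)$ at the final step does not conflict with the earlier choices. Since the Claims place no lower bound on $c_0$, there is no genuine conflict, and I do not anticipate any further difficulty.
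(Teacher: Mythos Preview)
Your proposal is correct and follows essentially the same approach as the paper: construct $(x^k)$ inductively from the two preceding Claims, use the geometric series $\sum_k (t/2^{k+1})^\beta$ to get Cauchy convergence, and then fix $c_0=c_0(\beta)$ small enough that the total displacement is a proper fraction of $d_\delta(x,\Sing)$, forcing $x^\infty\notin\Sing$. The paper phrases the last step as $d_\delta(x^\infty,\Sing)\geq (1-c_\beta c_0 t^\beta)d_\delta(x,\Sing)>0$ with $c_\beta=\sum_{k\geq1}2^{-k\beta}$, which is exactly your estimate with a slightly different normalization.
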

\begin{proof}[Proof of Claim]
Construct the sequence $(x^k)_{k=0}^{\infty}$ inductively by the previous two claims, so that the first three items are satisfied. The last item is satisfied due to the fact that
\begin{equation*}
\sum_{k=1}^{\infty}c_0d_{\delta}(x,\Sing)\left(\frac{t}{2^{k+1}}\right)^{\beta}
\end{equation*}
is a convergent series, and the tail of a convergent series tends to zero, so $(x^k)_{k=0}^{\infty}$ is be Cauchy, and hence (by completeness of $(\R^n,\delta)$) converges to some $x^{\infty}\in \R^n$. To see that $x^{\infty} \notin \Sing$, note that
\begin{equation*}
d_{\delta}(x^{\infty}, \Sing) \geq d_{\delta}(x, \Sing) - \sum_{k=1}^{\infty} c_0d_{\delta}(x, \Sing)(t/2^{k})^{\beta} = (1 - c_{\beta}c_0t^{\beta})d_{\delta}(x, \Sing) >0,
\end{equation*}
where
\begin{equation*}
c_\beta = \sum_{k=1}^{\infty} \left(\frac{1}{2^{\beta}}\right)^k,
\end{equation*}
provided that $c_0$ is chosen sufficiently smaller than $1$ so that (since $t\leq 1$) $c_{\beta}c_0t^{\beta} < 1$. Therefore, fix $0 < c_0 < 1$ so that this holds. This determines $\bar D = \bar D(n,\beta)$ and $\bar c = \bar c(n, \beta)$.
\end{proof}

Having proven this claim, note that
\begin{align*}
d(x^k, x^{\infty}) \leq \sum_{i=k+1}^{\infty}c_0d_{\delta}(x, \Sing)(t/2^{i})^{\beta} = c_0d_{\delta}(x, \Sing)(t/2^k)^{\beta}\sum_{i=1}^{\infty}(t/2^i)^\beta \leq c_\beta c_0d_{\delta}(x, \Sing)(t/2^k)^{\beta}.
\end{align*}

In particular,  by (\ref{eq:negativesum}) we have 
\begin{equation}\label{eq:negativescalarcurvlimit}
\begin{split}
\liminf_{t\searrow 0}\inf_{B(x^{\infty}, c_{\beta}c_0d_{\delta}(x, \Sing) (t/2^k)^{\beta})} R^{g_t} \leq -\frac{\bar C n}{2t}&\exp\left(-\frac{d_{\delta}^2(x, \Sing)t^{2\beta}}{\bar D t}\right) 
\\& + \sum_{i=1}^{\infty}\frac{C''(n)}{(t/2^{i-1})}\exp\left(-\frac{c_0^2d_{\delta}^2(x, \Sing)(t/2^i)^{2\beta}}{D't/2^i}\right) <\kappa_0,
\end{split}
\end{equation}
which contradicts the $\beta$-weak (\ref{eq:betaweak}) condition at $x^{\infty}\notin \Sing$ with the lower bound $\kappa_0$.
\end{proof}

\section{Step 2: improved lower bound due to source term}\label{sec:uniformLowerBound}
In this section we use Theorem \ref{thm:nonstandardHK} to improve the lower bound prove in Section \ref{sec:spatialLowerBound}. We first prove Theorem \ref{thm:discretesetsmoothing}:

\begin{proof}[Proof of Theorem \ref{thm:discretesetsmoothing}]
It is sufficient to assume that $(g_t)_{t\in (0,T)}$ is defined for some $T>1$ and to show that $R(g_1)\geq 0$, since otherwise the result for $t>0$ follows from parabolic rescaling. By (\ref{item:discretesetconvergence}) in the statement of the theorem, $(g_t)_{t\in (0,T)}$ satisfies the condition (\ref{eq:betaweak}) with the lower bound $0$ for any $\beta > 0$. Note that since $0 < c_0 < n/2$, we have
\begin{equation*}
c_0 < n/2 = \frac{n/2(1 - 2/n)}{1 - 2/n} = \frac{n/2 - 1}{1 - 2/n},
\end{equation*}
so
\begin{equation*}
\frac{1 + (1-2/n)c_0}{n} < 1/2
\end{equation*}
and hence there exists some $\beta >0$ such that 
\begin{equation}\label{eq:expinequalitydiscrete}
\frac{1 + (1-2/n)c_0}{n} < \frac{1 - 2\beta}{2}.
\end{equation}
Let $\eta \in ([1 + (1-2/n)c_0]/n, (1-2\beta)/2)$ so that we may apply Theorem \ref{thm:SingSetSpatialLowerbound}. Then, for sufficiently small $t\in (0,1)$, we have 
\begin{align*}
R(p,1)&\geq \int_{\R^n}R(t)u(t)dg_t
\\& \geq \int_{\T(\Sing, ct^\eta)} R(t)u_tdg_t + \int_{\R^n\setminus \T(\Sing, ct^\eta)} R(t)u_tdg_t 
\\& \geq -\frac{c_0}{t}Ct^{-(1-2/n)c_0}|\T(\Sing, ct^\eta)| + \int_{\R^n\setminus \T(\Sing, ct^\eta)} -\frac{\bar C}{t}\exp\left(-\frac{d_\delta^2(x, \Sing)}{\bar Dt^{1-2\beta}}\right)ct^{-(1-2/n)c_0}dg_t 
\\& \geq -Ct^{-1}t^{-(1-2/n)c_0}t^{n\eta} + \int_{\R^n\setminus \T(\Sing, ct^\eta)} -\frac{\bar C}{t}\exp\left(-\frac{d_\delta^2(x, \Sing)}{\bar Dt^{1-2\beta}}\right)ct^{-(1-2/n)c_0}dg_t 
\\& \geq I_t + II_t,
\end{align*}
for some $I_t$ and $II_t$, where Lemma \ref{lemma:RBounduIntegration} is used in the first step and Theorem \ref{thm:nonstandardHK} and Theorem \ref{thm:SingSetSpatialLowerbound} are used in the third step. 

We now show that we can pick the lower bounds $I_t$ and $II_t$ so that $I_t, II_t \xrightarrow[t\searrow 0]{} 0$. First of all, we have 
\begin{equation*}
-1 + n\eta -(1-2/n)c_0 > -1 + 1 + (1-2/n)c_0 - (1-2/n)c_0 = 0,
\end{equation*}
so
\begin{equation*}
I_t := -\frac{c}{t}t^{\eta n}t^{-(1 - 2/n)c_0} \xrightarrow[t\searrow 0]{} 0.
\end{equation*}

To see why there is a lower bound $II_t$ such that $II_t\xrightarrow[t\searrow 0]{} 0$, first observe that by Lemma \ref{lemma:CCSingularSet}, $\Sing$ has finite diameter. Fix some point $p\in \Sing$.  If $x\in \R^n$ such that $d(x,p)\geq 2\diam(\Sing)$, then $d(x,p) - \diam(\Sing)\geq \tfrac{1}{2}d(x,p)$. In particular, by the triangle inequality we have
\begin{equation*}
d(x,\Sing) \geq d(x,p) - \diam(\Sing) \geq \tfrac{1}{2}d(x,p),
\end{equation*}
so, adjusting the constants $C$ and $D$, we have
\begin{align*}
& \int_{\R^n\setminus B(p, 2\diam(\Sing))} \frac{C}{t}\exp\left(-\frac{d^2(x, \Sing)}{Dt^{1-2\beta}}\right)ct^{-(1-2/n)c_0}dg_t 
\\& \qquad \qquad + \int_{B(p, 2\diam(\Sing))\setminus \T(\Sing, ct^\eta)} \frac{C}{t}\exp\left(-\frac{d^2(x, \Sing)}{Dt^{1-2\beta}}\right)ct^{-(1-2/n)c_0}dg_t
\\& \leq \int_{\R^n\setminus B(p, 2\diam(\Sing))} \frac{C}{t}\exp\left(-\frac{d^2(x, p)}{Dt^{1-2\beta}}\right)ct^{-(1-2/n)c_0}dg_t 
\\& \qquad \qquad +  \int_{B(p, 2\diam(\Sing))\setminus \T(\Sing, ct^\eta)} \frac{C}{t}\exp\left(-\frac{d^2(x, \Sing)}{Dt^{1-2\beta}}\right)ct^{-(1-2/n)c_0}dg_t
\\& \leq Ct^{-1}t^{-(1-2/n)c_0}t^{\tfrac{n}{2}(1-2\beta)}\int_{\R^n}\exp\left(-\frac{|x|^2}{D}\right)d\delta
\\& \qquad \qquad + Ct^{-1}t^{-(1-2/n)c_0}\exp\left(-\frac{1}{t^{1-2\beta-2\eta}}\right)|B(p, 2\diam(\Sing))|
\\& =: -II_t.
\end{align*}
Since $\eta \in ([1 + (1-2/n)c_0]/n, (1-2\beta)/2)$ we have $1 - 2\beta - 2\eta >0$, and by (\ref{eq:expinequalitydiscrete}) we have $-1 -(1-2/n)c_0 + (n/2)(1-2\beta) > 0$ so $II_t \xrightarrow[t\searrow 0]{}0$.
\end{proof}

We now prove Theorem \ref{thm:betaNNSCLinftySet}, arguing in a similar manner to the proof of Theorem \ref{thm:discretesetsmoothing}
\begin{proof}[Proof of Theorem \ref{thm:betaNNSCLinftySet}]
As above, it is sufficient to show $R(g_1)\geq 0$, since the result for other $t>0$ follows from parabolic rescaling. Choose $\bar \beta$ sufficiently small depending on $\alpha$ so that
\begin{equation*}
\alpha\left(\frac{1-2\bar\beta}{2}\right) >1;
\end{equation*}
this is possible because $\alpha >2$. Let $c(n)>0$ be  a constant depending only on $n$ with the property that if, for $k = 0,1,2$ and all $t>0$, $||\nabla^k (g_t - \delta)||_{L^\infty(\R^n)} \leq \bar \varepsilon/t^{k/2}$, then $|R(g_t)|_x| \leq c(n)\bar \varepsilon/ t$ for all $x\in \R^n$. Pick $\bar \varepsilon > 0$ depending on $n$ so that $\bar \varepsilon < n/(2c(n))$. Then, since $n\geq 3$,
\begin{equation*}
1 - \frac{2}{n}\left[1 + (1-\tfrac{2}{n})c(n)\bar\varepsilon \right] > 1 - \frac{2}{n}\left[1 + (1-\tfrac{2}{n})c(n)\tfrac{n}{2c(n)} \right] = 0,
\end{equation*}
so we may reduce $\bar \beta >0$ as necessary depending on $\bar \varepsilon = \bar \varepsilon(n)$ so that
\begin{equation*}
\bar \beta \in \left( 0,  \frac{1}{2}\left[1 - \frac{2}{n}\left[1 + (1-\tfrac{2}{n})c(n)\bar\varepsilon \right]\right]  \right).
\end{equation*}
 Then
\begin{equation}\label{eq:exponentinequality}
\frac{1-2\bar\beta}{2} > \frac{1 + (1-\tfrac{2}{n})c(n)\bar\varepsilon}{n},
\end{equation}
and observe that for all $\varepsilon < \bar \varepsilon$ we also have
\begin{equation*}
\frac{1-2\bar\beta}{2} > \frac{1 + (1-\tfrac{2}{n})c(n)\varepsilon}{n}.
\end{equation*}
In particular, we may reduce $\bar \varepsilon$ as needed and still preserve (\ref{eq:exponentinequality}). Reduce $\bar\varepsilon$ further as needed depending on $n$ and $\alpha$ so that
\begin{equation}\label{eq:expinequalitysingsetsize}
\alpha\left(\frac{1-2\bar\beta}{2}\right) - 1 > (1 - \tfrac{2}{n})c(n)\bar \varepsilon.
\end{equation}

Now let $c_0 = c(n)\bar\varepsilon$, let $0 < \beta < \bar \beta$, let 
\begin{equation}\label{eq:etainterval}
\frac{1-2\bar\beta}{2} < \eta < \frac{1-2\beta}{2},
\end{equation} 
and let $\varepsilon < \bar \varepsilon$. By definition of $c_0$ and $c(n)$, $R(g_t) \geq -c_0/t$ and as above we use Theorem \ref{thm:nonstandardHK}, Lemma \ref{lemma:RBounduIntegration}, and Theorem \ref{thm:SingSetSpatialLowerbound} to find that for all sufficiently small $t>0$,
\begin{align*}
R(p,1)&\geq \int_{\R^n}R(t)u(t)dg_t
\\& \geq \int_{\T(\Sing, ct^\eta)} R(t)u_tdg_t + \int_{\R^n\setminus \T(\Sing, ct^\eta)} R(t)u_tdg_t 
\\& \geq -\frac{c_0}{t}Ct^{-(1-2/n)c_0}|\T(\Sing, ct^\eta)| + \int_{\R^n\setminus \T(\Sing, ct^\eta)} -\frac{\bar C}{t}\exp\left(-\frac{d_\delta^2(x, \Sing)}{\bar Dt^{1-2\beta}}\right)ct^{-(1-2/n)c_0}dg_t 
\\& \geq -Ct^{-1}t^{-(1-2/n)c_0}t^{\alpha\eta} + \int_{\R^n\setminus \T(\Sing, ct^\eta)} -\frac{\bar C}{t}\exp\left(-\frac{d_\delta^2(x, \Sing)}{\bar Dt^{1-2\beta}}\right)ct^{-(1-2/n)c_0}dg_t 
\\& \geq I_t + II_t,
\end{align*}
for some $I_t$ and $II_t$.

As before we show that we can pick the lower bounds $I_t$ and $II_t$ so that $I_t, II_t \xrightarrow[t\searrow 0]{} 0$. First, to see why $I_t\xrightarrow[t\searrow 0]{} 0$, note that by (\ref{eq:etainterval}) and (\ref{eq:expinequalitysingsetsize}) we have
\begin{align*}
\alpha\eta - (1-\frac{2}{n})c_0 - 1 & > \alpha\left(\frac{1-2\bar\beta}{2}\right) - (1-\frac{2}{n})c_0 - 1
\\&> (1 - \frac{2}{n})c(n)\bar\varepsilon - (1-\frac{2}{n})c_0 = 0,
\end{align*}
that so we may choose $I_t := -Ct^{\omega}$ for some $\omega > 0$ so that $I_t\xrightarrow[t\searrow 0]{} 0$.

As before, fix some $p\in \Sing$, and estimate as in the proof of Theorem \ref{thm:discretesetsmoothing}, adjusting $C$ and $D$, to find
\begin{align*}
\int_{\R^n\setminus \T(\Sing, ct^\eta)} & -\frac{\bar C}{t}\exp\left(-\frac{d_\delta^2(x, \Sing)}{\bar Dt^{1-2\beta}}\right)ct^{-(1-2/n)c_0}dg_t 
\\& \geq - Ct^{-1}t^{-(1-2/n)c_0}t^{\tfrac{n}{2}(1-2\beta)}\int_{\R^n}\exp\left(-\frac{|x|^2}{D}\right)d\delta
\\& \qquad \qquad - Ct^{-1}t^{-(1-2/n)c_0}\exp\left(-\frac{1}{t^{1-2\beta-2\eta}}\right)|B(p, 2\diam(\Sing))|.
\end{align*}

By (\ref{eq:etainterval}), 
\begin{equation*}
Ct^{-1}t^{-(1-2/n)c_0}\exp\left(-\frac{1}{t^{1-2\beta-2\eta}}\right)|B(p, 2\diam(\Sing))| \xrightarrow[t\searrow 0]{}0,
\end{equation*}
and by (\ref{eq:exponentinequality}), 
\begin{equation*}
Ct^{-1}t^{-(1-2/n)c_0}t^{\tfrac{n}{2}(1-2\beta)}\int_{\R^n}\exp\left(-\frac{|x|^2}{D}\right)d\delta \xrightarrow[t\searrow]{}0.
\end{equation*}
Therefore, we have
\begin{align*}
& \int_{\R^n\setminus \T(\Sing, ct^\eta)} -\frac{C}{t}\exp\left(-\frac{d^2(x, \Sing)}{Dt^{1-2\beta}}\right)ct^{-(1-2/n)c_0}dg_t
\\& \geq -Ct^{-1}t^{-(1-2/n)c_0}\exp\left(-\frac{1}{t^{1-2\beta-2\eta}}\right)|B(p, 2\diam(\Sing))| 
\\& \qquad \qquad - Ct^{-1}t^{-(1-2/n)c_0}t^{\tfrac{n}{2}(1-2\beta)}\int_{\R^n}\exp\left(-\frac{|x|^2}{D}\right)d\delta =: II_t \xrightarrow[t\searrow]{}0.
\end{align*}
This completes the proof.
\end{proof}
\begin{remark}
In contrast to the proof of Theorem {\ref{thm:discretesetsmoothing}}, one should be able to carry out the proof of Theorem {bestNNSCLinftySet} with $u$ replaced by the backwards heat kernel for the standard heat equation for the Ricci-DeTurck flow after reducing $\bar \varepsilon$, though of course the proof would then yield a smaller threshold $\bar \varepsilon$.
\end{remark}

Theorem \ref{thm:smoothNNSCLinftySet} now follows quickly:
\begin{proof}[Proof of Theorem \ref{thm:smoothNNSCLinftySet}]
Let $\bar \varepsilon$ be as in Theorem \ref{thm:betaNNSCLinftySet}, and reduce $\bar \varepsilon$ as needed so that it is smaller than the constant $\varepsilon$ in Theorem \ref{thm:KL+}. Then, if $|| g - \delta||_{L^\infty(\R^n)} < \bar \varepsilon$, there exists a smooth Ricci-DeTurck flow $(g_t)_{t>0}$ starting from $g$ in the sense of Theorem \ref{thm:KL+}, so that $g_t \xrightarrow[t\searrow 0]{C^\infty_{loc}(\R^n\setminus \Sing)} g$ and for $k = 0, 1, 2, \ldots$,
\begin{equation*}
|| \nabla^k (g_t - \delta)||_{C^0(\R^n)}\leq \frac{c(k)\bar \varepsilon}{t^{k/2}}.
\end{equation*}
Reduce $\bar \varepsilon$ further as needed so that $c(0)\bar\varepsilon$, $c(1)\bar\varepsilon$, and $c(2)\bar \varepsilon$ are smaller than the constant $\bar \varepsilon(\alpha, n)$ from Theorem \ref{thm:betaNNSCLinftySet}.

Since $R(g)|_x\geq 0$ for $x\notin \Sing$ and since 
\begin{equation*}
g_t \xrightarrow[t\searrow 0]{C^\infty_{loc}(\R^n\setminus \Sing)} g,
\end{equation*} 
$(g_t)_{t>0}$ satisfies the $\beta$-weak condition (\ref{eq:betaweak}) at $x$ for any $0 < \beta < 1/2$ and all $x\in \R^n\setminus \Sing$. Let $\beta < \bar \beta$, where $\bar \beta$ is the constant given by Theorem \ref{thm:betaNNSCLinftySet}. Then, by Theorem \ref{thm:betaNNSCLinftySet}, $R(g_t)\geq 0$ for all $t>0$. This completes the proof.
\end{proof}

\appendix
\section{Derivative estimates for Ricci-DeTurck flow from initial data with higher regularity}\label{appendix:convergence}

In addition to the norms $|| \cdot||_{X_T}$ introduced in Theorem \ref{theorem:KL}, in this section we will work with the following norms:
\begin{equation}
|| f||_{Y_T^0} = \sup_{x\in \R^n}\sup_{0 < r < \sqrt{T}}\left( r^{-n}|| f||_{L^1(B(x,r)\times (0, r^2))} + r^{4/(n+4)}|| f||_{L^{(n+4)/2}(B(x,r)\times (r^2/2, r^2))}\right)
\end{equation}
and
\begin{equation}
||f||_{Y_T^1} = \sup_{x\in \R^n}\sup_{0 < r < \sqrt{T}}\left(r^{-n/2}|| f||_{L^2(B(x,r)\times (0, r^2))} + r^{2/(n+4)}|| f||_{L^{n+4}(B(x,r)\times (r^2/2, r^2))}\right).
\end{equation}

We first show the following (cf. \cite[Lemma 4.2]{Appleton18}, \cite[Lemma 4.2]{Shi89}):
\begin{lemma}\label{lemma:smoothRDTFderivbounds}
Let $\varepsilon$ be as in Theorem \ref{theorem:KL}. There exists some positive $\varepsilon' = \varepsilon'(n) \leq \varepsilon$ such that the following is true: 

For some $k\geq 1$, let $g$ be a $C^k$ Riemannian metric on $\R^n$ such that $g$ is $(1+\varepsilon')$-bilipschitz to $\delta$.  For $t>0$ let $ g_t:= h_t + \delta$, where $(h_t)_{t>0}$ is the solution to the integral equation (\ref{eq:hevolution}) with initial data $g - \delta$ given by Theorem \ref{theorem:KL}. Suppose there exists $N >0$ such that $|| g - \delta||_{C^k(\R^n)} \leq N$. Then there exists $M = M(n, N) >0$ and $T = T(n, N) >0$ such that $|| g_t - \delta ||_{C^{k}(\R^n)} \leq M$ for all $t\in [0, T]$.
\end{lemma}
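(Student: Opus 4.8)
\emph{Proof strategy.} The plan is to run a Koch--Lamm type a priori estimate for the spatially differentiated integral equation, exploiting that the hypothesis $\|g-\delta\|_{C^k(\R^n)}\le N$ controls the linear (initial--data) part while the smallness of $\|g-\delta\|_{L^\infty}$ absorbs the genuinely dangerous nonlinear terms. As a preliminary reduction I would pass to smooth initial data: mollifying $g$ gives metrics $g^{(\epsilon)}$ with $g^{(\epsilon)}-\delta=(g-\delta)\ast\rho_\epsilon$, hence $\|g^{(\epsilon)}-\delta\|_{C^j(\R^n)}\le\|g-\delta\|_{C^j(\R^n)}$ for $j\le k$ (convolution with a probability kernel is a $C^j$-contraction) and $\|g^{(\epsilon)}-\delta\|_{L^\infty}<\varepsilon'$, and each $g^{(\epsilon)}$ is smooth and $(1+\varepsilon')$-bilipschitz to $\delta$. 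If the desired bound holds for each $g^{(\epsilon)}$ with constants $M=M(n,N)$, $T=T(n,N)$ independent of $\epsilon$, then it passes to $g$: by Theorem~\ref{theorem:KL} the corresponding flows $h^{(\epsilon)}$ converge to $h$ in $\|\cdot\|_{X_\infty}$ as $\epsilon\to 0$, hence, by interior parabolic regularity for the Ricci-DeTurck flow (using the uniform bilipschitz and smoothing bounds), $g^{(\epsilon)}_t\to g_t$ in $C^k_{\loc}$ for each fixed $t>0$, and the uniform bound is inherited. So I may assume $h:=g-\delta$ is a smooth solution of (\ref{eq:hevolution}) and (\ref{eq:Integraleq}) on $[0,\infty)$.

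Applying $\nabla^m$ for $1\le m\le k$ to (\ref{eq:Integraleq}) and commuting the Euclidean operators $\nabla,\nabla^{*},\Delta$ gives
\begin{equation*}
\nabla^m h(t)=e^{t\Delta}\nabla^m h_0+\int_0^t e^{(t-s)\Delta}\nabla^m Q^0[h](s)\,ds+\int_0^t e^{(t-s)\Delta}\nabla^{*}\big(\nabla^m Q^1[h](s)\big)\,ds .
\end{equation*}
For the linear term, $\|e^{t\Delta}\nabla^m h_0\|_{C^0(\R^n)}\le\|\nabla^m h_0\|_{C^0(\R^n)}\le N$ by $L^\infty$-contractivity of the heat semigroup, and the linear gradient estimate underlying the Koch--Lamm $X_T$-norm gives $\|\nabla\, e^{t\Delta}\nabla^m h_0\|_{Y_T^1}\lesssim\|\nabla^m h_0\|_{L^\infty}\le N$. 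This is the only place the $C^k$ hypothesis enters, and it is exactly enough: $C^k$ data means $\nabla^m h_0\in C^0$ for $m\le k$, which controls the ``next'' derivative $\nabla^{m+1}h=\nabla(\nabla^m h)$ of the evolution only in the cylinder norms $Y_T^0,Y_T^1$ of this Appendix, not pointwise. For the nonlinear terms, write $Q^0[h]=P(h)\ast\nabla h\ast\nabla h$ and $Q^1[h]=\widetilde P(h)\ast\nabla h$ with $P,\widetilde P$ smooth in $h$ and $\widetilde P(0)=0$; expanding $\nabla^m Q^0,\nabla^m Q^1$ by the Leibniz rule yields sums of products in which at most one factor has top order $\nabla^{m+1}h$, the coefficient factors and their derivatives being controlled by the bilipschitz bound and the inductively obtained bounds on $\nabla^j h$, $j\le m$.

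The estimate is then closed by induction on $m=1,\dots,k$ in the norm $\|h\|_{\mathcal Z_T}:=\max_{1\le m\le k}\big(\sup_{0<t\le T}\|\nabla^m h_t\|_{C^0(\R^n)}+\|\nabla^{m+1}h\|_{Y_T^1}\big)$. In the bilinear terms the single top-order factor $\nabla^{m+1}h$ is measured in $Y_T^1$ and always appears paired with a small low-order factor: in $\nabla^m Q^0$ the partner is $\nabla h$, with $\|\nabla h\|_{Y_T^1}\lesssim\varepsilon'$ by Theorem~\ref{theorem:KL}(1); in $\nabla^{*}\nabla^m Q^1$ the surviving derivative sits on the outer $\nabla^{*}$ and the partner is the coefficient $\widetilde P(h)$ with $\|\widetilde P(h)\|_{C^0}\lesssim\|h\|_{C^0}\lesssim\varepsilon'$. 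Using the Koch--Lamm mapping estimates for $F\mapsto\int_0^{\cdot}e^{(\cdot-s)\Delta}F\,ds$ and its divergence-form analogue between $Y_T^0,Y_T^1$ and $C^0$, each such term is bounded by $C(n)\varepsilon'\|h\|_{\mathcal Z_T}$, while all remaining terms (no factor above order $m$, hence controlled by the inductive hypothesis) are bounded by $C(n,N)T^{\gamma}$ for some $\gamma>0$. Choosing $\varepsilon'=\varepsilon'(n)$ so that $C(n)\varepsilon'\le\tfrac12$, and then $T=T(n,N)$ small, gives $\|h\|_{\mathcal Z_T}\le M(n,N)$, hence $\|g_t-\delta\|_{C^k(\R^n)}\le M$ for $t\in[0,T]$.

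I expect the main obstacle to be precisely the top-order factor $\nabla^{m+1}h$: with only $C^k$ (and not $C^{k+1}$, nor even $H^{k+1}_{\loc}$) initial data, $\nabla^{m+1}h$ genuinely blows up like $t^{-(m+1)/2}$ in $L^\infty$ by the smoothing estimates of Theorem~\ref{theorem:KL}, so a naive pointwise Duhamel bound $\int_0^t s^{-(m+1)/2}\,ds$ diverges for every $m\ge 1$. The whole point is to replace that pointwise control by the parabolic cylinder norms $Y_T^0,Y_T^1$, which reach down to $s=0$ and are exactly what the heat semigroup gains on $L^\infty$ data; verifying that the divergence structure of the $Q^1$-contribution together with the $\varepsilon'$-smallness of the paired low-order factor keeps all these terms absorbable, uniformly in $m\le k$ and with constants depending only on $(n,N)$, is the crux. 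The surrounding bookkeeping — the Leibniz expansion, the interpolation of intermediate derivatives, and the fact that no localization is needed since all norms here are global on $\R^n$ — is routine once this is in hand.
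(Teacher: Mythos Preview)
Your approach is the paper's: differentiate the integral equation, apply the Koch--Lamm mapping estimates in the $X_T/Y^0_T/Y^1_T$ norms, absorb the genuine top-order factor $\nabla^{m+1}h$ via the $\varepsilon'$-smallness of its partner ($\nabla h$ in $Q^0$, $h$ itself in $Q^1$), control the rest inductively, and close with $T$ small.

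One bookkeeping correction is worth making before you write out details. At level $m$ the Leibniz expansion produces a third category you have folded into ``remaining terms'': products containing a factor $\nabla^m h$ but no $\nabla^{m+1}h$, for instance $\nabla^2 h\ast\nabla^m h$ from $\nabla^{m}Q^0$ or $\nabla h\ast\nabla^m h$ from $\nabla^{m-1}Q^1$. These are \emph{not} controlled by the inductive hypothesis (which only reaches order $m-1$); they are linear in the current unknown with coefficient of size $C(n,M_{m-1})\,T^{\gamma}$, and must be absorbed into the left-hand side just like the $\varepsilon'$-terms. This absorption, not the fully lower-order terms, is the real reason $T$ must depend on $N$. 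The paper handles these explicitly (its terms $A_2,A_4,B_2$), choosing $T$ so that $M_{m-1}T<1/8$ and $M_{m-1}\sqrt{T}<1/8$. Your final sentence ``then $T=T(n,N)$ small'' is consistent with this, but your stated reason for it is not quite right.

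The mollification step is absent from the paper and unnecessary: since $g\in C^k$, one may place $\nabla^m$ on $h_0$ directly in the Duhamel formula for each $m\le k$, and $h_t$ is smooth for $t>0$ by Theorem~\ref{theorem:KL}, so all manipulations are justified without approximation.
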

\begin{proof}
Let $\varepsilon' = \varepsilon(n)$ and $C = C(n)$ be as in Theorem \ref{theorem:KL}. Reduce $\varepsilon'$ depending on $C$ (and hence on $n$) so that 
\begin{equation}\label{eq:Cepsilonsmall}
C\varepsilon' < 1/8.
\end{equation}
We will reduce $\varepsilon'$ further as necessary (depending on $n$, but not $k$) in the course of this proof. Let $g$ be a $C^1$ Riemannian metric on $\R^n$ satisfying the hypotheses of the lemma.  Since $g$ is $(1+\varepsilon')$-bilipschitz to $\delta$, by Theorem \ref{theorem:KL} exists a smooth solution $(h_t)_{t>0}$ to (\ref{eq:Integraleq}) with initial data $g-\delta$ such that $||h_t||_{X_\infty} \leq C\varepsilon'$. For $t>0$ let $g_t := h_t + \delta$. 

For $m, j\in \mathbb{N}$ let $P(m, j):= \{(i_1, \ldots, i_j)\in \mathbb{N}^j: i_1 + \cdots + i_j = m\}$. Observe that, for all $k\in \mathbb{N}$
\begin{equation}\label{eq:higherorderQ1}
\nabla^k\left( (g^{-1} - \delta^{-1})\star \nabla g\right) \big|_t = (g^{-1} - \delta^{-1})\star \nabla^{k+1}g + \sum_{j=2}^{k+1} (g^{-1})^j\star \left(\sum_{(i_1,\ldots, i_j)\in P(k+1, j)} \nabla^{i_1} g \star \cdots \star \nabla^{i_j} g\right) \bigg|_t
\end{equation}
and
\begin{equation}\label{eq:higherorderQ0}
\nabla^k\left( g^{-1}\star g^{-1}\star \nabla g \star \nabla g\right) \big|_t
= \sum_{j=2}^{k+2}(g^{-1})^j\star\left(\sum_{(i_1,\ldots, i_j)\in P(k+2, j)} \nabla^{i_1} g \star \cdots \star \nabla^{i_j}g\right)\bigg|_t
\end{equation}
where, for two tensors $A$ and $B$, we write $A \star B$ to denote a linear combination of products of the coefficients of $A$ and $B$, we write $A^j$ to denote $A\star\cdots \star A$, where the $\star$ operation is performed $j-1$ times.
 
 We prove the result by induction on $k$. We first show the result for $k = 1$. Fix some $T>0$, which we will specify in the course of the proof. By (\ref{eq:Integraleq}), (\ref{eq:higherorderQ1}), and (\ref{eq:higherorderQ0}) we have
\begin{equation}
\begin{split}
\nabla g_t (x) &= \int_{\R^n} \bar K(x,t;y,0)\nabla g(y)dy 
\\& + \int_0^t \int_{\R^n} \bar K(x,t;y,s)\left[\nabla^2g_s\star \nabla g_s + \nabla  g_s* \nabla g_s * \nabla g_s + \nabla^*( h_s * \nabla^2 g_s + \nabla g_s * \nabla g_s)\right]_y dyds,
\end{split}
\end{equation}
so
\begin{align*}
|| \nabla g_t||_{X_T} & \leq \left| \left|  \int_{\R^n} \Phi(x,t;y,0)\nabla g(y)dy  \right| \right|_{X_T}
\\& + \left| \left| \int_0^t \int_{\R^n} \Phi(x,t;y,s)\left[ \nabla^2 g_s * \nabla g_s\right]_y dyds \right| \right|_{X_T}
\\& +  \left| \left| \int_0^t \int_{\R^n} \Phi(x,t;y,s)\left[ \nabla g_s * \nabla g_s * \nabla g_s\right]_y dyds \right| \right|_{X_T}
\\& + \left| \left| \int_0^t \int_{\R^n} \Phi(x,t;y,s)\left[\nabla^*(\nabla g_s * \nabla g_s)\right]_y dyds \right| \right|_{X_T} 
\\& + \left| \left| \int_0^t \int_{\R^n} \Phi(x,t;y,s)\left[\nabla^*( (g_s - \delta) * \nabla^2 g_s)\right]_y dyds \right| \right|_{X_T}
\\& =: A + B_1 + B_2 + C_1 + C_2. 
\end{align*}

By \cite[Lemma 2.2]{KochLamm12}, we have
\begin{equation}
A \leq c(n)|| \nabla g||_{L^\infty(\R^n)}.
\end{equation}
By \cite[Lemma 4.2]{KochLamm12} and the definitions of $|| \cdot||_{Y_T^0}$, $|| \cdot||_{Y_T^1}$, and $|| \cdot||_{X_T}$ (cf. \cite[Lemma 4.1]{KochLamm12}) we have
\begin{equation}
B_1 \leq c ||  \nabla^2 g_t * \nabla g_t||_{Y^0_T} \leq c || \nabla g_t||_{X_T}||h_t||_{X_T} \leq cC\varepsilon'  || \nabla g_t||_{X_T},
\end{equation}
where $c = c(n)$ is the constant from \cite[Lemma 4.2]{KochLamm12}.
Similarly,
\begin{equation}
B_2 \leq c || \nabla g_t * \nabla g_t * \nabla g_t||_{Y^0_T} \leq c|| \nabla g_t||_{L^\infty(\R^n\times [0,T])}||h_t||_{X_t}|| h_t||_{X_T} \leq cC(\varepsilon ')^2|| \nabla g_t||_{X_T}.
\end{equation}
Similarly, we have
\begin{equation}
C_1 \leq c|| \nabla g_t * \nabla g_t ||_{Y^1_T} \leq c || \nabla g_t||_{L^\infty(\R^n\times [0,T])}|| \nabla g_t||_{Y^1_T} \leq c|| \nabla g_t||_{X_T}|| h_t||_{X_T} \leq cC\varepsilon' ||\nabla g_t||_{X_T}
\end{equation}
and
\begin{equation}
C_2  \leq c|| (g_t - \delta) * \nabla^2 g_t ||_{Y^1_T} \leq c|| g_t - \delta||_{L^\infty(\R^n\times [0,T])}|| \nabla^2 g_t ||_{Y^1_T} \leq cC\varepsilon' || \nabla g_t ||_{X_T}.
\end{equation}
Combining the previous equations, we find that
\begin{equation}
|| \nabla g_t ||_{X_T} \leq c || \nabla g||_{L^\infty(\R^n)} + cC(3\varepsilon' + (\varepsilon')^2) ||\nabla g_t||_{X_T}. 
\end{equation}
Therefore, reducing $\varepsilon'$ (depending on $n$) so that $cC(3\varepsilon' + (\varepsilon')^2) < 1$, the previous equation implies that
\begin{equation}
|| \nabla g_t||_{L^\infty(\R^n\times [0, T])} \leq || \nabla g_t||_{X_T} \leq \frac{c}{1 - cC(3\varepsilon' + (\varepsilon')^2)}|| \nabla g||_{C^0(\R^n)}.
\end{equation}
This proves the result for $k=1$, with $M = \max\{cN/(1 - cC(3\varepsilon' + (\varepsilon')^2)) , C\varepsilon'\}$.

Now suppose the statement of the lemma holds for $\varepsilon'$ as determined above, for some $k\geq 1$. Suppose that $g$ is a $C^{k+1}$ Riemannian metric on $\R^n$ such that $g$ is $(1+\varepsilon')$-bilipschitz to $\delta$ on $\R^n$, such that $|| g - \delta||_{C^{k+1}(\R^n)} \leq N$. Then $|| g - \delta||_{C^k(\R^n)} \leq N$, so by the inductive hypothesis there exists $M_k = M_k(n, N)$ and $T_k = T_k(n, N)$ such that $|| g_t - \delta ||_{C^k(\R^n\times [0,T_k])} \leq M_k$. We show that there exist $M_{k+1} = M_{k+1}(n, N)$ and $T_{k+1} = T_{k+1}(n, N)$ such that $|| g_t - \delta ||_{C^{k+1}(\R^n\times [0, T_{k+1}])} \leq M_{k+1}$. To see why, let $T \leq T_k$ and note that by (\ref{eq:higherorderQ0}), (\ref{eq:higherorderQ1}), (\ref{eq:hevolution}), and \cite[Lemma 4.2]{KochLamm12},  we have
\begin{align*}
|| \nabla^{k+1} g_t||_{X_T} & \leq || \nabla^{k+1} g_0||_{L^\infty(\R^n)} 
\\& + || \nabla^{k+2}g_t * \nabla g_t ||_{Y^0_T} + || \nabla^{k+1} g_t * \nabla ^2 g_t ||_{Y^0_T} + \sum_{\ell = 3}^{k} || \nabla^{\ell}* \nabla^{k+3-\ell} ||_{Y^0_T}
\\& + ||\nabla^{k+1} g_t * \nabla g_t * \nabla g_t ||_{Y^0_T} + \sum_{\ell = 2}^{k}\sum_{m = 1}^{k-\ell} || \nabla^{\ell}g_t * \nabla^{m}g_t * \nabla^{k+3-\ell - m} ||_{Y^0_T}
\\& + \sum_{j=4}^{k+3}\left(  \sum_{(i_1,\ldots, i_j)\in P(k+3, j)} \left| \left| \nabla^{i_1} g_t * \cdots * \nabla^{i_j}g_t  \right| \right|_{Y^0_T} \right)
\\& +   || h_t* \nabla^{k+2}g_t||_{Y^1_T} + || \nabla^{k+1} g_t * \nabla^{1}g_t ||_{Y^1_T} + \sum_{\ell = 2}^{k} ||  \nabla^\ell g_t * \nabla^{k+2-\ell}||_{Y^1_T}
\\& + \sum_{j=3}^{k+2} \left(\sum_{(i_1,\ldots, i_j)\in P(k+2, j)} | |\nabla^{i_1} g_t * \cdots * \nabla^{i_j} g_t | |_{Y^1_T}\right) 
\\& =: || \nabla^{k+1} g_0||_{L^\infty(\R^n)} 
\\& + A_1 + A_2 + A_3
\\& + A_4 + A_5
\\& + A_6
\\& + B_1 + B_2 + B_3
\\& + B_4.
\end{align*}

We now estimate each term separately. By the definitions of $|| \cdot||_{Y_T^0}$ and $|| \cdot||_{Y_T^1}$ and the inductive hypothesis, we have
\begin{equation}
A_1 \leq  || \nabla g_t ||_{Y_T^0}|| \nabla^{k+2} g_t||_{Y_T^0} \leq || h_t||_{X_T}|| \nabla^{k+1} g_t||_{X_T} \leq C\varepsilon' || \nabla^{k+1} g_t||_{X_T},
\end{equation}
\begin{equation}
\begin{split}
A_2 & \leq || \nabla^2 g_t * \nabla^{k+1}g_t ||_{Y^0_T} \leq || \nabla^2 g_t||_{L^\infty(\R^n\times [0,T])}|| \nabla^{k+1}g_t ||_{Y^0_T} 
\\& \leq M_k ||\nabla^{k+1}g_t||_{L^\infty(\R^n\times [0,T])} T \leq M_kT||\nabla^{k+1} g_t||_{X_T},
\end{split}
\end{equation}
\begin{equation}
A_3 \leq \sum_{\ell =3}^{k} || \nabla^\ell g_t||_{L^\infty(\R^n\times [0,T])} || \nabla^{k+3-\ell} g_t||_{L^\infty(\R^n\times [0,T])}|| 1||_{Y^0_T} \leq (k-2)M_k^2 T,
\end{equation}
\begin{equation}
A_4 \leq || \nabla g_t||^2_{C^0(\R^n\times[0,T])}|| \nabla^{k+1} g_t||_{Y^0_T} \leq M_k^2 ||\nabla^{k+1}g_t||_{C^0(\R^n\times[0,T])}|| 1||_{Y^0_T} \leq M_k^2T|| \nabla^{k+1}g_t||_{X_T}
\end{equation}
\begin{equation}
\begin{split}
A_5 &\leq \sum_{\ell = 2}^{k}\sum_{m = 1}^{k-\ell} || \nabla^{\ell}g_t||_{C^0(\R^n\times [0,T])} || \nabla^{m}g_t||_{C^0(\R^n\times [0,T])} ||\nabla^{k+3-\ell - m} ||_{C^0(\R^n\times[0,T])}|| 1||_{Y^0_T} 
\\& \leq (k-1)^2M_k^3T
\end{split}
\end{equation}
and
\begin{equation}
\begin{split}
A_6 & \leq \sum_{j=4}^{k+3}\left(  \sum_{(i_1,\ldots, i_j)\in P(k+3, j)} || \nabla^{i_1} g_t||_{C^0(\R^n\times[0,T])} \cdots ||\nabla^{i_j}g_t||_{C^0(\R^n\times[0,T])} || 1||_{Y^0_T} \right)
\\& \leq c(k)M_k^{k+3}T.
\end{split}
\end{equation}
Similarly,
\begin{equation}
B_1 \leq || g_t - \delta||_{L^\infty(\R^n\times [0,T])} || \nabla^{k+2} g_t||_{Y_T^1} \leq || g_t - \delta||_{X_T}|| \nabla^{k+1}g_t||_{X_T} \leq C\varepsilon ||\nabla^{k+1}g_t||_{X_T},
\end{equation}
\begin{equation}
B_2 \leq || \nabla g_t||_{L^\infty(\R^n\times [0,T])}|| \nabla^{k+1} g_t||_{L^\infty(\R^n\times [0,T])}\sqrt{T} \leq M_k\sqrt{T}|| \nabla^{k+1} g_t||_{X_T},
\end{equation}
\begin{equation}
B_3 \leq \sum_{\ell=2}^{k} || \nabla^\ell g_t||_{L^\infty(\R^n\times[0,T])}|| \nabla^{k+2 - \ell} g_t ||_{L^\infty(\R^n\times [0,T])} ||1||_{Y^1_T} \leq (k-1)M_k^2\sqrt{T},
\end{equation}
and
\begin{equation}
\begin{split}
B_4 &\leq  \sum_{j=3}^{k+2} \left(\sum_{(i_1,\ldots, i_j)\in P(k+2, j)} ||\nabla^{i_1} g_t||_{C^0(\R^n\times[0,T])} \cdots||\nabla^{i_j} g_t||_{C^0(\R^n\times[0,T])} || 1 ||_{Y^1_T}\right)
\\& \leq c(k)kM_k^{k+2}\sqrt{T}.
\end{split}
\end{equation}

Combining the previous equations, we find
\begin{equation}
\begin{split}
|| \nabla^{k+1} g_t||_{X_T} & \leq  || \nabla^{k+1} g_0||_{L^\infty(\R^n)} 
\\& + C\varepsilon' || \nabla^{k+1} g_t||_{X_T} + M_kT||\nabla^{k+1} g_t||_{X_T} + (k-2)M_k^2 T + (k-1)^2M_k^3T + c(k)M_k^{k+3}T
\\& + C\varepsilon' ||\nabla^{k+1}g_t||_{X_T} + M_k\sqrt{T}|| \nabla^{k+1} g_t||_{X} + (k-1)M_k^2\sqrt{T} + c(k)kM_k^{k+2}\sqrt{T}.
\end{split}
\end{equation}
Choose $T$ sufficiently small depending on $M_k$ so that $M_kT < 1/8$ and $M_k\sqrt{T} < 1/8$. Then, after rearranging the previous equation, we have, by (\ref{eq:Cepsilonsmall})
\begin{equation}
\begin{split}
|| \nabla^{k+1} g_t||_{L^\infty(\R^n\times [0,T])} & \leq || \nabla^{k+1} g_t||_{X_T} 
\\& \leq 2 \big[|| \nabla^{k+1} g||_{L^\infty(\R^n)} + (k-2)M_k^2 T + (k-1)^2M_k^3T 
\\& + c(k)M_k^{k+3}T + (k-1)M_k^2\sqrt{T} + c(k)kM_k^{k+2}\sqrt{T}\big].
\end{split}
\end{equation}
This proves the statement of the lemma for $k+1$, with $T_{k+1} := T$ and $M_{k+1} := 2N + 2(k-2)M_k^2 T_{k+1} + 2(k-1)^2M_k^3T_{k+1}  + 2c(k)M_k^{k+3}T_{k+1} + 2(k-1)M_k^2\sqrt{T_{k+1}} + 2c(k)kM_k^{k+2}\sqrt{T_{k+1}}$.
\end{proof}

\begin{proof}[Proof of Lemma \ref{lemma:SmoothConvergenceae}]
Let $\varepsilon'' = \varepsilon '$, where $\varepsilon '$ is as in Lemma \ref{lemma:smoothRDTFderivbounds}. We will reduce $\varepsilon''$ as needed in the course of the proof. Let $g$ and $U$ be as in the statement of the lemma. Let $K = \overline{U}$. Suppose that there exist no such $C_k$ and $T_k$, so that for all $i\in \mathbb{N}$ there exist $x_i \in K$ and $t_i \in [0, \tfrac{1}{i^{2/k}}]$ such that $|\nabla^k g_{t_i}|(x_i) > i$. To simplify notation, let $c_i = i^{1/k}$. Let $\hat g^i(x) := g(x/c_i + x_i)$ and let $\hat g^i(x,t) := g(x/c_i+ x_i, t/c_i^2)$. Then $g^i$ is $(1+\varepsilon'')$-bilipschitz to $\delta$ on $\R^n$. Also, by Remark \ref{rmk:parabolicscaling}, Remark \ref{rmk:Xnormscalinginvariance}, and the uniqueness statement in Theorem \ref{theorem:KL}, $g^i_t - \delta$ is the solution to the integral equation (\ref{eq:Integraleq}) with initial data $g^i-\delta$ whose existence is guaranteed by Theorem \ref{theorem:KL}. Also by scaling, $|\nabla^k g^i_{c_i^2t_i}|(0) > 1$, where here $0$ denotes the origin. By the derivative estimates in Theorem \ref{theorem:KL} and Arzel\'a -- Ascoli, there exists a subsequence $\{g_t^{i_m}\}$ of $\{g^i_t\}$ that converges in $C^\infty_\loc(\R^n\times (0,\infty))$ to some Ricci-DeTurck flow $(g^\infty_t)_{t>0}$.

Let $r = \dist(K, \Sing) > 0$. Let $V = \{x: \dist(x, K) < r/2\}$ and let $W = \{x: \dist(x, K) < 3r/4\}$. Note that $\overline{V}$ is compact and $W$ is an open set containing $\overline{V}$ so there exists a smooth cutoff function $\chi: \R^n\to [0,1]$ identically equal to $1$ on $\overline{V}$ and identically equal to $0$ on $\R^n\setminus W$  such that for all $j\in \mathbb{N}$, $|| \nabla^j \chi ||_{C^0(\R^n)} \leq c_j(r) = c_j(\dist(K, \Sing))$. Also, $\overline{W}$ is compact, and $\dist(\overline W, \Sing) \geq \dist(K, \Sing) - \dist(\overline{W}, K) \geq r - 3r/4 = r/4 >0$. Let $g^{V} := \chi g + (1-\chi)\delta$. Then $g^V$ is smooth with uniformly bounded derivatives (where the upper bound for $|\nabla^k g^V|$ depends on $\dist(K, \Sing)$ and $|| g||_{C^k(\overline{W})}$), and  $(1+\varepsilon'')$-bilipschitz to $\delta$. By Theorem \ref{theorem:KL} there is a solution $(h_t^V)_{t>0}$ to the integral equation (\ref{eq:Integraleq}) with initial data $g^V - \delta$, and by Lemma \ref{lemma:smoothRDTFderivbounds}, for all $j \in \mathbb{N}$, there exist $M_j = M_j(n, || g - \delta||_{C^j(V)}, \dist(V, \Sing))$ and $T_j = T_j(n, || g - \delta||_{C^j(V)}, \dist(V, \Sing))$ such that for all $t\in [0, T_j]$, 
\begin{equation}\label{eq:compactapproximatorbound}
|| g^V_t - \delta ||_{C^j(\R^n)} \leq M_j,
\end{equation} 
where $g^V_t := h^V_t + \delta$. Now, as before, consider $\hat g^i(x) := g^V(x/c_i + x_i)$ and the corresponding Ricci-DeTurck flow $\hat g^i(x,t) := g^V(x/c_i + x_i, t/c_i^2)$. Again by the derivative estimates in Theorem \ref{theorem:KL}, there exists a subsequence $\{\hat g^{i_{m_\ell}}_t\}$ of $\{\hat g^{i_m}_t\}$ that converges in $C^\infty_\loc(\R^n\times (0,\infty))$ to some Ricci-DeTurck flow $(\hat g^\infty_t)_{t>0}$. Relabel the sequences so that $\{g^i_t\}$ denotes $\{g^{i_{m_\ell}}_t\}$ and $\{\hat g^i_t\}$ denotes $\{\hat g^{i_{m_\ell}}_t\}$. 

We now show that $\hat g^\infty_t = g^\infty_t$ for all $t>0$. First, let $V' = \{x : \dist(x, K)< r/4\}$, and let $\varphi: \R^n\to [0,1]$ be a smooth cutoff function identically equal to $1$ on $\overline{V'}$ and identically equal to $0$ outside of $V$, and such that $|| \varphi||_{C^2(\R^n)} \leq c(r) = c(\dist(K, \Sing))$. Note that since $\chi \equiv 1$ on $V$, we have $\varphi(g - \hat g) \equiv 0$. 

Consider $\varphi^i: \R^n\to \R$ defined by $\varphi^i(x) = \varphi(x/c_i + x_i)$ so that $\varphi^i \equiv 1$ on $B(0, c_ir/4)$ (since $B(x_i, r/4)\subset V'$),  $|| \nabla \varphi^i||_{C^0(\R^n)}\leq c(\dist(K,\Sing))/c_i$, $|| \nabla^2 \varphi^i||_{C^0(\R^n)}\leq c(\dist(K,\Sing))/c_i^2$, and $\varphi^i(g^i - \hat g^i) \equiv 0$. We will show that
\begin{equation}
||\varphi^i(g_t^i - \hat g_t^i) ||_{X_T} \xrightarrow[i\to\infty]{} 0.
\end{equation}
To see why this is true, we argue as in the proof of \cite[Theorem 4.1]{PBG19}: first, note that 
\begin{equation}
\begin{split}
(\partial_t - \Delta)[\varphi^i(g_t^i - \hat g_t^i)] &= \varphi^i[\nabla g_t^i*\nabla g_t^i - \nabla \hat g_t^i*\nabla \hat g_t^i] + \varphi^i \nabla^*[g_t^i * \nabla g_t^i - \hat g_t^i * \nabla \hat g_t^i]
\\&  - (\Delta \varphi^i)(g_t^i - \hat g_t^i) - (\nabla \varphi^i)*\nabla (g_t^i - \hat g_t^i)
\\&= \varphi^i[\nabla g_t^i*\nabla g_t^i - \nabla \hat g_t^i*\nabla \hat g_t^i] +  \nabla^*[\varphi^i(g_t^i * \nabla g_t^i - \hat g_t^i * \nabla \hat g_t^i)]
\\& - (\nabla \varphi^i)* (g_t^i * \nabla g_t^i - \hat g_t^i * \nabla \hat g_t^i) - (\Delta \varphi^i)(g_t^i - \hat g_t^i) - (\nabla \varphi^i)*\nabla (g_t^i - \hat g_t^i).
\end{split}
\end{equation}
Fix some $T>0$. Then, by \cite[Lemma 4.2]{KochLamm12} and arguing similarly to the proofs of \cite[Lemma 4.1]{KochLamm12} and \cite[Theorem 4.1]{PBG19} we have
\begin{align*}
|| \varphi^i(g_t^i - \hat g_t^i) ||_{X_T} & \leq c(n)||\varphi^i(g^i - \hat g^i) ||_{L^\infty(\R^n)}
\\& +C(|| g_t^i - \delta ||_{X_T} + || \hat g_t^i - \delta ||_{X_T})|| \varphi^i(g_t^i - \hat g_t^i)  ||_{X_T}
\\& + c(n)|| \nabla \varphi^i||_{C^0(\R^n)}(|| g_t^i - \delta||_{X_T}^2 + || \hat g^i_t - \delta||_{X_T}^2) 
\\& + c(n)||\Delta \varphi^i ||_{C^0(\R^n)}(|| g_t^i - \delta||_{X_T} + || \hat g^i_t - \delta||_{X_T}) T
\\& + c(n)||\nabla \varphi^i ||_{C^0(\R^n)}(|| g_t^i - \delta||_{X_T} + || \hat g^i_t - \delta||_{X_T}) \sqrt{T}
\\& \leq 0 + 2C\varepsilon''|| \varphi^i(g_t^i - \hat g_t^i)  ||_{X_T} 
\\& + 2c(n)C(\varepsilon'' + (\varepsilon'')^2)\left (\frac{c(\dist(K,\Sing))\sqrt{T}}{c_i} + \frac{c(\dist(K,\Sing))T}{c_i^2} \right),
\end{align*}
where $C$ is the product of constants from \cite[Lemmata 4.1 and 4.2]{KochLamm12}. Therefore, if $\varepsilon''$ is sufficiently small depending on $C$ (and hence on $n$), then we may rearrange this to find that for all $0 < t < T$,
\begin{equation}
\begin{split}
|| g_t^i - \hat g_t^i ||_{C^0(B(0, c_i r/4))} &\leq || \varphi^i(g_t^i - \hat g_t^i) ||_{X_T} 
\\& \leq \frac{2C(\varepsilon'' + (\varepsilon'')^2)}{1 - 2C\varepsilon''}\left (\frac{c(\dist(V,\Sing))}{c_i} + \frac{c(\dist(V,\Sing))}{c_i^2} \right) \xrightarrow[i\to \infty]{} 0,
\end{split}
\end{equation}
and hence 
\begin{equation}\label{eq:RDTFsamelimits}
g_t^\infty \equiv \hat g_t^\infty
\end{equation} 
for all $t > 0$. 

By scaling and the estimate (\ref{eq:compactapproximatorbound}),
\begin{equation*}
|| \nabla^j \hat g^i_t ||_{C^0(\R^n \times [0, c_i^2T_j])} \leq \frac{M_j}{c_i^j}
\end{equation*}
for all $i,j\in \mathbb{N}$ so
\begin{equation*}
|| \nabla^j \hat g^\infty_t ||_{C^0(\R^n \times [0, \infty))} = 0
\end{equation*}
for all $j\in \mathbb{N}$. On the other hand, the $0 < c_i^2t_i \leq c_i^2(1/c_i^2)$ subconverge to some $t_\infty\in [0, 1]$ so after passing to the corresponding subsequences of $\{g^i_t\}$ and $\{\hat g^i_t\}$ we have
\begin{align*}
0 &= |\nabla^k \hat g^\infty_t|(0) = |\nabla^k g^{\infty}_{t_\infty}|(0) \geq |\nabla^k g^i_{c_i^2t_i}|(0) - | \nabla^k g^i_{c_i^2t_i} - \nabla^k g^i_{t_\infty} |(0) - |\nabla^k g^i_{t_\infty} - \nabla^k g^\infty_{t_\infty}|(0)
\\& = |\nabla^k g^i_{c_i^2t_i}|(0) - | \nabla^k \hat g^i_{c_i^2t_i} - \nabla^k \hat g^i_{t_\infty} |(0) - |\nabla^k g^i_{t_\infty} - \nabla^k g^\infty_{t_\infty}|(0)
\\& \geq 1 - \frac{c(M_{k+2})}{c_i^{k+2}}|c_i^2t_i - t_\infty| - |\nabla^k g^i_{t_\infty} - \nabla^k g^\infty_{t_\infty}|(0) \xrightarrow[i\to \infty]{} 1,
\end{align*}
where again we have used scaling, (\ref{eq:compactapproximatorbound}), (\ref{eq:RDTFsamelimits}), and in the last inequality we have used the evolution equation for $\nabla^k \hat g_t^i$. This contradiction proves the lemma.
\end{proof}

\bibliographystyle{plain}
\bibliography{NNSCLinftyRDTFbiblio}

\end{document}